\documentclass[10pt]{amsart}

\usepackage{comment}
\usepackage[official]{eurosym}
\usepackage[greek,english]{babel}
\usepackage{bbm}
\usepackage[utf8]{inputenc}
\usepackage[T1]{fontenc}
\usepackage{pdfpages}
\usepackage{tensor}
\usepackage{enumerate}
\usepackage{relsize}
\usepackage[normalem]{ulem}


\usepackage[stable]{footmisc}
\usepackage{mathtools}
\usepackage[mathscr]{eucal}
\usepackage[a4paper, twoside=false, vmargin={2cm,3cm}, includehead]{geometry}
\usepackage{pgfgantt}
\usepackage{graphicx}
\usepackage{xcolor}
\ganttset{group/.append style={orange},
	milestone/.append style={red},
	progress label node anchor/.append style={text=red}}
\theoremstyle{plain}
\newtheorem{theorem}{Theorem}[section]
\newtheorem{lemma}[theorem]{Lemma}
\newtheorem{proposition}[theorem]{Proposition}
\newtheorem{question}{Question}
\newtheorem*{question*}{Question}
\newtheorem{conjecture}{Conjecture}
\newtheorem*{conjecture*}{Conjecture}
\newtheorem{remark}{Remark}
\newtheorem{corollary}[theorem]{Corollary}

\newtheorem{definition}[theorem]{Definition}
\newtheorem{claim}{Claim}
\newtheorem*{Graal1*}{Special Case of Problem~\ref{Graal1}}
\newtheorem*{Graal2*}{Special Case of Problem~\ref{Graal2}}

\newtheorem*{theorem*}{Theorem}

\newenvironment{customthm}[1]
{\innercustomthm}
{\endinnercustomthm}
\usepackage{amssymb}

\theoremstyle{remark}
\newtheorem*{remark*}{{\bf Remark}}
\newtheorem*{remarks*}{{\bf Remarks}}
\newtheorem*{comment*}{{\bf Comment}}

\usepackage{cancel}

\usepackage{amsmath}





\newcommand{\Oh}{{\rm O}}
\newcommand{\oh}{{\rm o}}

\newcommand{\wt}{\widetilde}

\newcommand{\norm}[1]{\left\Vert #1\right\Vert}
\newcommand{\N}{\mathbb{N}}
\newcommand{\Q}{\mathbb{Q}}
\newcommand{\R}{\mathbb{R}}
\newcommand{\T}{\mathbb{T}}
\newcommand{\Z}{\mathbb{Z}}
\newcommand{\A}{\mathcal{A}}
\newcommand{\B}{\mathcal{B}}
\newcommand{\C}{\mathbb{C}}

\newcommand{\D}{\mathbb{D}}

\newcommand{\M}{\mathcal{M}}

\newcommand{\X}{\mathcal{X}}

\newcommand{\m}{\mu}

\newcommand{\Mfg}{\M^\textup{fg}}
\newcommand{\Mfgs}{\Mfg_\textup{s}}
\newcommand{\Mfgp}{\Mfg_\textup{p}}

\newcommand{\1}{\mathbbm{1}}

\renewcommand{\Re}{\mathrm{Re}}

\renewcommand{\P}{\mathbb{P}}

\newcommand{\e}{\varepsilon}

\newcommand{\logE}{\mathbb{E}^{\log}}
\newcommand{\cesE}{\mathbb{E}}

\def \colon{{:}\;}
\pagenumbering{arabic}

\setcounter{tocdepth}{1}

\usepackage{fancyhdr}
\pagestyle{plain}

\usepackage{theoremref}

\usepackage{hyperref}


\theoremstyle{definition}
\newtheorem*{counterexample*}{Counterexample}

\usepackage[capitalise]{cleveref}

\title{On multiplicative recurrence along linear patterns}
\date{}
\author{DIMITRIOS CHARAMARAS, ANDREAS MOUNTAKIS AND KONSTANTINOS TSINAS}

\subjclass{Primary: 11N37; Secondary: 37A44.}
\keywords{multiplicative recurrence, multiplicative functions, binary correlations, finitely generated systems}

\begin{document}

\vspace*{-0.015cm}
\begin{abstract} 
In \cite{Moreira&friends}, Donoso, Le, Moreira and Sun
study sets of recurrence for actions of the multiplicative semigroup
$(\N, \times)$ and provide some sufficient conditions for sets of the form 
$S=\{(an+b)/(cn+d) \colon  n \in \N \} $
to be sets of recurrence for such actions. A necessary condition for $S$ to be a set of multiplicative recurrence is that
for every completely multiplicative function $f$ taking values on the unit 
circle, we have that $\liminf_{n \to \infty} |f(an+b)-f(cn+d)|=0.$ In this article, we fully characterize the integer quadruples $(a,b,c,d)$ which satisfy the latter property. Our result generalizes 
a result of Klurman and Mangerel from \cite{Klu-Man} concerning the pair $(n,n+1)$, as well as some results from \cite{Moreira&friends}. In addition, 
we prove that, under the same conditions on $(a,b,c,d)$, the set $S$ is a set of 
recurrence for finitely generated actions of $(\N, \times)$.
\end{abstract}

\maketitle
\tableofcontents

\section{Introduction}
\label{section_intro}

Since Furstenberg's multiple recurrence theorem \cite{Furstenberg-original}, which yielded an ergodic proof of Szemer\'edi's theorem \cite{Szemeredi_original}, significant progress has been made in understanding recurrence phenomena in dynamical systems generated by a single transformation, that is, for actions of $(\Z,+)$ (see for instance \cite{Bergelson-Leibman-polynomial-VDW,Furstenberg-book,K-MF_vdC, sarkozy}). Several results have also been established for actions of $\Z^d$ \cite{Bergelson_Zm, Bergelson-Lesinge_Zd, FK_commuting}, and other amenable groups \cite{Bergelson_McCutcheon_1}. We refer the reader to \cite{bergelson_survey} for further relevant results and references.

Recurrence involving multiplicative actions of the semigroup $(\N,\times)$ was first studied in \cite{Bergelson_multiplicative}, but the results in this direction are sparse and several interesting questions remain unanswered. Recently, the connection of this topic to major open conjectures concerning the partition regularity of quadratic equations \cite{Fra-Host-structure-multiplicative, fra-klu-mor-2, Fra-Klu-Mor} has sparked renewed interest.

In \cite{Moreira&friends}, Donoso, Le, Moreira and Sun
study sets of recurrence,
in both measurable and topological settings, for actions of 
$(\N, \times)$ and $(\Q_{>0}, \times)$. In particular, they show \cite[Theorem 1.6]{Moreira&friends} that sets of the form 
$S=\{(an+b)/(cn+d) \colon n\in\N\}$,
are sets of topological multiplicative recurrence if $a=c$, and $a\mid b$ or $a\mid d$. However, there are sets, such as
$\{(6n+3)/(6n+2) \colon  n \in \N \}$, for which they do not provide a conclusive answer, and no counterexample is currently known.
This led them to ask the following:

\begin{question}{\cite[Question 7.2]{Moreira&friends}}\label{question DLMS}
     For $a\in \N$, $b,d \in \Z$, is it true that $\{(an+b)/(an+d) \colon
     n\in \N\}$ is a set of topological multiplicative recurrence if and 
     only if $a \mid b$ or $a\mid d$?
\end{question}

Recall that a function $f:\N \to \C$ is called multiplicative if 
$f(mn)=f(m)f(n)$ for all $m,n \in \N$ with $(m,n)=1$, and it is called 
completely multiplicative if the previous identity
holds for all $m,n \in \N$.
It can be shown (using a variant of Lemma \ref{L: recurrence implies Bohr recurrence} below for topological recurrence) that for $a,c \in \N$ and $b,d \in \Z$, if the set $S$ defined above is a set of topological multiplicative recurrence, then for any 
completely multiplicative function $f\colon \N \to \mathbb{S}^1$, 
\begin{equation}\label{eq res 1}
    \liminf_{n\to \infty} |f(an+b)-f(cn+d)|=0.
\end{equation}

In an attempt to shed some light on Question \ref{question DLMS},
we give necessary and sufficient conditions on $a,b,c,d$ so that 
\eqref{eq res 1} holds for every completely multiplicative function 
$f\colon \N \to \mathbb{S}^1$. Note that factoring out the greatest common 
divisor $u=(a,b,c,d)$ in 
\eqref{eq res 1}, we may assume without loss of generality that 
$(a,b,c,d)=1$. Intuitively, if we account for some obvious divisibility obstructions,
the multiplicative structure of the numbers $an+b$ and 
$cn+d$ should be somewhat independent of one another. Thus, for a 
completely multiplicative function $f\colon \N \to \mathbb{S}^1$, one should expect \eqref{eq res 1} to hold unless $f$ interacts with the arithmetic (and archimedean) properties of $a,b,c,d$ in a specific way. In fact, we will see that the main counterexamples to \eqref{eq res 1} come from a class of multiplicative functions which are essentially of the form $\chi(n)n^{it}$ for some Dirichlet character $\chi$ and a real number $t$. 

\subsection{Results on multiplicative functions}
Let us denote by $\M$ the set
\begin{equation}\label{E: definition of space of multiplicative functions}
    \M=\{f\colon \N\to \mathbb{S}^1\colon f\  \text{ is completely multiplicative}\},
\end{equation}
where $\mathbb{S}^1$ denotes the set of complex numbers with modulus $1$.
The first theorem we prove in this paper is the following:

\begin{theorem}\label{T: optimal conditions for Bohr recurrence}
Let $a,c \in \N$, $b,d \in \Z$ with $(a,b,c,d)=1$. Then, the following are equivalent:
\begin{enumerate}
\item For every $f\in \M$ and $\e>0$, the set 
\begin{equation}\label{set of ret times}
    \A(f,\e):=\{n\in \N \colon |f(an+b)-f(cn+d)| < \e\}
\end{equation}
has positive upper logarithmic density.\footnote{See \cref{section_background} for definition.}
\item For every $f\in \M$,
\begin{equation*}
    \liminf_{n\to \infty} | f(an+b) - f(cn+d)| =0.
\end{equation*}
\item  $a=c$, and $b=d$ or $a\mid bd$.
\end{enumerate}
\end{theorem}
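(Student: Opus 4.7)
The implication (1)$\Rightarrow$(2) is immediate: positive upper logarithmic density of $\A(f,\e)$ in particular forces $\A(f,\e)$ to be non-empty for every $\e>0$, so $\liminf_{n\to\infty}|f(an+b)-f(cn+d)|=0$. The remaining work is to prove the contrapositive of (2)$\Rightarrow$(3) by exhibiting explicit counterexamples, and to prove (3)$\Rightarrow$(1) by reducing to a binary correlation estimate for $f\in\M$.

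For the contrapositive of (2)$\Rightarrow$(3), assume (3) fails. If $a\neq c$, take the Archimedean character $f(n)=n^{it}$; then
\[
|f(an+b)-f(cn+d)|=\Big|1-\Big(\tfrac{cn+d}{an+b}\Big)^{it}\Big|\longrightarrow |1-(c/a)^{it}|,
\]
and the right side is positive for a suitable real $t$ since $a\neq c$. If $a=c$, $b\neq d$, and $a\nmid bd$, then there is a prime $p$ with $v_p(a)>v_p(b)+v_p(d)$. I would lift a carefully chosen Dirichlet character $\chi$ modulo a suitable power of $p$ (possibly combined with an Archimedean twist $n^{it}$) to a unimodular completely multiplicative function, by assigning arbitrary values on $\mathbb{S}^1$ to the primes of the conductor. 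A direct computation on the appropriate residue classes shows that $f(an+b)$ and $f(an+d)$ fall on distinct $\chi$-values separated by a uniform gap; the inequality $v_p(a)>v_p(b)+v_p(d)$ is exactly what prevents the $p$-adic valuations of the two linear forms from forcing them into the same fibre of $\chi$.

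For (3)$\Rightarrow$(1), the case $b=d$ is trivial. When $a=c$ and $a\mid bd$, the prime-by-prime condition $v_p(a)\leq v_p(b)+v_p(d)$ yields a factorization $a=a_1 a_2$ with $a_1\mid b$ and $a_2\mid d$. Writing $b'=b/a_1$ and $d'=d/a_2$, complete multiplicativity gives
\[
f(an+b)-f(an+d)=f(a_1)\,f(a_2 n+b')-f(a_2)\,f(a_1 n+d'),
\]
so the task reduces to showing that $|f(a_1)f(a_2 n+b')-f(a_2)f(a_1 n+d')|<\e$ on a set of $n$ of positive upper logarithmic density. The plan is to invoke the Granville--Soundararajan pretentious/non-pretentious dichotomy: if $f$ is non-pretentious toward every $\chi(n)n^{it}$, the logarithmic binary Elliott bound of Tao yields enough equidistribution of the pair $(f(a_2n+b'),f(a_1n+d'))\in\mathbb{S}^1\times\mathbb{S}^1$ to conclude. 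If instead $f$ pretends to be $\chi(n)n^{it}$, then $(a_2n+b')^{it}/(a_1n+d')^{it}\to(a_2/a_1)^{it}$, and the constraint $a_1a_2=a$ ties this asymptotic to $f(a_1)/f(a_2)$, reducing matters to locating residues where $\chi$ is coherent on the two progressions. The main obstacle is precisely this pretentious case: upgrading "infinitely often" to "positive log density" will, following Klurman--Mangerel, require working throughout with logarithmic Cesàro averages and a careful analysis of the joint distribution of $\chi$ on the residues of $a_2 n+b'$ and $a_1 n+d'$ modulo the conductor of $\chi$.
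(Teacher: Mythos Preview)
Your treatment of (1)$\Rightarrow$(2) and the contrapositive of (2)$\Rightarrow$(3) is essentially correct and matches the paper (Propositions~\ref{P: a=c} and~\ref{P: optimal conditions}), though the Dirichlet-character construction requires more case analysis than you indicate. The factorization $a=a_1a_2$ with $a_1\mid b$, $a_2\mid d$ is also the paper's starting point for (3)$\Rightarrow$(1).

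The gap is in (3)$\Rightarrow$(1). First, a pretentious/non-pretentious dichotomy on $f$ alone is not enough: after approximating the indicator of a small arc by a trigonometric polynomial, one must control $\logE_{n\leq X}\, f^{\ell}(a_2n+b')\overline{f^{\ell}(a_1n+d')}$ for \emph{all} $\ell$ in a range, and even when $f$ is non-pretentious some powers $f^{\ell}$ may be pretentious (e.g.\ $f=\lambda$), so Tao's theorem is unavailable for those~$\ell$. Second, your pretentious-case sketch does not work: if $f\sim\chi(n)n^{it}$, there is no reason for $f(a_1)/f(a_2)$ to equal $(a_1/a_2)^{it}$, and the values of $\chi$ on the two progressions need not be compatible either, so the two terms do not align. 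The paper's approach here is substantially different from Klurman--Mangerel (and says so explicitly): one restricts to progressions $Q^2n+r_Q$ with $Q$ ranging over a multiplicative F{\o}lner set $\Phi_K$, with $r_Q$ chosen via congruences that are solvable precisely because $a\mid bd$; after a concentration estimate, averaging over $Q\in\Phi_K$ kills the pretentious contribution \emph{unless} $f^{\ell}(n)=n^{it}$ identically (Proposition~\ref{P: binary correlations proposition}(iii)), and that exceptional case is handled separately (Proposition~\ref{P: main proposition for f-f in the essentially finite valued case}). There is also a subtlety on the non-pretentious side that you do not mention: aperiodic is weaker than strongly aperiodic, so one must first pass to a subsequence $X_m\to\infty$ along which all relevant $f^{\ell}$ simultaneously satisfy Tao's hypothesis (via Corollary~\ref{C: locally pretentious on all scales implies pretentious}). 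Both ingredients are missing from your outline.
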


\begin{remark}\label{rem lower density}
We note here that if $f$ is pretentious or finitely generated (see Definitions \ref{D: definition of pretentious and aperiodic} and \ref{D: finitely generated}), then the set 
$\A(f, \e)$ above has positive lower logarithmic density. We prove this in \cref{section result f-f}.
\end{remark}

For instance, Theorem \ref{T: optimal conditions for Bohr recurrence} implies that for all $f\in \M$, 
$$\liminf_{n\to \infty} |f(6n+3)-f(6n+2)|=0.$$
This provides positive evidence that
$\{(6n+3)/(6n+2) \colon n \in \N \}$ is a set of multiplicative recurrence.

In \cite[Theorem 1.1]{Klu-Man}, Klurman and Mangerel prove
that for any $f\in \M$, 
$$\liminf_{n\to \infty} |f(n+1) - f(n)|=0.$$
\cref{T: optimal conditions for Bohr recurrence} generalizes this result,
as in place 
of $(n+1, n)$ we can put any pair $(an+b, an+d)$ with 
$au \mid bd$, where $u=(a,b,d)$, and our conditions on $a,b,d$ are
optimal.  Furthermore, Theorem \ref{T: optimal conditions for Bohr recurrence} generalizes
\cite[Corollary 1.7]{Moreira&friends}.

We also prove the following result concerning pairs of multiplicative functions.
\begin{theorem}\label{a result on pairs}
For any $a\in \N$, $f,g \in \M$ and $\e>0$, the set 
\begin{equation*}
    \A(f,g,\e):=\{n\in \N \colon |f(an+1)-g(an)| < \e \}
\end{equation*}
has positive upper logarithmic density. In particular, 
this implies that for any $a\in \N$ and $f,g \in \M$
\begin{equation*}
\liminf_{n\to \infty} |f(an+1)-g(an) |=0.
\end{equation*}
\end{theorem}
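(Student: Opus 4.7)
The plan is to recast the statement on $\mathbb{S}^1$ using multiplicativity of $g$ and then proceed by a spectral/Fourier argument. Writing $g(an)=g(a)g(n)$ and defining $H(n) := \overline{g(a)}\,f(an+1)\,\overline{g(n)} \in \mathbb{S}^1$, one checks that $|H(n)-1| = |f(an+1)-g(an)|$, so the assertion is equivalent to $\{n \in \N : |H(n)-1|<\varepsilon\}$ having positive upper logarithmic density. Extract a subsequence $N_j\to\infty$ along which the logarithmic empirical measures
\begin{equation*}
\mu_{N_j} := \frac{1}{\log N_j}\sum_{n=1}^{N_j} \frac{1}{n}\,\delta_{H(n)}
\end{equation*}
converge weakly-$*$ to a Borel probability measure $\mu$ on $\mathbb{S}^1$. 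It then suffices to prove $1 \in \supp(\mu)$, for then every neighbourhood $U$ of $1$ has $\mu(U)>0$, which forces $\{n : H(n) \in U\}$ to have positive upper logarithmic density. The Fourier coefficients of $\mu$ are computed as
\begin{equation*}
\hat\mu(k) \;=\; \overline{g(a)}^k\,\lim_{j\to\infty}\logE_{n\le N_j}\,f^k(an+1)\,\overline{g^k(n)}.
\end{equation*}

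\textbf{Aperiodic case.} For each $k\neq 0$, after the substitution $m = an$ and restriction to the arithmetic progression $\{a \mid m\}$, the above correlation becomes a shifted logarithmic two-point average of $f^k$ and $g^k$, which by the logarithmic Elliott conjecture for binary correlations (Tao, refined in the Matom\"aki--Radziwi\l\l--Tao framework) vanishes in the limit unless $f^k$ is pretentious to $g^k \cdot \chi \cdot n^{it}$ for some Dirichlet character $\chi$ and real $t$. If this aperiodic alternative holds for every $k\neq 0$, then $\hat\mu(k)=0$ for all $k\neq 0$, so $\mu$ is the normalised Haar measure on $\mathbb{S}^1$, and in particular $1 \in \supp(\mu) = \mathbb{S}^1$.

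\textbf{Pretentious case (main obstacle).} The real difficulty is when some $f^k$ pretends to $g^k\cdot\chi\cdot n^{it}$: here $\mu$ can fail to be Haar and we must place $1$ in $\supp(\mu)$ by an explicit construction. The plan is to invoke the MRT structure theorem to approximate $f$ and $g$ in logarithmic density by twisted characters $\chi_1(n)n^{is}$ and $\chi_2(n)n^{it}$, freeze the character contribution by restricting $n$ to a residue class modulo the common conductor $q$, absorb the shift via $(an+1)^{is}/(an)^{is}\to 1$, and reduce to showing that the remaining phase $a^{is}\,n^{i(s-t)}$ multiplied by a residue-class-dependent constant lies in a neighbourhood of $1$ on a set of positive upper logarithmic density. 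When $s\neq t$, Weyl equidistribution of $\{(s-t)\log n\}$ at geometric scales supplies such a set; when $s=t$, the residue class $r$ must be chosen so that the frozen character/Archimedean factor itself is within $\varepsilon$ of $1$. The principal obstacle is promoting the logarithmic-density approximation of $f,g$ by twisted characters into a genuinely pointwise statement along a positive-log-density set, which requires careful quantitative bookkeeping of the MRT error terms and the character arithmetic.
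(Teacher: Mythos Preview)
Your setup and Fourier expansion are close in spirit to the paper's approach (which also reduces to controlling correlations $\logE_{n\le X} f^\ell(an+1)\overline{g^\ell(n)}$ for $|\ell|<R$), but there are two genuine gaps.

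First, your ``aperiodic case'' misreads Tao's theorem. Theorem~A does \emph{not} say the binary correlation vanishes unless $f^k$ pretends to $g^k\cdot\chi\cdot n^{it}$; it says the correlation is small provided \emph{one} of the two functions has large distance from every twisted character $\chi(n)n^{it}$ with $|t|\le BX$, $q\le B$. So if, say, both $f^k$ and $g^k$ are pretentious but not to each other, Tao's theorem gives nothing. Moreover, for aperiodic but not strongly aperiodic $f^k$, Tao's theorem only applies along a well-chosen subsequence $X_m$ (Corollary~\ref{C: locally pretentious on all scales implies pretentious}), and you need a \emph{single} subsequence working for all $k$ in $\{1,\dots,R-1\}$ simultaneously; you cannot simply fix $N_j$ in advance and hope the correlations behave. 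The paper handles this by the product trick: one shows the distance of $f^{a_1\cdots a_{k_1}}$ diverges along some $X_m$, then the triangle inequality propagates this to each $f^{a_i}$.

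Second, your ``pretentious case'' is not a proof but a sketch that admits its own principal obstacle. The paper's mechanism here is entirely different from what you outline and is the heart of the argument: one averages not over $n\le X$ but over carefully chosen arithmetic progressions $Q^2n+r_Q$ with $Q$ ranging over the multiplicative F{\o}lner set $\Phi_K$, and then performs a \emph{second} average over $Q\in\Phi_K$. After applying concentration estimates (Lemma~\ref{L: concentration estimate}) to both pretentious functions, the resulting expression contains a factor $f(W)W^{-it_f}\chi_f(\ell_Q)\chi_g(m_Q)$, and shift-invariance under $Q\mapsto Qp$ (Claims~\ref{Claim: congruence relations for l_Q} and~\ref{Claim: congruence relations for m_Q}) forces this $\Phi_K$-average to vanish unless $f$ is literally a modified character. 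Only in that residual case does one argue directly (Case~3 of the paper's proof), and even there the argument is a nontrivial contradiction on a set $\A'''$ using that some $\tilde g^{\ell_2}\equiv 1$. None of this structure---the $Q$-trick, the congruence bookkeeping for $r_Q$, the F{\o}lner averaging---appears in your plan, and the ``Weyl equidistribution of $(s-t)\log n$'' idea does not address the actual obstruction.
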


Theorem \ref{a result on pairs} also generalizes \cite[Theorem 1.1]{Klu-Man}, as it holds for pairs $f,g$ in $\M$, and
$(n+1, n)$ can be replaced by $(an+1, an)$ for any $a\in \N$.

\begin{remark}
In Section \ref{proofs pairs} we show the existence 
of a pair $f,g\in \M$ that satisfy the property $\liminf_{n\to \infty} 
|f(n+2)-g(n)|>0$. Therefore,  even for $a=1$ in the context of Theorem \ref{a result on pairs}, one cannot 
replace $(n+1, n)$ by $(n+k, n)$ for any $k\in \Z$. Hence, it seems 
difficult to prove an analog of Theorem \ref{T: optimal conditions for Bohr recurrence} for pairs $f,g\in \M$, unless one restricts to a smaller class of multiplicative functions.
\end{remark}

\subsection{Multiplicative recurrence for finitely generated systems}

Motivated by \cite[Theorem 1.6 and Question 7.2]{Moreira&friends}, where the authors 
investigate under what conditions on $a,b,c,d$ the set $S = \{(an+b)/(cn+d)\colon 
n\in\N\}$ is a set of topological multiplicative recurrence, we are interested in 
finding sufficient conditions so that $S$ is a set of measurable multiplicative 
recurrence (see Definition~\ref{D: definition of measurable multiplicative recurrence}) (a property stronger than the topological one). In view of 
\cref{T: optimal conditions for Bohr recurrence}, it is reasonable to ask whether the 
condition $a=c$, and $b=d$ or $a\mid bd$ is sufficient to show that $S$ is a set of 
measurable (or at least topological) multiplicative recurrence. Note that this 
condition is more general than the one in \cite[Theorem 1.6]{Moreira&friends}. We are 
able to prove that, under this condition, $S$ is indeed a set of measurable 
multiplicative recurrence for an important class of multiplicative systems. 

\begin{definition}
    Let $(X,\X,\m)$ be a Borel probability space. We say that a sequence of invertible transformations $T_n\colon X\to X$, $n\in \N$, induces a \emph{multiplicative measure-preserving action} on $(X,\X,\m)$ if the following hold:
    \begin{enumerate}
        \item Each map $T_n$ is \emph{measure-preserving}: for any set $A\in \X$, we have $T_n^{-1}A\in \X$ and $\m(T_n^{-1}A)=\m( A)$. 
        \item The $T_n$ act multiplicatively on $X$: for any $n,m\in \N$, we have the relation $T_n\circ T_m(x)=T_{nm}(x)$ for almost all $x\in X$.
    \end{enumerate}
    We call $\left(X,\X,\m,(T_n)_{n\in \N}\right)$ a \emph{multiplicative system}.
\end{definition}

It is evident that a multiplicative system is uniquely determined by the transformations $T_p$, where $p$ is prime.
A multiplicative system is called {\em finitely generated}
if $\{T_p\colon p\in\P\}$ is finite. 

Finitely generated multiplicative systems are important due to their connections with finitely generated multiplicative functions (see Definition~\ref{D: finitely generated}), such as the Liouville function $\lambda$, which are central to understanding multiplicative properties of the integers. This subclass of multiplicative systems has been studied in recent works of Bergelson and Richter \cite{Bergelson-Richter} and the first author \cite{Charamaras-multiplicative}, investigating various aspects of the interplay between ergodic theory and multiplicative number theory. In both these works the finitely generated property is essentially exploited, and all the relevant results there are not known to hold for systems that do not share this property.

\begin{definition}\label{D: definition of measurable multiplicative recurrence}
    Let $(p_n)_{n\in\N}$ and $(q_n)_{n\in\N}$ be two sequences of positive integers. Then the set $S=\{p_n/q_n\colon n\in \N\}$ is called a \emph{set 
    of (measurable) multiplicative recurrence} for the system $\left(X,\X,\m,(T_n)_{n\in 
    \N}\right)$ if and only if for every set $A\in \X$ with positive 
    measure, there exist infinitely many\footnote{This is necessary in some cases to ensure that the result is non-trivial, as a set of rationals that contains 1 satisfies the relevant positivity property for some $n$.} $n\in \N$, such that 
    $$\m(T^{-1}_{p_n}A\cap T^{-1}_{q_n} A)>0.$$
    It is called a \emph{set of multiplicative recurrence} if it is a set 
    of multiplicative recurrence for all multiplicative systems.
\end{definition}

Throughout this paper, we omit the word measurable\footnote{In \cite{Moreira&friends}, the authors study Question \ref{question DLMS} under the notion of \emph{topological multiplicative recurrence}, which is weaker than the one in Definition \ref{D: definition of measurable multiplicative recurrence}.}
, and we will only refer to sets 
satisfying Definition \ref{D: definition of measurable multiplicative recurrence}
as sets of multiplicative recurrence. 

Our next theorem shows that under the same assumptions as in Theorem \ref{T: optimal conditions for Bohr recurrence}, we have multiplicative recurrence for all finitely generated systems.

\begin{theorem}\label{T: multiplicative recurrence for finitely generated systems}
    Let $a\in \N$, $b,d \in \Z$ with $(a,b,d)=1$ 
    satisfy $b=d$ or $a\mid bd$. Then, the set 
    $$S=\left \{\frac{an+b}{an+d}\colon n\in \N\right\}$$ 
    is a set of multiplicative recurrence for all finitely generated 
    multiplicative systems. 
\end{theorem}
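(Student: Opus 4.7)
The plan is to linearize the action via the finitely generated structure and then apply \cref{T: optimal conditions for Bohr recurrence} together with \cref{rem lower density} through the spectral theorem for the resulting $\Z^k$-action.

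\emph{Step 1 (Reduction to a $\Z^k$-action).} Since the multiplicative system is finitely generated, $\{T_p : p \in \P\} = \{T_{(1)},\dots,T_{(k)}\}$ for some $k \in \N$, inducing a partition $\P = P_1 \sqcup \cdots \sqcup P_k$ with $T_p = T_{(i)}$ for $p \in P_i$. The completely additive functions $\phi_i(n) := \sum_{p \in P_i} v_p(n)$ (where $v_p$ is the $p$-adic valuation) then satisfy $T_n = \bT^{\phi(n)}$, where $\bT^w := T_{(1)}^{w_1}\cdots T_{(k)}^{w_k}$ is the commuting $\Z^k$-action generated by $T_{(1)},\dots,T_{(k)}$. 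It thus suffices to show that for every $A \in \X$ with $\mu(A) > 0$, there exist infinitely many $n \in \N$ with $\mu(A \cap \bT^{w_n} A) > 0$, where $w_n := \phi(an+d) - \phi(an+b) \in \Z^k$.

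\emph{Step 2 (Spectral reformulation and application of \cref{T: optimal conditions for Bohr recurrence}).} By the spectral theorem for $\Z^k$-actions,
\[
\mu(A \cap \bT^{w_n} A) = \int_{\T^k} \chi_\theta(an+d)\,\overline{\chi_\theta(an+b)}\, d\sigma_A(\theta),
\]
where $\sigma_A$ is the positive spectral measure of $\mathbf{1}_A$ on $\T^k$ (of total mass $\mu(A)$, with atomic mass at $\theta = 0$ of size at least $\mu(A)^2$), and $\chi_\theta(n) := e^{2\pi i \phi(n) \cdot \theta}$ is, for each $\theta \in \T^k$, a \emph{finitely generated} completely multiplicative function in $\M$. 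Under the hypotheses on $(a,b,d)$, \cref{T: optimal conditions for Bohr recurrence} applies to each such $\chi_\theta$, and by \cref{rem lower density} the set $\A(\chi_\theta,\e) := \{n : |\chi_\theta(an+b) - \chi_\theta(an+d)| < \e\}$ has positive lower logarithmic density for every $\theta$.

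\emph{Step 3 (Combining over $\theta$ --- the main obstacle).} To extract positivity of the spectral integral for infinitely many $n$, the plan is to decompose $\sigma_A = \mu(A)^2\, \delta_0 + \sigma_A^{\mathrm{atom}'} + \sigma_A^{\mathrm{cont}}$. The atom at $0$ contributes $\mu(A)^2 > 0$ uniformly in $n$; the continuous part should contribute negligibly on logarithmic average by cancellation arguments (using that for generic $\theta$ the function $\chi_\theta$ is non-pretentious, so one expects $\logE_n \chi_\theta(an+d)\overline{\chi_\theta(an+b)} \to 0$ via Hal\'asz-type estimates); and the finitely many significant atoms $\theta_1,\dots,\theta_M$ of $\sigma_A^{\mathrm{atom}'}$ require \emph{joint} control --- finding infinitely many $n$ with $|\chi_{\theta_j}(an+b) - \chi_{\theta_j}(an+d)| < \e$ for all $j = 1,\dots,M$ simultaneously. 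This joint step is the main technical difficulty, since the intersection of finitely many positive-lower-density sets is not automatically non-empty. The natural resolution is to apply \cref{T: optimal conditions for Bohr recurrence} not only to each $\chi_{\theta_j}$ individually but to every integer-power product $\prod_j \chi_{\theta_j}^{l_j}$ (which remains finitely generated in $\M$), and then to invoke a Stone--Weierstrass / Fourier approximation argument on the torus $(\mathbb{S}^1)^M$ to convert individual-character approximations into simultaneous ones, yielding the desired joint recurrence and hence positivity of $\mu(A \cap \bT^{w_n} A)$ on an infinite set of $n$.
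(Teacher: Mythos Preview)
Your reduction to a $\Z^k$-action and the spectral reformulation are valid, and indeed the paper also proceeds via a spectral measure (though it places $\sigma$ directly on $\M$, supported on $\Mfg$, rather than on $\T^k$). The essential content is the same up to this point.

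The genuine gap is in Step 3. The atoms-versus-continuous decomposition of $\sigma_A$ is the wrong cut. What governs the size of the correlation $\chi_\theta(an+b)\overline{\chi_\theta(an+d)}$ is whether $\chi_\theta$ is pretentious or (strongly) aperiodic, and this dichotomy does not align with the atomic/continuous decomposition of $\sigma_A$. Pretentious $\chi_\theta$ can sit in the continuous support, and for such $\theta$ the logarithmic average of $\chi_\theta(an+b)\overline{\chi_\theta(an+d)}$ converges (by Klurman's theorem) but \emph{not} to zero; Hal\'asz-type estimates say nothing about binary correlations. So your claim that the continuous part is negligible is unjustified. The paper handles this by the $Q$-trick: restricting $n$ to a progression $Q^2n+r_Q$ and averaging over $Q\in\Phi_K$, which is precisely what forces the pretentious contributions to vanish (Proposition~\ref{P: binary correlations proposition}(iii)).

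For the atomic part, your proposed Fourier-approximation argument is reasonable in outline, but it needs quantitative control on the correlation \emph{averages} $\logE_{n}\prod_j\chi_{\theta_j}^{l_j}(an+b)\overline{\prod_j\chi_{\theta_j}^{l_j}(an+d)}$, not merely the positive-lower-density statement of \cref{T: optimal conditions for Bohr recurrence}/\cref{rem lower density}. Those theorems, used as black boxes, give you one set of positive density per function, with no reason for the sets to overlap. The required correlation control is exactly what Proposition~\ref{P: binary correlations proposition} provides, again along a suitable progression. The paper's proof avoids the whole atomic/continuous detour: it splits $\Mfg$ into $\{1\}$, pretentious, and strongly aperiodic (the latter two exhaust $\Mfg\setminus\{1\}$ by Proposition~\ref{P: finitely generated non-pretentious is strongly aperiodic}), then integrates the correlation bounds from Proposition~\ref{P: binary correlations proposition} and \cref{T: Tao theorem} over $\sigma$ via dominated convergence and Fatou, all along the progression $Q^2n+r_Q$.
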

We note that the assumption $(a,b,d)=1$ in Theorem \ref{T: multiplicative recurrence for finitely generated systems} is not restrictive; for general 
$a,b,d$, we can always factor out $u=(a,b,d)$, and we have that
$S$ is equal to $\{(a'n+b')/(a'n+d') \colon n\in \N\}$,
where $a'=a/u, b'=b/u, d'=d/u$ and $(a',b',d')=1$.

\begin{remark}
We remark that the argument we use for Theorem \ref{T: multiplicative recurrence for finitely generated systems} also implies that
the set $\{2n/(n+1) \colon n\in \N \}$
is a set of recurrence for finitely generated multiplicative systems
(see Remark \ref{new_remark}). From 
Theorem \ref{T: optimal conditions for Bohr recurrence} we know that the 
previous set is not a set of 
recurrence for all multiplicative systems. This establishes that 
recurrence for all multiplicative systems is strictly stronger than 
recurrence for finitely generated multiplicative systems. 
\end{remark}

\subsection{Open questions}
From Theorem \ref{T: optimal conditions for Bohr recurrence} and 
Lemma \ref{L: recurrence implies Bohr recurrence}, one sees that 
for $a,c \in \N$, $b,d\in \Z$ with $(a,b,c,d)=1$,
if the set $S=\{(an+b)/(cn+d)\colon n\in \N \}$ is a set of multiplicative 
recurrence, then $a=c$, and $b=d$ or $a\mid bd$. We conjecture 
that those conditions on $a,b,c,d$ are also sufficient for the set $S$ to 
be a set of multiplicative recurrence.

\begin{conjecture}\label{conj_1}
    Let $a,c\in \N$, $b,d \in \Z$ with $(a,b,c,d)=1$. The set $S=\{(an+b)/(cn+d) \colon n\in \N\} $
    is a set of measurable multiplicative recurrence if and only if $a=c$, and $b=d$ or $a\mid bd$.
\end{conjecture}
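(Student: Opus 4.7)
The necessity direction is immediate from Theorem \ref{T: optimal conditions for Bohr recurrence} combined with Lemma \ref{L: recurrence implies Bohr recurrence}: any multiplicative recurrence statement forces the Bohr-type condition, which by Theorem \ref{T: optimal conditions for Bohr recurrence} forces $a=c$ and either $b=d$ or $a\mid bd$. The substance of \cref{conj_1} therefore lies in sufficiency: granting these arithmetic conditions, one must upgrade Theorem \ref{T: multiplicative recurrence for finitely generated systems} from finitely generated systems to arbitrary multiplicative systems.

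My plan is to extend the semigroup action $(T_n)_{n\in\N}$ to a $\Q_{>0}$-action by setting $T_{p/q}:=T_pT_q\inv$, so that by measure-preservation
\[
\m(T_{an+b}\inv A \cap T_{an+d}\inv A) = \m\bigbrac{T_{(an+b)/(an+d)}\inv A \cap A}.
\]
The problem thus becomes one of recurrence for the unitary Koopman representation of $\Q_{>0}$ on $L^2(X,\m)$. Since $\Q_{>0}$ is a free abelian group on the primes, its discrete Pontryagin dual is naturally identified with $\M$, and the spectral theorem attaches to $\1_A$ a positive measure $\sigma$ on $\M$ satisfying
\[
\m(T_s\inv A \cap A) = \int_{\M} f(s)\, d\sigma(f) \quad \text{for all } s\in\Q_{>0}.
\]
I would then form the logarithmic average over $n\leq N$ of this integrand with $s_n:=(an+b)/(an+d)$, swap the order of integration via Fubini, and try to establish a positive lower bound on the resulting double integral.

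To extract positivity, the natural move is to split $\sigma$ according to a pretentious/aperiodic dichotomy, writing $\sigma = \sigma_{\textup{pret}} + \sigma_{\textup{aper}}$, where $\sigma_{\textup{pret}}$ is carried by those $f\in\M$ pretending to be some twisted Dirichlet character $\chi(n)n^{it}$ and $\sigma_{\textup{aper}}$ by the rest. For $\sigma_{\textup{pret}}$, the argument proving Theorem \ref{T: optimal conditions for Bohr recurrence}, together with the strengthening to positive \emph{lower} logarithmic density recorded in \cref{rem lower density}, should give a uniformly positive contribution once one checks that the return-time sets $\A(f,\e)$ behave well in families parameterized by $(\chi,t)$. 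For $\sigma_{\textup{aper}}$, one would hope that the logarithmic averages of $f(an+b)\overline{f(an+d)}$ vanish uniformly on the support, by appealing to binary correlation cancellation results for aperiodic multiplicative functions along the pattern $(an+b,an+d)$.

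The principal obstacle, emphasized by the remark that $\{2n/(n+1)\colon n\in\N\}$ is a set of recurrence for finitely generated systems but not in general, is precisely the control of the aperiodic part: existing binary correlation tools for aperiodic multiplicative functions along pairs $(an+b,an+d)$ are not yet formulated with the uniformity in $f$ needed to integrate against an arbitrary spectral measure $\sigma_{\textup{aper}}$. A secondary difficulty is that the pretentious decomposition must be measurable in $f$ and that the lower-density conclusion of Theorem \ref{T: optimal conditions for Bohr recurrence} must be upgraded to a form that is uniform over families of pretentious multiplicative functions. Securing both of these uniform statements---a uniform aperiodic cancellation result on the one hand, and a uniform pretentious Bohr-recurrence result on the other---seems to be the decisive step needed to bridge Theorem \ref{T: multiplicative recurrence for finitely generated systems} and \cref{conj_1}.
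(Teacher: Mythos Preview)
The statement you are discussing is a \emph{conjecture} in the paper, not a theorem; there is no proof of it in the paper. Your writeup is therefore not a proof either, but a strategic outline together with a diagnosis of the obstacles, and in that respect it is accurate. The spectral-measure reduction you describe is exactly what the paper carries out in Section~\ref{section_mult-rec-for-fg-systems} for finitely generated systems (see Theorem~\ref{thm_rec_fg_reformulation} and \eqref{eqn_spectral_meas}), including the split of $\sigma$ into pretentious and aperiodic pieces and the use of Proposition~\ref{P: binary correlations proposition} and Theorem~\ref{T: Tao theorem} on each piece. You have correctly identified that the necessity direction is settled by Theorem~\ref{T: optimal conditions for Bohr recurrence} and Lemma~\ref{L: recurrence implies Bohr recurrence}, and that the open content is sufficiency.

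One point deserves sharpening. You invoke the remark about $\{2n/(n+1)\colon n\in\N\}$ as evidence that the difficulty lies in the aperiodic part of $\sigma$. That is a slight misreading. The failure of $\{2n/(n+1)\}$ to be a set of general multiplicative recurrence is witnessed by the \emph{pretentious} function $f(n)=n^{it}$ (Proposition~\ref{P: a=c}); the reason the finitely generated argument nonetheless goes through for this set is that $n^{it}$ is not finitely generated unless $t=0$. So that example illustrates that the pretentious component can cause trouble once one leaves the finitely generated world. For the actual conjecture, however, the arithmetic hypotheses $a=c$ and $a\mid bd$ are precisely what neutralize the archimedean twists $n^{it}$ and the modified characters (this is the content of case (iii) in Proposition~\ref{P: binary correlations proposition} with $g=\overline{f}$), so the pretentious part is indeed under control. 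The genuine obstruction, as the paper spells out in Section~\ref{section_intro} under ``Proof strategy'', is the aperiodic-but-not-strongly-aperiodic functions: for a single such $f$ one can pass to a subsequence $(X_m)$ along which the correlations vanish (Corollary~\ref{C: locally pretentious on all scales implies pretentious}), but there is no known way to choose a common subsequence that works simultaneously for all $f$ in the support of $\sigma_{\textup{aper}}$. That is the uniformity statement you flag, and it remains open.
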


We remark that if this conjecture is true, then this would yield a negative answer to \cite[Question 7.2]{Moreira&friends}.
Note that in the previous, if $b=d$, then $S=\{1\}$, which is trivially a 
set of multiplicative recurrence, so the interesting case is $b\neq d$ and 
$a\mid bd$.

Given a finite collection $f_1, \ldots, f_{\ell} \in \M$, 
as in Lemma \ref{L: recurrence implies Bohr recurrence} one can consider 
the multiplicative action on $(\mathbb{S}^1)^{\ell}$ defined by 
$T_n(z_1, \ldots, z_{\ell})=(f_1(n)z_1, \ldots, f_{\ell}(n) z_{\ell}).$
Then a special case of Conjecture \ref{conj_1} is the following:
\begin{conjecture}\label{conj_2}
Let $a\in \N$, $b,d \in \Z$ with $(a,b,d)=1$. If $b=d$ or 
$a\mid bd$, then for any finite collection $f_1, \ldots, f_{\ell} \in \M$, 
$$\liminf_{n\to \infty} \sum_{j=1}^{\ell} 
|f_j(an+b)-f_j(an+d)| =0.$$
\end{conjecture}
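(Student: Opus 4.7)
The plan is to view the tuple $\vec f = (f_1, \ldots, f_\ell)$ as a completely multiplicative homomorphism from $\N$ into the compact abelian group $(\mathbb{S}^1)^\ell$, and to prove the stronger joint statement $\liminf_{n\to\infty} \|\vec f(an+b) - \vec f(an+d)\|_{\infty} = 0$, which clearly implies the conjecture (note that the ``$c$'' in the displayed sum appears to be a typo for ``$d$''). The strategy is to follow the same pretentious-versus-aperiodic dichotomy that drives the proof of \cref{T: optimal conditions for Bohr recurrence}, but applied coordinate by coordinate and combined with a joint equidistribution input for the aperiodic coordinates.

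First, apply the Granville--Soundararajan dichotomy to each $f_j$ individually: either $\mathbb{D}(f_j, \chi_j \cdot n^{it_j}; \infty) < \infty$ for some Dirichlet character $\chi_j$ and some $t_j \in \R$ (pretentious case), or $f_j$ is aperiodic. Partition $\{1,\ldots,\ell\}$ into $J_{\mathrm{pret}}$ and $J_{\mathrm{aper}}$ accordingly. For indices in $J_{\mathrm{pret}}$, the number-theoretic content behind the implication $(3)\Rightarrow(2)$ of \cref{T: optimal conditions for Bohr recurrence}, namely that $b=d$ or $a \mid bd$ forces the existence of residue classes on which $\chi_j(an+b) = \chi_j(an+d) \neq 0$, can be applied simultaneously: letting $Q$ be a common multiple of the moduli of all $\chi_j$ with $j \in J_{\mathrm{pret}}$, there is a progression $n \equiv n_0 \pmod Q$ along which this equality holds for every such $j$. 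On this progression, the archimedean factor $((an+b)/(an+d))^{it_j}$ tends to $1$ uniformly in $j$, and the Lipschitz-type estimates relating pretentious distance to pointwise behavior (in the spirit of Klurman--Mangerel) yield a subsequence of positive lower logarithmic density along which $f_j(an+b)/f_j(an+d) \to 1$ simultaneously for every $j \in J_{\mathrm{pret}}$.

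For the aperiodic coordinates, the natural route is to establish a joint two-point correlation bound: for every nonzero integer vector $(k_j)_{j \in J_{\mathrm{aper}}}$,
\[
\frac{1}{\log N} \sum_{n \le N} \frac{1}{n} \prod_{j \in J_{\mathrm{aper}}} \bigl( f_j(an+b) \,\overline{f_j(an+d)} \bigr)^{k_j} \longrightarrow 0 \quad \text{as } N \to \infty.
\]
A Fourier expansion on $(\mathbb{S}^1)^{|J_{\mathrm{aper}}|}$ then gives joint logarithmic equidistribution of the tuple $\bigl(f_j(an+b)\,\overline{f_j(an+d)}\bigr)_{j \in J_{\mathrm{aper}}}$, so the set of $n$ on which this tuple lies $\varepsilon$-close to $\vec 1$ has positive logarithmic density inside any residue class modulo $Q$. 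Intersecting this set with the progression from the pretentious step then yields infinitely many $n$ with $\sum_{j=1}^{\ell} |f_j(an+b) - f_j(an+d)| < \varepsilon$ for every $\varepsilon > 0$, closing the argument.

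The principal obstacle is the joint correlation bound in the aperiodic step: it is essentially a two-point Elliott-type asymptotic for a product of shifted aperiodic multiplicative functions, and for $|J_{\mathrm{aper}}| \ge 2$ it lies beyond what the single-function Matom\"aki--Radziwi\l{}\l{}--Tao theorem directly supplies. Making this step unconditional appears to require the logarithmic Elliott conjecture machinery of Tao and Tao--Ter\"av\"ainen, or genuinely new joint pretentious-distance inequalities that rule out conspiracies among the $f_j$. In the meantime, an unconditional partial result in the direction of \cref{conj_2} should follow whenever each $f_j$ is either pretentious or finitely generated, by combining the pretentious step above with the positive lower density statement from \cref{rem lower density} in place of the joint equidistribution input.
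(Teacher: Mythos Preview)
This statement is Conjecture~\ref{conj_2} in the paper, and the paper does \emph{not} prove it; it is explicitly left open. In the ``Proof strategy'' subsection the authors explain exactly why their methods stop short: for a single $f$ one can always pass to a subsequence $X_m\to\infty$ along which every aperiodic power $f^\ell$ behaves like a strongly aperiodic function (via Corollary~\ref{C: locally pretentious on all scales implies pretentious}), but for several functions $f_1,\ldots,f_\ell$ it is unclear how to find a \emph{common} subsequence that works simultaneously for all the aperiodic products $\prod_j f_j^{k_j}$ arising in a Fourier expansion. This is essentially the obstruction you locate in your aperiodic step, so you have correctly diagnosed why the problem is open; there is no proof in the paper to compare your proposal against.

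That said, your sketch has further difficulties beyond this main gap. First, the pretentious step is oversimplified: even for a single pretentious $f$ the paper does not merely pick a residue class on which $\chi(an+b)=\chi(an+d)$, but runs the $Q$-trick of Proposition~\ref{P: binary correlations proposition}(iii) with averaging over the multiplicative F{\o}lner sequence $\Phi_K$ together with concentration estimates; the residue-class picture you describe does not by itself control the oscillation of $f$ away from $\chi(n)n^{it}$. Second, your joint equidistribution claim is too strong as stated: for a nonzero vector $(k_j)$ the product $g=\prod_{j\in J_{\mathrm{aper}}} f_j^{k_j}$ is a single multiplicative function, so Tao's theorem does apply to it individually, but $g$ need not be aperiodic (e.g.\ if two of the $f_j$ coincide), and for those $(k_j)$ the correlation $\logE_{n\leq N}\, g(an+b)\overline{g(an+d)}$ does not vanish and must be handled on the pretentious side instead. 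Your closing remark about pretentious or finitely generated $f_j$ is on target and matches what the paper actually proves, but the mechanism is the spectral-measure argument behind Theorem~\ref{T: multiplicative recurrence for finitely generated systems} (Section~\ref{section_mult-rec-for-fg-systems}) combined with Proposition~\ref{P: finitely generated non-pretentious is strongly aperiodic}, rather than Remark~\ref{rem lower density}, which concerns only a single $f$.
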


\subsection{Proof strategy}

Our proof strategy for \cref{T: optimal conditions for Bohr recurrence} is substantially different from the one used in 
\cite{Klu-Man}. 
In order to establish \cref{T: optimal conditions for Bohr recurrence}, we employ the $Q$-trick, which appeared in \cite{Fra-Klu-Mor}.
The main idea is that when one wants to handle the additive 
correlations of two multiplicative functions $f,g$, it is often useful to
average over the progression $Qn$, where $Q$ belongs to a multiplicative
F{\o}lner sequence. Then in a lot of cases, one can factor out from the 
averages an $f(Q)$ term, and then performing a multiplicative averaging 
over $Q$, it follows that  
the correlations vanish unless $f$ is the trivial multiplicative function. 

However, in our case, averaging over the progression $Qn$ is not 
sufficient. In order to make the $Q$-trick work, we need to average over
arithmetic progressions $Q^2 n +r_Q$ for some appropriately chosen 
$r_Q \in \{0,1, \ldots, Q^2 -1\}$.
We choose the $r_Q$'s using the Chinese Remainder Theorem so that some helpful (but somewhat complicated)
congruence relations are satisfied. The condition that $a\mid bd$ is 
crucial for those congruences to have solution.

In order to establish the result, we then have to handle  
correlations of the form 
\begin{equation}\label{proof strategy eq 1}
    \frac{1}{|\Phi_K|}\sum_{Q\in \Phi_K} \frac{1}{\log X_m} \sum_{n\leq X_m} 
\frac{f^{\ell}(a(Q^2n+rQ)+b)
\overline{f^{\ell}(a(Q^2n+r_Q)+d)}}{n} 
\end{equation}
for a finite collection of powers of $f^{\ell}$ of $f$, where the
outer averaging is with respect to 
a multiplicative F{\o}lner sequence in $\N$. 
In the previous, $(X_m)_{m\in \N}$ is a sequence that we choose so that 
the following holds: for every $\ell$ in a finite collection of natural numbers, 
if $f^{\ell}$ is aperiodic, then along $(X_m)_{m\in \N}$, $f^{\ell}$
behaves like a strongly
aperiodic function. This means that, for all Dirichlet characters $\chi$ (up to some modulus), we have 
$$\inf_{|t|\leq X_m} \D(f^{\ell}(n), \chi(n) n^{it};1,X_m) \to + \infty$$ as $m\to \infty$. For these powers, we apply Tao's theorem \cite{Tao-Chowla} on 
two-point correlations of multiplicative functions, while for the powers for which $f^{\ell}$ is pretentious, we use
some concentration estimates to simplify the averages.
The proof of Theorem \ref{T: optimal conditions for Bohr recurrence} is the context 
of Sections \ref{section_optimal-conditions} and \ref{section result f-f}. The estimates for binary correlations are all contained in Proposition \ref{P: binary correlations proposition}, which we prove more generally for two functions $f,g\in \mathcal{M}$.

Let us now turn our attention to the proof of Theorem \ref{T: multiplicative recurrence for finitely generated systems} and its 
connection to Conjectures \ref{conj_1} and \ref{conj_2}.
If one wants to establish Conjecture \ref{conj_1} using our proof 
strategy, they have to simultaneously handle correlations of the form 
\begin{equation}\label{eq fin gen and general}
\frac{1}{|\Phi_K|}\sum_{Q\in \Phi_K} 
\frac{1}{\log X_m} \sum_{n\leq X_m} \frac{f(a(Q^2n+rQ)+b)
\overline{f(a(Q^2n+r_Q)+d)}}{n} 
\end{equation}
for all $f\in \M$ along some appropriately chosen arithmetic
progressions $Q^2n+r_Q$ and some sequence of natural numbers $X_m \to \infty$.
In the previous, we would like $(X_m)_{m\in \N}$ to be a sequence which satisfies the 
following: for all $f\in \M$, if $f$
is aperiodic, then along $(X_m)_{m\in \N}$, $f$ behaves like a strongly
aperiodic function. 

However, even for two functions $f,g\in \M$ which are aperiodic but 
not strongly aperiodic\footnote{Functions of this form were initially constructed in \cite{MRT_averaged} as counterexamples to the Elliott conjecture.}, it is
unclear how to choose a sequence $X_m \to \infty$ that works for both simultaneously. This is the main obstruction one would have to overcome to 
prove Conjectures \ref{conj_1} and \ref{conj_2}. 

On the other hand, for the proof of Theorem \ref{T: multiplicative recurrence for finitely generated systems}, we only need to handle 
\eqref{eq fin gen and general} for the collection of 
finitely generated functions $f\in \M$. In this case, we show that all these functions are either pretentious or strongly aperiodic, which also implies that we can always 
take $X_m=m$. We handle each of these cases using Proposition \ref{P: binary correlations proposition}.

\subsection{Notational conventions} 
We let $\N=\{1, 2, \ldots\}$, $\N_0=\{0, 1, 2, \ldots\}$, 
$\mathbb{S}^1$ be the unit circle in $\C$, $\mathbb{U}$ be the 
closed complex unit disk and $\T=\R/\Z$ be the torus.
For $x\in \R$, we let $e(x):=e^{2\pi i x}$ and
$\norm{x}_{\T}$ be the distance of $x$ from its nearest integer.

We use $\P$ for the set of prime numbers, and throughout 
we use $p$ to denote prime numbers. In addition, we denote the $j$-th
prime number by $p_j$. For $p \in \P$, $\ell \in \N_0$ and 
$a\in \Z$, we say that $p^{\ell}$ fully divides $a$, and we denote this by 
$p^{\ell} \parallel a$, if $p^{\ell} \mid a $ and $p^{\ell+1} \nmid a$. 
For $a,b\in \Z$, we write $a\mid b^{\infty}$ if there is $\ell\in \N$ so that 
$a\mid b^{\ell}$.
We always write $(a_1,\ldots, a_k)$ to be the greatest common divisor of 
$a_1,\ldots, a_k$. The inverse of an invertible element $a$ in $\Z_{m}^{\times}$ will be denoted by $a^{-1}$.

For two functions $f,g$, if there is a constant 
$C>0$ such that $|f(X)| \leq C |g(X)|$ for all $X$ sufficiently large, then we write
$f(X)=\Oh(g(X))$ or $f(X)\ll g(X)$. Furthermore, when 
the constant $C$ depends on some parameter $K$, we write
$f(X)=\Oh_{K} (g(X)).$ 
Finally, we write $\oh_{K; X\to \infty}(1)$ to 
denote an error term that goes 
to $0$ as $X$ grows to infinity but depends on $K$, or in other words, 
an error term of the form $\e(X,K)$ with 
$\lim_{X\to \infty}\e(X,K) = 0$ for $K$ fixed.

\vspace{2mm} 
\noindent
\textbf{Acknowledgements.} The authors are thankful to Nikos 
Frantzikinakis for useful discussions and suggestions on previous 
versions of the paper. The authors are also thankful to Joel Moreira and 
Florian K. Richter for several helpful comments.
Finally, the authors would like to thank the 
anonymous referee for a number of useful comments and suggestions.

The first and the third authors were supported by the Swiss National
Science Foundation grant TMSGI2-211214.
The second author was supported by the Research Grant 
ELIDEK HFRI-NextGenerationEU-15689.

\section{Background material on multiplicative functions}
\label{section_background}

\subsection{Averages and density}

Given a non-empty finite set
$A\subset \N$ and a function $f\colon A \to \C$, we denote the Ces\`aro average of $f$ by
$$\mathlarger{\cesE}_{n\in A} f(a):= \frac{1}{|A|} \sum_{n\in A} f(n),$$ 
and the logarithmic average by
$$\mathlarger{\logE}_{n\in A} f(n):=\frac{1}{\sum_{n\in A} \frac{1}{n}} \sum_{n\in A} \frac{f(n)}{n}.$$ 
In particular, since $\sum_{n\leq X} \frac{1}{n}= (1+ 
o_{X\to \infty} (1))\log X$, we define the logarithmic average along the interval $[1,X]$ by the simpler expression
$$\mathlarger{\logE}_{n\leq X} f(n):=
\frac{1}{\log X} \sum_{n\leq X} \frac{f(n)}{n}.$$

Given a subset $E\subseteq \N$, we define its \emph{upper} and 
\emph{lower logarithmic densities} respectively by 
\begin{align*}
    \overline{d}_{\log}(E)&:=\limsup_{X\to\infty} \  \mathlarger{\logE}_{n\leq X}\1_{A}(n),\\
     \underline{d}_{\log}(E)&:=\liminf_{X\to\infty} \  \mathlarger{\logE}_{n\leq X} \1_{A}(n),
\end{align*}
and when they coincide, the limit exists, and we
say that $E$ has logarithmic density equal to this limit.

In our proofs, we will need to average over a suitably chosen progression
with a highly divisible difference. For each $K\in \N$ we define
\begin{equation}\label{E: definition of multiplicative Folner}
    \Phi_K=\left\{p_1^{\theta_1}\cdots p_K^{\theta_K}\colon 2^K<
    \theta_1, \ldots, \theta_K\leq 2^{K+1}\right\},
\end{equation}
where $p_1,\ldots, p_K$ are the first $K$ primes. A similar
sequence is used in \cite{Fra-Klu-Mor}, with $K<\theta_i\leq 2K$ in place of $2^K<\theta_i\leq 2^{K+1}$. We pick our inequalities so that
no number belongs to two distinct sets $\Phi_{K_1},\Phi_{K_2}$, but 
otherwise the choice of inequalities is unimportant.

The sequence $\Phi_K$ satisfies 
\begin{equation}\label{E: limit of multiplicative Folner is 1}
    \frac{|\Phi_K\cap\ \Phi_{K}/p|}{|\Phi_K|}\to 1
\end{equation} as $K\to \infty$,
where for $A\subset \N$ we define 
$$A/p := \{n\in \N\colon pn\in A\}.$$
Sequences that satisfy \eqref{E: limit of multiplicative Folner is 1} are called \emph{multiplicative F{\o}lner} sequences.

\subsection{Distances of multiplicative functions and aperiodicity.}
Let $f,g$ be multiplicative functions whose modulus is bounded by $1$. Following the terminology of Granville and Soundararajan \cite{Granville-Sound-book}, we define the \emph{pretentious distance} of two multiplicative functions between two real numbers $A,B$ by the formula\begin{equation}\label{E: definition of pretentious distance}
    \D(f,g;A,B)
    :=\left(\sum_{A< p\leq B}\frac{1-\Re(f(p)\overline{g(p)})}{p}\right)^{1/2}.
\end{equation}
We define $\D(f,g):=\lim\limits_{X\to \infty} \D(f,g;1,X)$ and say that $f$ \emph{pretends} to be $g$, which we denote by $f\sim g$,
if  $\D(f,g)<+\infty$. It can be shown that the distance function satisfies a triangle inequality. More specifically, for any multiplicative functions $f_1,f_2,g_1,g_2$, we have \begin{equation*}
    \D(f_1,g_1;1,X)+\D(f_2,g_2;1,X)\geq \D(f_1f_2,g_1g_2;1,X),
\end{equation*}which readily implies that\begin{equation*}
    \D(f,g;1,X)+\D(g,h;1,X)\geq \D(f,h;1,X)
\end{equation*}for any $1$-bounded multiplicative functions $f,g,h$. We will use the triangle inequality freely throughout the proofs without further references.

For $z,w \in \mathbb{S}^1$, we have $|z-w|^2=2(1-\Re(z\overline{w}))$, so when $f$ and 
$g$ take values in $\mathbb{S}^1$, we have 
\begin{equation*}
     \D(f,g;A,B)= 
     \left(\frac{1}{2}\sum_{A< p\leq B}\frac{|f(p)-g(p)|^2}{p}\right)^{1/2}.
\end{equation*}

\begin{definition}\label{D: definition of pretentious and aperiodic}
A $1$-bounded multiplicative function $f$ is called \emph{pretentious} if there exists a Dirichlet character $\chi$ and a real number $t$, such that $\D(f(n),\chi(n)n^{it})<+\infty$. Otherwise, $f$ is called \emph{non-pretentious} or \emph{aperiodic}.

A multiplicative function $f$ is called {\em strongly aperiodic} if for any $B>0$,
\begin{equation}\label{unif dist}
    \lim_{X\to \infty}\inf_{|t|\leq BX, q\leq B}\D(f(n),\chi(n)n^{it};1,X) = +\infty,
\end{equation}
where the infimum is taken over all Dirichlet characters $\chi$ of conductor
$q$ with $q\leq B$.
\end{definition}

Throughout, whenever we have an expression as in \eqref{unif dist},
it is implicit that infimum is taken over all Dirichlet characters $\chi$ 
of conductor $q$ with $q\leq B$.

The term aperiodic is related to the following  consequence of Hal\'asz's 
theorem \cite{Halasz}: for a 1- bounded multiplicative function $f\colon \N \to 
\mathbb{U}$, if $\D(f(n),\chi(n) n^{it})=+\infty$ for all Dirichlet 
characters $\chi$ and $t\in \R$, then 
\begin{equation*}
 \lim_{X\to \infty}   \frac{1}{X}\sum_{n\leq X} f(an+b)=0
\end{equation*}for all $a,b\in \N$.
If we consider logarithmic averages instead, then one only 
needs to consider the distances in the case where $t=0$, since otherwise 
the associated average vanishes.
More specifically, we recall an elementary bound for logarithmic means of bounded multiplicative functions.
\begin{lemma}
    Let $f \colon \N \to \mathbb{U}$ be a multiplicative function. 
    Then, we have \begin{equation}\label{E: Halasz for log averages}
        \left|\mathlarger{\logE}_{n\leq X} f(n)\right|\ll \exp\Bigg(-\frac{1}{2}\sum_{p\leq X} \frac{1-\Re f(p)}{p}\Bigg).
    \end{equation}
\end{lemma}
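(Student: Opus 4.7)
The plan follows the standard Hal\'asz-type strategy for logarithmic averages. Fix $\sigma := 1 + (\log 2)/\log X$, so that $X^{\sigma - 1} = 2$ and $\zeta(\sigma) \asymp \log X$ by Mertens' theorem, and set $F(\sigma) := \sum_{n \geq 1} f(n)/n^\sigma$ and $A(X) := \sum_{n \leq X} f(n)/n$. The two steps are first to bound $|F(\sigma)|$ via its Euler product, then to transfer the bound to $|A(X)|$.

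For the first step, I would factor $F(\sigma) = \prod_p L_p(\sigma)$ with local factor $L_p(\sigma) = \sum_{k \geq 0}f(p^k)/p^{k\sigma}$ and compare to the corresponding Euler factor $(1 - 1/p^\sigma)^{-1}$ of $\zeta(\sigma)$. The core pointwise inequality
\[
|L_p(\sigma)|^{2}(1 - 1/p^\sigma)^{2} \leq 1 - \frac{2(1 - \Re f(p))}{p^\sigma} + O(p^{-2\sigma}),
\]
obtained by direct expansion using $|f(p^k)| \leq 1$, yields after taking logarithms, summing over all primes, and exponentiating the estimate $|F(\sigma)| \ll \zeta(\sigma)\exp\bigl(-\sum_p (1 - \Re f(p))/p^\sigma\bigr)$. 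Since $p^{\sigma - 1} \leq 2$ for $p \leq X$ (so $1/p^\sigma \geq 1/(2p)$) and the contribution from $p > X$ to the exponent is $O(1)$, combining gives
\[
|F(\sigma)| \ll \log X \cdot \exp\Bigg(-\frac{1}{2}\sum_{p \leq X}\frac{1 - \Re f(p)}{p}\Bigg).
\]

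For the second step, Abel summation on $n^{1-\sigma}$ produces the identity
\[
\sum_{n \leq X}\frac{f(n)}{n^\sigma} = \tfrac{1}{2}A(X) + (\sigma - 1)\int_{1}^{X} A(t)\,t^{-\sigma}\,dt,
\]
as well as the Mellin-type companion $F(\sigma) = (\sigma - 1)\int_{1}^{\infty} A(t)\,t^{-\sigma}\,dt$. I would then run a self-referential bootstrap on $M(X) := \sup_{t \leq X}|A(t)|/\log t$: substituting $|A(t)| \leq M(X)\log t$ into the identity for $Y \leq X$ in place of $X$, and using that the kernel $(\sigma - 1)\int_1^Y \log t \cdot t^{-\sigma}\,dt$ is a strict fraction of $\log Y$ uniformly in $Y \leq X$ (this is where the explicit choice $\sigma - 1 = (\log 2)/\log X$ is used), taking supremum over $Y$ produces a genuine contraction. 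This isolates $M(X) \ll |F(\sigma)|/\log X$ and, combined with step one, yields the lemma.

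The main obstacle is precisely this second step: a naive estimate on the Dirichlet tail $\sum_{n > X} f(n)/n^\sigma$ via $\sum_{n > X} 1/n^\sigma = O(\log X)$ already destroys the desired decay, so one cannot simply identify $F(\sigma)$ with $\sum_{n \leq X} f(n)/n^\sigma$. The bootstrap avoids this by working entirely on $[1, X]$ and exploiting the slow variation of $A$ on multiplicative scales — a standard Hal\'asz--Tenenbaum device.
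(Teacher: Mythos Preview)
The paper does not give a proof of this lemma; it simply cites \cite[Proposition~1.2.6]{Granville-Sound-book}. So there is no ``paper's proof'' to compare against, and I will evaluate your sketch on its own merits.

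Your Step~1 (bounding $|F(\sigma)|$ via the Euler product with $\sigma=1+(\log 2)/\log X$) is correct and standard.

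Step~2, however, does not close. The intermediate claim you aim for, namely
\[
M(X):=\sup_{t\leq X}\frac{|A(t)|}{\log t}\ \ll\ \frac{|F(\sigma)|}{\log X},
\]
is simply false in general. Take $f=\lambda$ the Liouville function: then $F(\sigma)=\zeta(2\sigma)/\zeta(\sigma)\asymp 1/\log X$, so $|F(\sigma)|/\log X\asymp (\log X)^{-2}\to 0$, whereas $M(X)\geq |A(10)|/\log 10\gg 1$ for every $X\geq 10$. The contraction you describe does go through, but what it actually yields is
\[
M(X)\ \ll\ \sup_{e\leq Y\leq X}\frac{1}{\log Y}\Bigl|\sum_{n\leq Y}\frac{f(n)}{n^{\sigma}}\Bigr|\,,
\]
and this supremum is \emph{not} controlled by $|F(\sigma)|$: for small $Y$ the tail $\sum_{n>Y}f(n)/n^{\sigma}$ is of size $\asymp\log X$ (even after your Abel rearrangement, since the integral $(\sigma-1)\int_{X}^{\infty}|A(t)|t^{-\sigma}\,dt$ alone is already $\asymp\log X$ using only $|A(t)|\leq 1+\log t$), and dividing by $\log Y$ makes this blow up. The Lipschitz-in-$\log t$ bound on $A$ is genuinely too weak: the weighted-average identity $F(\sigma)=(\sigma-1)\int_{0}^{\infty}A(e^{u})e^{-(\sigma-1)u}\,du$ together with $|A(e^{u})-A(e^{u'})|\leq |u-u'|+O(1)$ only gives $|A(X)-F(\sigma)|=O(\log X)$.

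The obstacle you correctly flagged --- passing from $F(\sigma)$ back to $A(X)$ --- is not circumvented by this bootstrap. The usual routes are either to go through the Ces\`aro Hal\'asz inequality (whose proof uses Parseval on a vertical line, or an $L^2$ argument on $\int|F(\sigma+it)|^2\,dt$) and then integrate to recover the logarithmic average, or to invoke the argument in Granville--Soundararajan directly. Your purely real-variable contraction does not supply the missing cancellation.
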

This follows from \cite[Proposition 1.2.6]{Granville-Sound-book}. 
Roughly, it asserts that if a function does not pretend to be 1, then its 
logarithmic means vanish.
We have the following corollary, which is an extension of the previous 
lemma along arithmetic progressions.
\begin{corollary}\label{C: mean value does not depend on progression}
    Let $Q$ be a positive integer and  $f\colon \N \to \mathbb{U}$ be a 
    multiplicative function such that $\mathbb{D}(f,\chi)=+\infty$ for all 
    Dirichlet characters of modulus $Q$. Then, we have 
    \begin{equation}\label{E: independence of mean values from progression}
        \max_{1\leq a\leq Q, (a,Q)=1}  \left| \mathlarger{\logE}_{n\leq X} 
        f(Qn+a)\right|=\oh_{f,Q; X\to \infty}(1),
    \end{equation}
where the error term may depend on $f, Q$.
\end{corollary}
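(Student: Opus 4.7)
The plan is to deduce this from the unconditional logarithmic Halász-type estimate \eqref{E: Halasz for log averages} by expanding the restriction $m\equiv a\pmod Q$ via orthogonality of Dirichlet characters. Fix $a$ with $(a,Q)=1$. First I would perform the change of variables $m=Qn+a$ in the logarithmic average, replacing the weight $1/n$ by $Q/(Qn+a)$; the difference
\[
\frac{1}{n}-\frac{Q}{Qn+a}=\frac{a}{n(Qn+a)}
\]
is summable in $n$, so the error contributes $O_Q(1/\log X)$. Consequently
\[
\logE_{n\leq X} f(Qn+a)=\frac{Q}{\log X}\sum_{\substack{m\leq QX+a\\ m\equiv a\,(\bmod Q)}}\frac{f(m)}{m}+\oh_{Q;X\to\infty}(1).
\]

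Next, since $(a,Q)=1$, orthogonality of the characters mod $Q$ gives, for every $m$ with $(m,Q)=1$, the identity $\1_{m\equiv a\,(\bmod Q)}=\frac{1}{\varphi(Q)}\sum_{\chi\,(\bmod Q)}\overline{\chi(a)}\chi(m)$, so the right-hand side becomes
\[
\frac{Q}{\varphi(Q)\log X}\sum_{\chi\,(\bmod Q)}\overline{\chi(a)}\sum_{m\leq QX+a}\frac{f(m)\chi(m)}{m}+\oh_{Q;X\to\infty}(1).
\]
Now I would apply \eqref{E: Halasz for log averages} to the 1-bounded multiplicative function $f\chi$, which yields
\[
\Bigl|\logE_{m\leq Y} f(m)\chi(m)\Bigr|\ll \exp\Bigl(-\tfrac12\sum_{p\leq Y}\tfrac{1-\Re(f(p)\chi(p))}{p}\Bigr)=\exp\bigl(-\tfrac12 \D(f,\overline{\chi};1,Y)^2\bigr).
\]
Since $\overline{\chi}$ is itself a Dirichlet character of modulus $Q$, the hypothesis $\D(f,\psi)=+\infty$ for every such $\psi$ forces $\D(f,\overline{\chi};1,Y)\to\infty$ as $Y\to\infty$. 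Hence each of the $\varphi(Q)$ inner averages is $\oh_{f,Q;Y\to\infty}(1)$.

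Finally, bounding $|\overline{\chi(a)}|\leq 1$ and using that $\log(QX+a)/\log X\to 1$, the triangle inequality over the $\varphi(Q)$ character twists gives a bound independent of $a$, yielding \eqref{E: independence of mean values from progression}. The only delicate points are (i) checking that the conversion from the weight $1/n$ to the more natural $1/m$ does not destroy the smallness—handled by the telescoping identity above—and (ii) confirming that the assumption on $\D(f,\chi)$ transfers to $\D(f,\overline{\chi})$, which it does since conjugation permutes the characters modulo $Q$. Neither of these constitutes a genuine obstacle; the corollary is essentially a direct orthogonality expansion of the lemma it follows.
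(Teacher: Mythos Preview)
Your proposal is correct and follows essentially the same route as the paper: expand the congruence condition via orthogonality of Dirichlet characters and apply the Hal\'asz-type bound \eqref{E: Halasz for log averages} to each twist $f\chi$ (equivalently $f\overline{\chi}$), which is not $1$-pretentious by hypothesis. You are simply more explicit than the paper about the weight conversion $1/n\to Q/m$ and the normalization $\log X$ versus $\log(QX+a)$, both of which the paper absorbs silently.
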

\begin{proof}
     We use the expansion $${\bf 1}_{a\pmod{Q}}(n)=\frac{1}{\phi(Q)}\sum_{\chi\pmod{Q}} \chi(n)\overline{\chi(a)}, $$
     where $\phi$ denotes Euler's totient function, 
     and apply the previous lemma to the functions $f\overline{\chi}$, which are not $1$-pretentious.
\end{proof}

\begin{definition}\label{D: finitely generated}
    A multiplicative function $f\colon \N \to \C$ is called finitely generated if the set 
$\{f(p)\colon p \in \P\}$ is finite.
\end{definition}

The upside of working with finitely generated functions hinges on the next proposition.

\begin{proposition}\label{P: finitely generated non-pretentious is strongly aperiodic}
    Let $f\in \M$ be a non-pretentious multiplicative function. If $f$ is
    finitely generated, then it is strongly aperiodic. 
\end{proposition}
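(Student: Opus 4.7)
My plan is to proceed by contrapositive: assuming that $f$ is finitely generated yet fails to be strongly aperiodic, I derive that $f$ must be pretentious. By the negation of \eqref{unif dist}, there exist constants $B, M > 0$, a sequence $X_n \to \infty$, Dirichlet characters $\chi_n$ of conductor $q_n \leq B$, and real numbers $t_n$ with $|t_n| \leq B X_n$, such that $\D(f, \chi_n n^{it_n}; 1, X_n) \leq M$ for every $n$. Since only finitely many Dirichlet characters have conductor at most $B$, a pigeonhole step lets me pass to a subsequence on which $\chi_n = \chi$ is a fixed character, say of modulus $q \leq B$.

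I would then distinguish two cases according to the growth of $(t_n)$. In the bounded case, extract a convergent subsequence $t_n \to t^{\ast}$ and exploit both the monotonicity of the pretentious distance in its upper endpoint and its continuity in the parameter $t$ for any fixed truncation $Y$: monotonicity gives $\D(f, \chi n^{it_n}; 1, Y) \leq \D(f, \chi n^{it_n}; 1, X_n) \leq M$ once $X_n \geq Y$, and continuity of the finite prime sum in $t$ then yields $\D(f, \chi n^{it^{\ast}}; 1, Y) \leq M$. Letting $Y \to \infty$ produces $\D(f, \chi n^{it^{\ast}}) \leq M < \infty$, contradicting the non-pretentiousness of $f$.

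The substantive case is $|t_n| \to \infty$, and this is where finite generation is indispensable. Writing $\{z_1, \ldots, z_r\} = \{f(p) : p \in \P\}$ and $P_j = \{p : f(p) = z_j\}$, the bound $\D(f, \chi n^{it_n}; 1, X_n) \leq M$ rearranges (using Mertens to evaluate $\sum_{p \leq X_n, p \nmid q} 1/p$) to
\begin{equation*}
    \Re \sum_{j=1}^{r} z_j \sum_{\substack{p \in P_j,\, p \leq X_n \\ p \nmid q}} \frac{\overline{\chi(p)}\, p^{-it_n}}{p} \;\geq\; \log \log X_n - O(M^2 + 1),
\end{equation*}
which by the triangle inequality forces the inner prime sums to nearly saturate the trivial bound in each residue class modulo $q$. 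I would then invoke classical lower bounds on $|\zeta(1+it)|$ and Dirichlet $L$-functions on the line $\Re s = 1$ (Vinogradov-type bounds arising from the zero-free region), combined with the fact that $\{f(p)\}$ lies in a finitely generated subgroup of $\mathbb{S}^1$, to extract from the supposed saturation a genuine pretentiousness relation $\D(f, \psi n^{it^{\dagger}}) < \infty$ for some Dirichlet character $\psi$ and some $t^{\dagger} \in \R$, again contradicting the hypothesis. The main obstacle is precisely this last reduction: naive oscillation estimates such as $|\sum_{p \leq X} p^{-it}/p| \ll \log\log(|t|+2)$ only match the trivial bound when $|t|$ is allowed as large as $BX$, so the argument must exploit the algebraic structure of $\langle f(p) \rangle \subset \mathbb{S}^1$ (its splitting into a torsion-free part, for which finitely many archimedean factors $n^{is_i}$ appear, and a torsion part which produces finitely many characters) to rule out coherent cancellation in all the prime classes $P_j$ simultaneously, thereby forcing $t_n$ to concentrate near finitely many algebraic constraints — which is what yields $(\psi, t^{\dagger})$.
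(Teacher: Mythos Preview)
Your Case~1 (bounded $t_n$) is fine, but Case~2 is where the content lies and your proposal there does not close. You correctly identify the obstruction: when $|t|$ is allowed as large as $BX$, the naive bound $\bigl|\sum_{p\le X}\chi(p)p^{-it}/p\bigr|\ll\log\log(|t|+2)$ is of the same order as $\log\log X$, so near-saturation of your displayed inequality is not obviously contradictory. Your suggested escape route, exploiting the \emph{algebraic} structure of the group $\langle f(p):p\in\P\rangle\subset\mathbb{S}^1$ and its torsion/torsion-free splitting, is a red herring. The hypothesis ``finitely generated'' says only that the \emph{set} $\{f(p):p\in\P\}$ is finite; the group it generates may well be dense (take $f(p)\in\{e^i,e^{i\sqrt2}\}$), so no useful decomposition into archimedean factors $n^{is_i}$ and Dirichlet characters is available. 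The partition into classes $P_j=\{p:f(p)=z_j\}$ also does not help directly, since the $P_j$ are structureless subsets of primes and you would need equidistribution of $p^{it}$ on each $P_j$ separately.

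The paper's argument bypasses all of this with two ideas you are missing. First, the relevant feature of finite generation is \emph{geometric}, not algebraic: the finite set $\{f(p)\overline{\chi(p)}\}$ misses an open arc $U\subset\mathbb{S}^1$. Second, one shows that the weighted sequence $(p^{it})_{p}$ equidistributes on $\mathbb{S}^1$ uniformly for $(\log X)^{-1}\le|t|\le X$, so a positive proportion of the prime weight has $\chi(p)p^{it}\in U$, forcing $|f(p)-\chi(p)p^{it}|\gg 1$ on that proportion and hence $\D(f,\chi n^{it};1,X)^2\gg\log\log X$. The uniformity in $t$ is obtained by restricting to primes $p\ge Y=\exp((\log X)^C)$ with $2/3<C<1$: the Vinogradov--Korobov bound $L'/L(1+(\log u)^{-1}+it,\chi)\ll (\log|t|)^{2/3+o(1)}$ then integrates to $O(1)$ (not $O(\log\log X)$) because $(\log X)^{2/3+o(1)}/\log Y\to 0$. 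Equidistribution in the arc is then read off from a smooth Fourier expansion, with all nonzero modes controlled by these $O(1)$ bounds. This is the missing mechanism, and without it your Case~2 remains open.
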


We postpone the proof of this result to the \cref{appendix}. In fact, we shall establish strong
aperiodicity for a larger class of multiplicative functions in $\M$, 
including, for instance, all aperiodic multiplicative functions 
for which the set 
$\{f(p)\colon p\in \P\}$ is not dense in $\mathbb{S}^1$.

A particular case of finitely generated functions are functions that take finitely many values on the unit circle. We call these multiplicative functions \emph{finite valued}. It is straightforward to check that a multiplicative function is finite valued if and only if there exists a positive integer $k$ such that $f^k\equiv 1$. The following is immediate from Proposition~\ref{P: finitely generated non-pretentious is strongly aperiodic}.
\begin{corollary}\label{C: finite valued non-pretentious is strongly aperiodic}
    Let $f$ be a non-pretentious multiplicative function that is finite valued. Then, $f$ is strongly aperiodic.
\end{corollary}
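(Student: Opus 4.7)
The plan is to deduce the corollary as a direct specialization of Proposition~\ref{P: finitely generated non-pretentious is strongly aperiodic}, so the only content is to verify that a finite valued multiplicative function automatically falls into the finitely generated class of Definition~\ref{D: finitely generated}.

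First, I would unpack the definition of finite valued: by the paragraph preceding the corollary, $f$ being finite valued means that the image $f(\N) \subset \mathbb{S}^1$ is a finite set (equivalently, $f^k \equiv 1$ for some $k \in \N$). In particular, the restricted image
\[
\{f(p) \colon p \in \P\} \subseteq f(\N)
\]
is finite, which is exactly the condition required by Definition~\ref{D: finitely generated} for $f$ to be finitely generated. So every finite valued $f \in \M$ is finitely generated.

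Next, since by hypothesis $f$ is non-pretentious, I can apply Proposition~\ref{P: finitely generated non-pretentious is strongly aperiodic} to conclude that $f$ is strongly aperiodic, which finishes the proof.

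The argument is essentially a one-line inclusion of classes followed by an invocation of the previous proposition, so there is no real obstacle; the only minor point to confirm is that a finite valued function in the sense of this paper is indeed valued in $\mathbb{S}^1$ (so that Proposition~\ref{P: finitely generated non-pretentious is strongly aperiodic}, which is stated for $f \in \M$, applies), but this is already built into the definition given in the paragraph above the corollary.
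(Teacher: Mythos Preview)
Your proposal is correct and matches the paper's approach exactly: the paper simply states that the corollary is immediate from Proposition~\ref{P: finitely generated non-pretentious is strongly aperiodic}, and your argument spells out the one-line observation that finite valued implies finitely generated.
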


From this, we further deduce the following result.

\begin{corollary}\label{C: non-pretentious almost supported on roots of unity is strongly aperiodic}
    Let $f$ be a non-pretentious multiplicative function such that $f^{\ell}$ is pretentious for some positive integer $\ell$. Then $f$ is strongly aperiodic.
\end{corollary}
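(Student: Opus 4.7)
The plan is to reduce to the finite-valued case already handled by \cref{C: finite valued non-pretentious is strongly aperiodic}, by constructing a finite-valued multiplicative function $F_0\in\M$ that pretends to be $f$. Write the pretending partner of $f^{\ell}$ as $\psi(n)n^{is}$ for some Dirichlet character $\psi$ and some $s\in\R$. Replacing $f(n)$ by $f(n)n^{-is/\ell}$, which is again in $\M$, non-pretentious if and only if $f$ is, and strongly aperiodic if and only if $f$ is (multiplication by $n^{-is/\ell}$ merely shifts the real parameter $t$ appearing in the distance $\D(f,\chi n^{it};1,X)$), I may assume without loss of generality that $s=0$, so that $f^{\ell}\sim \psi$.

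Define $g\in\M$ completely multiplicatively by letting $g(p)$ be a closest $\ell$-th root of $\psi(p)$ to $f(p)$, with the convention $g(p)=1$ on the finitely many primes dividing the conductor of $\psi$. Since $\psi$ takes values in a finite set of roots of unity, each $g(p)$ is a root of unity of bounded order, so $g$ is finite-valued, and $g(p)^{\ell}=\psi(p)$ off the conductor; in particular $\D(g^{\ell},\psi)<\infty$. Let $h:=f\overline{g}\in\M$. Then $h^{\ell}=f^{\ell}\overline{g^{\ell}}$, and the triangle inequality gives
\[
\D(h^{\ell},1)=\D(f^{\ell},g^{\ell})\leq \D(f^{\ell},\psi)+\D(\psi,g^{\ell})<\infty,
\]
so $h^{\ell}$ pretends to be $1$.

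Next, define $h'(p)$ to be the $\ell$-th root of unity closest to $h(p)$, and extend completely multiplicatively, so that $h'\in\M$ takes values in the set of $\ell$-th roots of unity. An elementary trigonometric computation (using $|\sin x|\geq 2|x|/\pi$ on $|x|\leq \pi/2$) gives the pointwise inequality $|z-\zeta|\leq \frac{\pi}{2\ell}|z^{\ell}-1|$ whenever $z\in\mathbb{S}^{1}$ and $\zeta$ is the nearest $\ell$-th root of unity to $z$. Applied prime by prime, this yields $\D(h,h')\leq \frac{\pi}{2\ell}\D(h^{\ell},1)<\infty$. Set $F_0:=gh'\in\M$. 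As a product of two finite-valued functions in $\M$, $F_0$ is itself finite-valued, and since $|g|\equiv 1$ one checks that $\D(f,F_0)=\D(gh,gh')=\D(h,h')<\infty$, so $f$ pretends to be $F_0$.

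Finally, apply \cref{C: finite valued non-pretentious is strongly aperiodic} to $F_0$. If $F_0$ is pretentious, say $F_0\sim \chi n^{it}$, then by transitivity of $\sim$ we would also have $f\sim \chi n^{it}$, contradicting the hypothesis that $f$ is non-pretentious; hence $F_0$ must be strongly aperiodic. The triangle inequality
\[
\D(f,\chi n^{it};1,X)\geq \D(F_0,\chi n^{it};1,X)-\D(f,F_0;1,X),
\]
combined with $\D(f,F_0)<\infty$, then transfers strong aperiodicity from $F_0$ to $f$, concluding the argument. The main delicate point is the initial reduction to $s=0$: without it, the natural definition of $g$ picks up a factor $p^{is/\ell}$ on each prime and thereby fails to be finite-valued, so the reduction to the previous corollary would collapse.
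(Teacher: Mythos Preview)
Your proof is correct and follows essentially the same strategy as the paper: both arguments hinge on the pointwise circle inequality $|z-\zeta|\leq \tfrac{\pi}{2\ell}|z^{\ell}-1|$ (your Step~4, the paper's ``Claim'') to build a finite-valued function in $\M$ that pretends to be (a twist of) $f$, and then invoke \cref{C: finite valued non-pretentious is strongly aperiodic} together with the triangle inequality. The only cosmetic difference is in how the Dirichlet character is handled: the paper first replaces $\ell$ by a multiple so that the character becomes principal (reducing to $f^{\ell}\sim n^{it}$) and then applies the circle inequality once to $f(n)n^{-it/\ell}$, whereas you keep $\psi$ and factor $f=gh$ with $g$ an $\ell$-th root of $\psi$, then apply the circle inequality to $h$; this adds one step but buys nothing extra (and your stipulation that $g(p)$ be the \emph{closest} $\ell$-th root of $\psi(p)$ to $f(p)$ is never actually used, since the bound on $\D(h,h')$ only requires $g^{\ell}=\psi$).
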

\begin{proof}
Our assumption implies that there exist $t\in \R$ and a Dirichlet character $\chi$ such that $\D(f^{\ell}(n),\chi(n) n^{it})<+\infty$. By raising to an appropriate power (to make $\chi^{m}$ principal), we may assume that $\D(f^{\ell}(n),n^{it})<+\infty$.

\underline{Claim}: Given $\ell \in \N$ and $a\in \mathbb{S}^1$, there is $j\in 
\{0,1,\ldots, \ell-1 \}$ so that $|a-e(j/\ell)|\leq \frac{2\pi}{4 \ell} | 
a^{\ell}-1|.$
\begin{proof}[Proof of the Claim.]
Write $a=e(\theta)$ for some $\theta\in [0,1)$. Take $j\in \{0,1, \ldots, 
\ell-1\} $ so that $\norm{\theta- j/\ell}_{\T}= \min_{0\leq k \leq 
\ell -1} \norm{\theta - k/\ell}_{\T}$. 
Then it is not difficult to see that $\theta \in ((j-1)/\ell, j/ \ell]$
or $\theta \in [j/\ell, (j+1)/ \ell)$, where the intervals are taken $\pmod{1}$.
Without loss of generality, assume that $\theta \in ((j-1)/\ell, j/\ell].$ 
Then we have that $|a-e(j/\ell)|\leq 2 \pi 
\norm{\theta - j/\ell}_{\T}$.
The map $\psi \colon \T \to \T, \psi(x)=\ell x$, when restricted to 
$((j-1)/\ell, j/\ell]$ is invertible, and its inverse is Lipschitz 
continuous with Lipschitz
constant $1/\ell$, and therefore 
$\norm{\theta - j/\ell}_{\T}\leq \frac{1}{\ell} \norm{\ell \theta}_{\T}$.

We deduce that $|a-e(j/\ell)|\leq \frac{2\pi}{\ell} 
\norm{\ell \theta}_{\T}\leq \frac{\pi}{2 \ell}| e(\ell \theta) -1|=
\frac{\pi}{2 \ell}| a^{\ell} -1|$, where for the second inequality we use that 
for all $x\in \R$, $\norm{x}_{\T} \leq \frac{1}{4} |e(x)-1|. $ This concludes the 
proof of the Claim. 
\renewcommand\qedsymbol{$\triangle$}
\end{proof}

Using the Claim, we deduce that for each $p\in \P$, there exists $\theta_p \in 
\{0,1, \ldots, \ell-1\}$ so that $|f(p)p^{-it/\ell} -e(\theta_p/\ell)|
\leq \frac{\pi}{2\ell} |f^{\ell}(p)p^{-it} -1|$. Consider 
the completely multiplicative function $g$ defined on the primes by $g(p)=e(\theta_{p}/\ell)$.
Then 
\begin{align*}
    \D(f(n)n^{-it/\ell}, g(n))^2 &= \frac{1}{2} \sum_{p\in \P} 
    \frac{|f(p)p^{-it/\ell} -e(\theta_p/\ell)|^2}{p}
    \leq \frac{1}{2}
    \frac{\pi ^2}{4 \ell ^2} \sum_{p\in \P} 
    \frac{|f^{\ell}(p)p^{-it} -1|^2}{p}\\
    &= \frac{\pi ^2}{4 \ell ^2} 
    \D(f^{\ell}(n), n^{it})^2 <\infty.
\end{align*}
Therefore, $g$ pretends to be $f(n)n^{-it/\ell}$ and, thus it is 
non-pretentious. However, $g$ is also finite valued, so Corollary 
\ref{C: finite valued non-pretentious is strongly aperiodic} implies that it is 
strongly aperiodic. Therefore, $f(n)n^{-it/\ell}$, and thus $f$, is 
strongly aperiodic. Indeed, suppose there exist $B>0$ and an increasing 
sequence of integers $(N_k)_{k\in\N}$ such that $\inf_{|t'|\leq BN_k,
q\leq B} 
\D(f(n)n^{-it/\ell}, \chi(n)n^{it'}; 1, N_k)$ is bounded. Then, the 
pretentious triangle inequality implies that 
$\inf_{|t'|\leq BN_k, q\leq B} 
\D(g(n), \chi(n)n^{it'}; 1, N_k)$ will be bounded, which is a 
contradiction.
\end{proof}

Finally, we will use the following lemma, an easy consequence of the triangle inequality.
\begin{lemma}\label{L: powers of f pretentious implies gcd pretentious}
    Let $p,q$ be integers and let $f$ be a multiplicative function such that $f^{p}$ and $f^{q}$ are both pretentious. Then $f^d$ is pretentious, where $d=(p,q)$.
\end{lemma}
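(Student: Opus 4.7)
The plan is to reduce the problem to Bezout's identity combined with the pretentious triangle inequality. By hypothesis, there exist Dirichlet characters $\chi_1, \chi_2$ and real numbers $t_1, t_2$ such that $\D(f^p(n), \chi_1(n) n^{it_1}) < \infty$ and $\D(f^q(n), \chi_2(n) n^{it_2}) < \infty$. Writing $d = (p,q) = \alpha p + \beta q$ with $\alpha, \beta \in \Z$ via Bezout, I want to show that $f^d$ pretends to be the ``candidate'' function $\chi_1^\alpha \chi_2^\beta(n) \cdot n^{i(\alpha t_1 + \beta t_2)}$.

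First I would establish the auxiliary fact that for any multiplicative functions $h_1, h_2 \colon \N \to \mathbb{S}^1$ and integer $k$, one has $\D(h_1^k, h_2^k) \leq |k|\, \D(h_1, h_2)$. For $k \geq 0$ this follows by induction on $k$ via the stated triangle inequality $\D(f_1 f_2, g_1 g_2) \leq \D(f_1, g_1) + \D(f_2, g_2)$, and for $k < 0$ it reduces to the positive case since $h^{-1} = \bar h$ and a direct computation shows $\D(\bar h_1, \bar h_2) = \D(h_1, h_2)$ (the real part is unchanged under conjugation of both factors).

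Applying this with $k = \alpha$ to the pair $(f^p, \chi_1 n^{it_1})$ and $k = \beta$ to the pair $(f^q, \chi_2 n^{it_2})$, I obtain
\begin{equation*}
\D\bigl(f^{\alpha p}(n),\, \chi_1^\alpha(n) n^{i \alpha t_1}\bigr) \leq |\alpha|\, \D(f^p, \chi_1 n^{it_1}) < \infty,
\end{equation*}
and analogously for $\beta q$. A single further application of the triangle inequality then yields
\begin{equation*}
\D\bigl(f^{\alpha p + \beta q}(n),\, \chi_1^\alpha(n)\chi_2^\beta(n)\, n^{i(\alpha t_1 + \beta t_2)}\bigr) < \infty,
\end{equation*}
which is the desired pretentious comparison for $f^d$, since $\alpha p + \beta q = d$.

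The only minor subtlety, rather than an obstacle, is verifying that $\chi_1^\alpha \chi_2^\beta$ really is a Dirichlet character: if $\chi_1$ has modulus $q_1$ and $\chi_2$ has modulus $q_2$, then viewed modulo $\mathrm{lcm}(q_1, q_2)$ the product $\chi_1^\alpha \chi_2^\beta$ is a well-defined character (extended to be $0$ on integers not coprime to the modulus), with negative powers interpreted as complex conjugates on the unit circle. With this convention in place, the computation above produces a genuine pretentious approximation to $f^d$, so $f^d$ is pretentious and the lemma is proved.
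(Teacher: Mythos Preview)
Your proof is correct and follows essentially the same route as the paper: write $d$ as an integer combination of $p$ and $q$ via B\'ezout, use the triangle inequality to pass to powers, and combine. If anything, you are slightly more careful than the paper in handling the absolute values on the B\'ezout coefficients and in verifying that the resulting twist is a genuine Dirichlet character.
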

\begin{proof}
    Let $k,\ell\in\Z$ with $kp-\ell q=d$. If $\D(f^p(n),\chi_1(n)n^{it_1})<+\infty$ and $\D(f^q(n),\chi_2(n)n^{it_2})<+\infty$, then \begin{align*}
        \D(f^d(n),\chi_{1}^k(n)\overline{\chi_{2}^{\ell}(n)}n^{i(kt_1-\ell t_2)})&
        \leq \D(f^{kp}(n),\chi_1^k(n) n^{ikt_1})+\D(f^{\ell q}(n),\chi_2^{\ell}(n) n^{i\ell t_2})\\
        &\leq k\D(f^p(n),\chi_1(n)n^{it_1})+\ell\D(f^q(n),\chi_2(n)n^{it_2})<+\infty.
    \end{align*}
\end{proof}

\begin{corollary}\label{C: minimal power that makes f pretentious}
    Assume that $f$ is a multiplicative function such that $f^{k}$ is pretentious for some positive integer $k$. Then, there exists a positive integer $\ell_0$ such that $f^{\ell}$ is pretentious if and only if $\ell_0\mid \ell$.
\end{corollary}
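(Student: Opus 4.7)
The plan is to define $\ell_0$ as the smallest positive integer for which $f^{\ell_0}$ is pretentious. This is well-defined by the hypothesis that $f^k$ is pretentious for some $k \geq 1$. I would then verify the two directions of the biconditional separately.

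For the easy direction, assume $\ell_0 \mid \ell$, say $\ell = m\ell_0$. Since $f^{\ell_0}$ is pretentious, there exist a Dirichlet character $\chi$ and $t\in\R$ with $\D(f^{\ell_0}(n),\chi(n)n^{it})<+\infty$. Applying the triangle inequality for the pretentious distance $m$ times (as in the proof of \cref{L: powers of f pretentious implies gcd pretentious}), I get $\D(f^{\ell}(n),\chi^m(n)n^{imt})\leq m\,\D(f^{\ell_0}(n),\chi(n)n^{it})<+\infty$, so $f^{\ell}$ is pretentious.

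For the nontrivial direction, assume $f^{\ell}$ is pretentious and set $d=(\ell,\ell_0)$. Since both $f^\ell$ and $f^{\ell_0}$ are pretentious, \cref{L: powers of f pretentious implies gcd pretentious} yields that $f^{d}$ is pretentious as well. But $d\leq \ell_0$ and $d\geq 1$, so by the minimality of $\ell_0$ we must have $d=\ell_0$, i.e.\ $\ell_0\mid \ell$.

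The argument is essentially a one-liner given the preceding lemma; the only real content is choosing $\ell_0$ to be minimal and invoking the gcd lemma. There is no serious obstacle, since the triangle inequality for $\D$ and \cref{L: powers of f pretentious implies gcd pretentious} do all the work. One small point to be careful about is to note explicitly that $\ell_0$ exists (which uses the hypothesis) and that the pretentious distance respects products of functions (which is standard from the triangle inequality).
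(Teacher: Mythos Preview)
Your proposal is correct and follows exactly the same approach as the paper: define $\ell_0$ as the minimal exponent with $f^{\ell_0}$ pretentious, and for the nontrivial direction apply \cref{L: powers of f pretentious implies gcd pretentious} to $\ell$ and $\ell_0$ to force $(\ell,\ell_0)=\ell_0$. The paper's proof is terser (it omits the easy direction), but the argument is identical.
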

\begin{proof}
    Take $\ell_0$ to be the minimal positive integer such that $f^{\ell_0}$ is pretentious and apply the Lemma~\ref{L: powers of f pretentious implies gcd pretentious} for $\ell$ and $\ell_0$ to conclude that $\ell_0\mid \ell$.
\end{proof}

Finally, we will need a way to control the distances of aperiodic functions that are not necessarily strongly aperiodic. This can be done by passing to an appropriate subsequence. The central lemma we will need is the following lemma of Elliott \cite[Lemma 17]{Elliott_additive} which asserts that non-pretentious functions cannot pretend to be different twisted characters on all scales. The proof is a simple exercise in applying the pretentious triangle inequality, coupled with some estimates for the distances between twisted characters. For a generalization, see \cite[Lemma 5.3]{Klu-Man-Ter}.

\begin{lemma}
    Let $f$ be a $1$-bounded multiplicative function such that for all $X$, we have that \begin{equation*}
        \inf_{|t|\leq BX, q\leq B} \D(f(n),\chi(n)n^{it};1,X)
    \end{equation*}is bounded. Then, there exists $t\in \R$ and a Dirichlet character $\chi$ of modulus at most $B$, such that 
    $\D(f(n),\chi(n)n^{it})<+\infty$. 
\end{lemma}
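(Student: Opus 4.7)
The plan is to combine a pigeonhole argument with a compactness argument, powered by the pretentious triangle inequality together with a standard quantitative lower bound for $\D(1,n^{is};1,X)$. First, let $M>0$ be a bound on the hypothesized infimum. By assumption, for each $X$ we may select a Dirichlet character $\chi_X$ of conductor at most $B$ and a real $|t_X|\le BX$ with $\D(f,\chi_X(n)n^{it_X};1,X)\le M$. Since there are only finitely many Dirichlet characters of conductor at most $B$, the pigeonhole principle produces a single $\chi$ and a sequence $X_k\to\infty$ with $\chi_{X_k}=\chi$. Writing $t_k:=t_{X_k}$ and $g:=f\overline{\chi}$, we have $\D(g,n^{it_k};1,X_k)\le M$ for every $k$.

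For any $j\le k$, the pretentious triangle inequality together with the monotonicity of $\D(\cdot,\cdot;1,X)$ in $X$ gives
\begin{equation*}
    \D(1,n^{i(t_k-t_j)};1,X_j) \;\le\; \D(g,n^{it_j};1,X_j)+\D(g,n^{it_k};1,X_j) \;\le\; 2M.
\end{equation*}
The key step is to upgrade this uniform bound to genuine convergence of $(t_k)$. For this I would invoke the classical Granville--Soundararajan estimate (a consequence of the prime number theorem and Mertens' theorem): uniformly in $X\ge 10$ and $|s|\le X$,
\begin{equation*}
    \D(1,n^{is};1,X)^{2} \;=\; \log\bigl(1+|s|\log X\bigr)+O(1).
\end{equation*}
Combining this with the displayed bound, and separately handling the range $|t_k-t_j|>X_j$ via the same input (where $\D(1,n^{is};1,X)^2\gg \log\log X$ forces $j$ small), yields $|t_k-t_j|\log X_j\ll_{M}1$ for all $k\ge j$. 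In particular $(t_k)$ is Cauchy, so $t_k\to t$ for some $t\in\R$.

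To conclude, fix $Y\ge 1$ and choose any $k$ with $X_k\ge Y$. The triangle inequality and monotonicity give
\begin{equation*}
    \D(g,n^{it};1,Y)\;\le\;\D(g,n^{it_k};1,Y)+\D(1,n^{i(t-t_k)};1,Y)\;\le\; M+\D(1,n^{i(t-t_k)};1,Y).
\end{equation*}
For fixed $Y$ the last term is a finite sum over primes $p\le Y$ that depends continuously on $t_k$, and therefore tends to $0$ as $t_k\to t$. Hence $\D(g,n^{it};1,Y)\le M$ for every $Y$, which gives $\D(f,\chi(n)n^{it})\le M<\infty$. The principal obstacle in this outline is the quantitative estimate on $\D(1,n^{is};1,X)$ in the second paragraph: one must rule out the possibility that a bounded distance is achieved by some very large $|t_k-t_j|$ lying outside the regime $|s|\le X_j$. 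Once this technical point is resolved (as in Elliott's original argument or the refined version referenced after the statement), the Cauchy extraction and passage to the limit are routine.
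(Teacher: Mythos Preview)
Your outline follows the approach the paper sketches (it cites Elliott and describes the proof only as ``a simple exercise in applying the pretentious triangle inequality, coupled with some estimates for the distances between twisted characters''). The pigeonhole step, the bound $\D(1,n^{i(t_k-t_j)};1,X_j)\le 2M$ for $j\le k$, and the final limiting paragraph are all correct.

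The obstacle you flag is genuine, but your parenthetical suggestion for handling $|t_k-t_j|>X_j$ does not work: the lower bound $\D(1,n^{is};1,X)^2\gg\log\log X$ fails once $|s|$ exceeds any fixed power of $X$. Indeed, for fixed $X$ the map $s\mapsto\sum_{p\le X}(1-\cos(s\log p))/p$ is a finite sum of cosines with $\Q$-independent frequencies $\log p$, hence Bohr almost-periodic, and it returns arbitrarily close to $0$ for arbitrarily large $|s|$. Since after pigeonholing the subsequence $(X_k)$ can be arbitrarily sparse, the constraint $|t_k|\le BX_k$ places no useful bound on $|t_k-t_j|$ in terms of $X_j$, and the argument cannot proceed as written.

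The resolution is to compare neighbouring scales \emph{before} pigeonholing, using that the hypothesis holds for every $X$. For large $X$ and any $X'\in[X,X^2]$ one has $|t_X-t_{X'}|\le B(X+X')\le 2BX^2$, which lies in the range $|s|\le X^{O(1)}$ where the zero-free-region estimates are valid: they give $\D(\chi_1 n^{is_1},\chi_2 n^{is_2};1,X)^2\ge c\log\log X-O_B(1)$ when $\chi_1\overline{\chi_2}$ is non-principal, and your Granville--Soundararajan estimate when $\chi_1=\chi_2$. The first forces $\chi_X=\chi_{X'}$ for all such $X'$, so $\chi_X$ is eventually a fixed $\chi$ and no pigeonhole is needed; the second then gives $|t_X-t_{X'}|\ll_M 1/\log X$. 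Taking $X_k=X_0^{2^k}$ makes $\sum_k 1/\log X_k$ geometric, so $(t_{X_k})$ is Cauchy with limit $t$, and your final paragraph completes the proof.
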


The following corollary is straightforward.
    \begin{corollary}\label{C: locally pretentious on all scales implies pretentious}
    Let $B>0$ and let $f$ be a $1$-bounded non-pretentious multiplicative 
    function. Then, there exists a 
    sequence of integers $N_k\to \infty$, such that
    \begin{equation*}
        \lim_{k\to\infty} \inf_{|t|\leq BN_k,q\leq B} \D(f(n),\chi(n)n^{it};1,N_k)=+\infty.
    \end{equation*}
\end{corollary}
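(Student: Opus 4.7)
The strategy is simply to apply the contrapositive of the preceding lemma and then perform a routine diagonal extraction. Let $F(X) := \inf_{|t|\leq BX,\, q\leq B} \D(f(n),\chi(n)n^{it};1,X)$, where as in the corollary the infimum is over Dirichlet characters $\chi$ of conductor at most $B$. The preceding lemma states that if $F(X)$ stays bounded as $X$ varies, then there exist $t\in\R$ and a Dirichlet character $\chi$ of modulus at most $B$ with $\D(f(n),\chi(n)n^{it})<+\infty$. Since by assumption $f$ is non-pretentious, no such $\chi$ and $t$ exist, hence the supremum $\sup_{X\in\N} F(X)$ must equal $+\infty$.

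Next I will upgrade ``unboundedness over all $X$'' to ``unboundedness along a sequence $N_k\to\infty$''. For each fixed $X$, the trivial choice $t=0$ and $\chi$ the trivial character gives the bound $F(X)\leq \D(f(n),1;1,X)\leq \bigl(\sum_{p\leq X} 2/p\bigr)^{1/2}<\infty$, so $F$ is finite at every point and therefore bounded on any finite initial segment $[1,M]\cap\N$. Combined with $\sup_X F(X)=+\infty$, this lets me construct the desired sequence inductively: having chosen $N_1<\cdots<N_{k-1}$, the restriction of $F$ to $[1,N_{k-1}]$ is bounded by some constant, so there must exist $N_k>N_{k-1}$ with $F(N_k)>k$. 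The resulting sequence satisfies $N_k\to\infty$ and $F(N_k)\to+\infty$, which is exactly the required conclusion.

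I do not anticipate any genuine obstacle; the entire content lies in the contrapositive of the previous lemma, and the extraction is purely formal since $F$ takes finite values at every integer. The only minor care needed is to make sure the diagonal argument uses the fact that $F$ is finite pointwise (not merely measurable or similar), which is immediate from the pretentious-distance estimate above.
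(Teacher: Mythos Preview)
Your proposal is correct and matches the paper's intended argument: the paper simply states that the corollary is ``straightforward'' from the preceding lemma of Elliott, and your contrapositive-plus-extraction is exactly the routine verification the authors have in mind. The only content is indeed the lemma, and your care in noting that $F$ is pointwise finite (so that unboundedness forces $\limsup_{X\to\infty} F(X)=+\infty$) is the right observation to make the extraction go through.
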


\subsection{Some asymptotic estimates for multiplicative functions}

The following lemma is a fairly standard consequence of \eqref{E: limit of multiplicative Folner is 1}.

\begin{lemma}\label{L: multiplicative averages}
    Let $f$ be a bounded completely multiplicative function. Then, \begin{equation*}
     (1-f(p))   \mathlarger{\cesE}_{n\in \Phi_K} f(n) \ll 2^{-K}
    \end{equation*}
    holds for any prime $p$ dividing all the numbers in $\Phi_K$, where the implied constant is absolute.
\end{lemma}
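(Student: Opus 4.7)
The plan is to exploit the product structure of $\Phi_K$ directly, using the fact that $f$ is completely multiplicative to factor the sum over $n\in\Phi_K$ into a product of one-dimensional geometric sums, and then observe that multiplication by $(1-f(p))$ telescopes the geometric sum corresponding to the prime $p$.

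First, since every element of $\Phi_K$ is of the form $p_1^{\theta_1}\cdots p_K^{\theta_K}$, any prime $p$ dividing \emph{all} elements of $\Phi_K$ must be one of $p_1,\ldots,p_K$; write $p=p_j$. Next, I would observe that bounded plus completely multiplicative forces $|f(q)|\le 1$ on every prime $q$ (otherwise $|f(q^k)|=|f(q)|^k$ would blow up), so $f$ is $1$-bounded on all of $\N$. Using complete multiplicativity, the sum over $\Phi_K$ factors:
\begin{equation*}
\sum_{n\in\Phi_K} f(n) \;=\; \prod_{i=1}^{K}\,\sum_{\theta_i=2^K+1}^{2^{K+1}} f(p_i)^{\theta_i}.
\end{equation*}

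For each $i\ne j$, the inner sum has $2^K$ terms of modulus at most $1$, so it is bounded by $2^K$. For $i=j$, I would multiply the geometric sum by $1-f(p)$; if $f(p)=1$ the statement is trivial, and otherwise the sum telescopes:
\begin{equation*}
(1-f(p))\sum_{\theta=2^K+1}^{2^{K+1}} f(p)^{\theta} \;=\; f(p)^{2^K+1}-f(p)^{2^{K+1}+1},
\end{equation*}
which is of modulus at most $2$. Since $|\Phi_K|=2^{K\cdot K}=2^{K^2}$, combining these bounds gives
\begin{equation*}
\bigabs{(1-f(p))\,\mathlarger{\cesE}_{n\in\Phi_K}f(n)} \;\le\; \frac{2\cdot (2^K)^{K-1}}{2^{K^2}} \;=\; 2\cdot 2^{-K},
\end{equation*}
with an absolute implied constant, which is the claimed bound. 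No real obstacle is anticipated; the whole argument is a one-line factorization followed by a telescoping identity, and the key observation is simply that the range $2^K<\theta_i\le 2^{K+1}$ contains exactly $2^K$ consecutive integers, so that the geometric sum for the $j$-th coordinate collapses after multiplication by $1-f(p)$.
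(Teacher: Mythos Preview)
Your proof is correct. The paper does not spell out a proof but only remarks that the lemma is ``a fairly standard consequence'' of the multiplicative F{\o}lner property $|\Phi_K\cap\Phi_K/p|/|\Phi_K|\to 1$; your factorization into one-dimensional geometric sums followed by telescoping is precisely what makes that F{\o}lner computation explicit for this particular $\Phi_K$, and yields the same constant $2\cdot 2^{-K}$.
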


We deduce that if a multiplicative function is not identically $1$,
then its averages along $(\Phi_K)_{K\in \N}$ vanish.

A key idea in our proof strategy is to consider averages along a suitable arithmetic progression, where the steps will be smooth numbers from $\Phi_K$.
This is because we can approximate pretentious multiplicative functions by simpler expressions using the following concentration estimate.

\begin{lemma}[{Cf. \cite[Lemma 2.5]{Klu-Man-Po-Ter}}]
\label{L: concentration estimate}
    Let $f\colon \N \to \mathbb{U}$ be multiplicative and let $\chi$ be a Dirichlet character of conductor $q$. Let $Q,K$ be positive integers, such that $\prod_{j\leq K} p_j |Q $ and $q|Q$. Then, for any $a\in \N$ coprime to $Q$ and any $t\in \R$, we have 
    \begin{multline}\label{E: concentration estimate}
        \sum_{n\leq X}\frac{1}{n}\left|f(Qn+a) -\chi(a)(Qn)^{it}\exp(F(Q,X))\right| \\
        \ll \log X \left(\mathbb{D}(f,\chi(n)n^{it}; p_K, X)+\frac{1}{\sqrt{p_K}} + \oh_{a,t,Q; X\to \infty}(1)\right)
    \end{multline}
    where \begin{equation}\label{E: oscillatory term}
        F(Q,X)=\sum_{\substack{p\leq X\\
        p\nmid Q}} \frac{f(p)\overline{\chi(p)}p^{-it} -1}{p}.
    \end{equation}
We note here that the implicit constant in \eqref{E: concentration estimate}
is absolute.
\end{lemma}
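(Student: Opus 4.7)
The strategy is to reduce the problem to a multiplicative function pretending to be $1$, and then invoke a second-moment concentration argument in the spirit of \cite[Lemma~2.5]{Klu-Man-Po-Ter}.

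Set $g(n) := f(n)\overline{\chi(n)}n^{-it}$ (with the convention $g(p):=0$ for $p\mid q$), so that $g\colon\N\to\mathbb{U}$ is a $1$-bounded multiplicative function with $\D(g,1;p_K,X) = \D(f,\chi(n)n^{it};p_K,X)$. Since $q\mid Q$ and $(a,Q)=1$, we have $\chi(Qn+a)=\chi(a)$, so
\[f(Qn+a) = \chi(a)(Qn+a)^{it}g(Qn+a).\]
Writing $(Qn+a)^{it} = (Qn)^{it}(1+O(|t|a/(Qn)))$, the resulting error summed against $1/n$ contributes $O(|t|a/Q) = \oh_{a,t,Q;X\to\infty}(\log X)$. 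Hence the lemma reduces to bounding
\[S := \sum_{n\leq X}\frac{1}{n}\,\bigl|g(Qn+a) - \exp(F(Q,X))\bigr|.\]

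The key structural observation is that the hypothesis $\prod_{j\leq K}p_j\mid Q$, combined with $(a,Q)=1$, forces every $m=Qn+a$ to be $p_K$-rough, so $g(m)$ depends only on the values $g(p)$ with $p>p_K$. Applying Cauchy--Schwarz,
\[S^{2}\leq \log X\cdot\sum_{n\leq X}\frac{|g(Qn+a)-\exp(F(Q,X))|^{2}}{n},\]
and expanding the square, the task reduces to evaluating the three logarithmic means $\sum_n |g(Qn+a)|^{2}/n$, $\sum_n g(Qn+a)/n$, and its conjugate. Each is handled via the Granville--Soundararajan mean-value theorem for pretentious multiplicative functions along the AP $Qn+a$, combined with the orthogonality expansion of $\mathbf{1}_{a\pmod{Q}}$ used in Corollary~\ref{C: mean value does not depend on progression}; the $p_K$-rough restriction permits $F(Q,X)$ to be harmlessly truncated to primes exceeding $p_K$, introducing a Mertens-type discrepancy of order $O(p_K^{-1/2})$.

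After collecting contributions, the diagonal piece of the variance is of size $\D(g,1;p_K,X)^{2}\log X$, the off-diagonal piece is of size $p_K^{-1}\log X$, and the residual errors (arising from approximating $\sum_n g(Qn+a)/n$ by $\exp(F(Q,X))$ in the Granville--Soundararajan step) are $\oh_{a,t,Q;X\to\infty}(\log X)$. Taking square roots delivers the claimed bound. The main obstacle is the Granville--Soundararajan-type mean-value step along the AP $Qn+a$: it must be sharp enough to retain the factor $\exp(-\D(g,1;p_K,X)^{2}/2)$ in the pretentious regime, with error depending on $a,t,Q$ only through a vanishing $X\to\infty$ term. This is precisely the content of the Klurman--Mangerel--Pohoata--Teräväinen concentration lemma, whose argument we adapt; the hypothesis $q\mid Q$ ensures the character's conductor is absorbed cleanly by the step of the progression, bypassing the ramification issues that would otherwise obstruct this step.
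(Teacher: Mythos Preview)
The paper does not give its own proof: it simply cites \cite[Lemma~2.5]{Klu-Man-Po-Ter} and remarks that the same argument works with logarithmic averages once one replaces the classical Tur\'an--Kubilius inequality by its logarithmic analogue. So the benchmark here is ``Tur\'an--Kubilius applied to an additive approximation of $g$.''

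Your initial reductions are correct and standard: passing to $g(n)=f(n)\overline{\chi(n)}n^{-it}$, using $q\mid Q$ to get $\chi(Qn+a)=\chi(a)$, approximating $(Qn+a)^{it}$ by $(Qn)^{it}$, and noting that $m=Qn+a$ is $p_K$-rough. The Cauchy--Schwarz step to pass to a second moment is also how the KMPT argument begins.

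The gap is in how you propose to bound the second moment. You expand $|g(Qn+a)-E|^2$ and reduce to computing $\sum_n g(Qn+a)/n$ and $\sum_n |g(Qn+a)|^2/n$ via ``Granville--Soundararajan mean-value theorems.'' But Hal\'asz-type results furnish only \emph{upper bounds} for such means, and Wirsing/Delange-type asymptotics do not come with the $O(\D^2)$ relative error you would need in $\sum_n g(Qn+a)/n \approx E\log X$ for the cross terms to cancel down to $\D^2\log X$. Obtaining the first moment of $g$ along the progression to that precision is essentially equivalent to the lemma you are trying to prove, so invoking ``the KMPT argument'' at exactly this point is circular.

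The actual KMPT mechanism is different: one writes $g(m)\approx \exp\bigl(\sum_{p\mid m}(g(p)-1)\bigr)$ for $p_K$-rough $m$ (the approximation error from higher prime powers and from $|g(p)-1|^2$ terms is where the $1/\sqrt{p_K}$ enters), and then applies the (logarithmic) Tur\'an--Kubilius inequality directly to the \emph{additive} function $h(m)=\sum_{p\mid m}(g(p)-1)$. Its mean over $m\equiv a\pmod Q$, $m\leq QX$, is $F(Q,X)+o(1)$ and its variance is $\sum_{p_K<p\leq X}|g(p)-1|^2/p = 2\,\D(g,1;p_K,X)^2$, which yields the claimed bound immediately after Cauchy--Schwarz. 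In short: the second-moment step should go through an additive-function variance (Tur\'an--Kubilius), not through multiplicative mean-value asymptotics.
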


The result in \cite{Klu-Man-Po-Ter} uses Ces\'{a}ro averages, but the same proof works with logarithmic averages with a logarithmic version of the Tur\'{a}n-Kubilius inequality.

The previous lemma is useful whenever the term $\mathbb{D}
(f(n),\chi(n)n^{it},K,X )$ is small and, in particular, for pretentious 
functions $f$ (with the appropriate choice of $\chi$ and $t$).

Finally, we will need the following result of Tao on the two-point correlations of multiplicative functions. 
\begin{customthm}{A}{\cite[Theorem 1.3]{Tao-Chowla}}\label{T: Tao theorem}
Let $a_1,a_2,b_1,b_2$ be natural numbers such that $a_1b_2-a_2b_1\neq 0$. Let $\e>0$ and suppose that $B$ is sufficiently large depending on $\e,a_1,a_2,b_1,b_2$.
Let $f,g\colon\N\to\C$ be $1$-bounded multiplicative functions. Then, for any $X\geq B$, if $$\mathbb{D}(f(n),\chi(n)n^{it}; 1,X)\geq B$$
for all Dirichlet characters of modulus at most $B$
and all real numbers $|t|\leq BX$, then
$$\left|\mathlarger{\logE}_{n\leq X}
f(a_1n+b_1)g(a_2n+b_2)\right|\leq\varepsilon.$$
\end{customthm}

This result illustrates the necessity to consider strongly aperiodic functions as separate from general aperiodic functions. In fact, it is known that there are examples of non-pretentious functions \cite{MRT_averaged} (see also \cite{Klu-Man-Ter}),
which locally pretend to be $n^{it_k}$ on a sequence of scales $X_k\to \infty$ and a fast-growing sequence of positive real numbers $(t_k)_{k\in\N}$. These functions are non-pretentious and satisfy \begin{equation*}
    \lim\limits_{k\to \infty} \mathlarger{\logE}_{n\leq X_k}f(a_1n+b_1)g(a_2n+b_2)=1.
\end{equation*}

Since these functions are non-pretentious, one can show using Corollary \ref{C: locally pretentious on all scales implies pretentious} and Theorem \ref{T: Tao theorem} that there is a subsequence along which the averages are very small. We use this crucially in the proof of \ref{T: optimal conditions for Bohr recurrence}. However, when one is dealing with several non-pretentious functions at the same time, it is not clear that one can find a subsequence that works for all functions simultaneously. This is the main reason that we have to impose the finitely generated assumption in Theorem \ref{T: multiplicative recurrence for finitely generated systems}.

\section{Optimal conditions for multiplicative recurrence}\label{section_optimal-conditions}

In this section, we prove the implication $(2)\implies (3)$ of Theorem \ref{T: optimal conditions for Bohr recurrence}, i.e., we prove that for 
$a,c \in \N$ and $b,d \in \Z$ with $(a,b,c,d)=1$, if for every $f\in \M$,
\begin{equation}\label{liminf f-f}
    \liminf_{n\to \infty} |f(an+b)-f(cn+d)|=0,
\end{equation}
then we must have $a=c$, and $b=d$ or $a\mid bd$.

First, we prove that given two sequences $(p_n)_{n\in \N}, (q_n)_{n\in \N}$
of natural numbers, if the set $R=\left\{ {p_n}/{q_n} \colon n\in 
\N\right \}$ is a set of multiplicative recurrence, then for all $f\in \M$, 
$$\liminf_{n\to \infty} |f(p_n)-f(q_n)|=0.$$
This implies that for the set 
$S=\left\{ (an+b)/(cn+d) \colon n\in \N\right \}$ to be a set 
of multiplicative recurrence, we must have $a=c$, and 
$b=d$ or $a\mid bd$.
We conjecture that these assumptions on $a,b,c$ are also 
sufficient for multiplicative recurrence for general systems.

\begin{lemma}\label{L: recurrence implies Bohr recurrence}
Let $(p_n)_{n\in \N}, (q_n)_{n\in \N}$ be two sequences of natural numbers.
If $S=\left\{ {p_n}/{q_n} \colon n\in 
\N\right \}$ is a set of multiplicative recurrence, then for all $f\in \M$ 
\begin{equation}\label{eq rec implies bohr}
    \liminf_{n\to \infty} |f(p_n)-f(q_n)|=0.
\end{equation}
\end{lemma}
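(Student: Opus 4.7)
The plan is to build, for each $f \in \M$, a tailored multiplicative system in which recurrence of $S$ is equivalent to the $\liminf$ condition. Specifically, let $X = \mathbb{S}^1$, equipped with its Borel $\sigma$-algebra $\mathcal{B}$ and Haar (arc-length) measure $\mu$, and for every $n \in \N$ define $T_n \colon X \to X$ by $T_n(z) = f(n)\,z$. Since $f$ takes values in $\mathbb{S}^1$, each $T_n$ is a rotation, hence invertible with $T_n^{-1}(z) = \overline{f(n)}\,z$ and measure-preserving. Complete multiplicativity of $f$ gives $T_n \circ T_m(z) = f(n)f(m)z = f(nm)z = T_{nm}(z)$ for every $z$, so $(X, \mathcal{B}, \mu, (T_n)_{n\in\N})$ is a multiplicative system in the sense of the definition in the paper.

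Now fix an arbitrary $\e>0$ and let $A_\e \subset \mathbb{S}^1$ be the open arc centered at $1$ of arc-length $\e$, so that $\mu(A_\e) > 0$. A direct computation gives
\begin{equation*}
T_{p_n}^{-1} A_\e \cap T_{q_n}^{-1} A_\e \;=\; \overline{f(p_n)}\,A_\e \;\cap\; \overline{f(q_n)}\,A_\e,
\end{equation*}
which is the intersection of two arcs of length $\e$ centered at $\overline{f(p_n)}$ and $\overline{f(q_n)}$. An elementary geometric observation shows that such an intersection has positive Haar measure if and only if $|\overline{f(p_n)} - \overline{f(q_n)}| < C\e$ for some absolute constant $C>0$, equivalently $|f(p_n) - f(q_n)| < C\e$.

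Since by assumption $S$ is a set of multiplicative recurrence, it is a set of recurrence for the system just constructed, so there are infinitely many $n \in \N$ with $\mu(T_{p_n}^{-1} A_\e \cap T_{q_n}^{-1} A_\e) > 0$, and hence infinitely many $n$ with $|f(p_n) - f(q_n)| < C\e$. Letting $\e$ run through a sequence tending to $0$ and using a diagonal extraction yields a subsequence $n_k \to \infty$ along which $|f(p_{n_k}) - f(q_{n_k})| \to 0$, which proves \eqref{eq rec implies bohr}.

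There is no real obstacle in this argument; the substantive content is the construction of the rotation system on $\mathbb{S}^1$ attached to $f$, and the only routine verifications are that (i) the $T_n$ are invertible, measure-preserving, and satisfy $T_n \circ T_m = T_{nm}$, and (ii) the quantitative relation between the measure of the intersection of two arcs and the distance between their centers, both of which I would state with explicit constants but not expand further.
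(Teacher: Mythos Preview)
Your proof is correct and follows essentially the same approach as the paper: both construct the rotation system $T_n z = f(n)z$ on $\mathbb{S}^1$ with Haar measure, apply the recurrence hypothesis to a short arc, and deduce that $|f(p_n)-f(q_n)|$ is small infinitely often. The only cosmetic difference is that the paper picks a point in the nonempty intersection and bounds the diameter of the arc directly, whereas you phrase it via the geometry of intersecting arcs; the diagonal extraction at the end is unnecessary (infinitely many $n$ with $|f(p_n)-f(q_n)|<C\e$ for every $\e>0$ already gives $\liminf=0$), but harmless.
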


\begin{proof}
Let $\e>0$ and $f\colon \N \to \mathbb{S}^1$ be a completely multiplicative 
function. Consider the unit circle with the Borel 
$\sigma$-algebra and the Lebesgue measure, and the multiplicative 
system on this space generated by the transformations 
$T_n\colon \mathbb{S}^1 \to \mathbb{S}^1$, $T_nz=f(n)z$ for every 
$n\in \N$ and $z\in \mathbb{S}^1$.

Let $A$ be any arc of length $\e/2$ in $\mathbb{S}^1$. Observe that 
$\m(A)>0$ and since $R$ is a set of multiplicative recurrence, there exists
a sequence $n_k \to \infty$ so that for 
all $k\in \N$, $$\m(T^{-1}_{p_{n_k}}A\cap T^{-1}_{q_{n_k}}A)>0.$$
From the previous we get that for all $k\in \N$ there exists $z_k\in 
\mathbb{S}^1$ such that $T_{p_{n_k}}z_k\in A$ and $T_{q_{n_k}}z_k\in A$. 
By the choice of $A$, this implies that $|T_{p_{n_k}}z_k-
T_{q_{n_k}}z_k|< \e$, or equivalently 
\begin{equation*}
    \left|f(p_{n_k})-f(q_{n_k})\right|<\e
\end{equation*}
for all $k\in \N$. This concludes the proof.
\end{proof}

Given $a,c\in \N$, $b,d \in \Z$ we want to find necessary conditions on these parameters so that the set
$S=\big\{(an+b)/(cn+d) \colon n\in \N\big\}$ is a set of multiplicative recurrence.
In view of Lemma \ref{L: recurrence implies Bohr recurrence}, it suffices to examine under what conditions \eqref{liminf f-f} holds for the quadruple $(a,b,c,d)$. 
These will arise by considering \eqref{liminf f-f} for modified characters. Here, a \emph{(twisted) modified character} is a multiplicative function $f\in \M$, such that there exist $t\in \R$ and a Dirichlet character $\chi$ such that 
$f(p)=\chi(p)p^{it}$ for all primes $p$ with $\chi(p)\neq 0$. 
We know that $\chi(p)=0$ for at most finitely many primes, so by  
modifying the values of $\chi$ at those primes we get a function 
$\wt{\chi}\in \M$ with $f(p)=\wt{\chi}(p)p^{it}$ for all $p\in \P$.

The first condition is that we must have $a=c$. This condition was already observed in \cite[Proposition 3.2]{Moreira&friends}. We provide a different proof for convenience.
\begin{proposition}\label{P: a=c}
    Let $a,c\in \N$, $b,d \in \Z$ and assume that \eqref{liminf f-f} holds 
    for all $f\in \M$. Then $a=c$.
\end{proposition}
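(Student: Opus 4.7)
The plan is to prove the contrapositive: if $a\neq c$, then I will exhibit a completely multiplicative function $f\in\M$ for which $|f(an+b)-f(cn+d)|$ stays bounded away from $0$ as $n\to\infty$. The natural candidates are the archimedean characters $f(n)=n^{it}$ for $t\in\R$: these lie in $\M$ since $|n^{it}|=1$ and $(mn)^{it}=m^{it}n^{it}$, and they are the paradigmatic example used elsewhere in the paper (the class $\chi(n)n^{it}$) of obstructions to \eqref{eq res 1}.

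Concretely, I would take $f(n)=n^{it}$ for a real parameter $t$ to be chosen. Since $|n^{it}|=1$, for all $n$ large enough that both $an+b$ and $cn+d$ are positive, I can write
\begin{equation*}
    |f(an+b)-f(cn+d)|=|(an+b)^{it}-(cn+d)^{it}|=\left|\left(a+\tfrac{b}{n}\right)^{it}-\left(c+\tfrac{d}{n}\right)^{it}\right|,
\end{equation*}
by factoring out $n^{it}$ (which has modulus $1$). Continuity of $x\mapsto x^{it}$ on $(0,\infty)$ then yields
\begin{equation*}
    \lim_{n\to\infty}|f(an+b)-f(cn+d)|=|a^{it}-c^{it}|.
\end{equation*}

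It remains to pick $t$ so that $a^{it}\neq c^{it}$. Since $a\neq c$ we have $\log a\neq \log c$, and $a^{it}=c^{it}$ precisely when $t(\log a-\log c)\in 2\pi\Z$; this is a discrete set, so almost any $t$ works (for instance $t=\pi/(\log a-\log c)$). For such $t$ we obtain $\liminf_{n\to\infty}|f(an+b)-f(cn+d)|=|a^{it}-c^{it}|>0$, contradicting the assumption that \eqref{liminf f-f} holds for all $f\in\M$.

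There is no real obstacle here: the only subtle point is ensuring $an+b$ and $cn+d$ are positive so that $(an+b)^{it}$ is unambiguously defined, but this is automatic for $n$ large. The essence of the argument is the observation that the archimedean characters $n^{it}$ separate the multiplicative behaviour at the parameters $a$ and $c$, which is why no arithmetic hypothesis beyond $a\neq c$ is needed.
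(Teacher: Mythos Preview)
Your proof is correct and follows essentially the same approach as the paper: both choose $f(n)=n^{it}$ and reduce $|(an+b)^{it}-(cn+d)^{it}|$ to $|a^{it}-c^{it}|+o(1)$, then select $t$ so that $(a/c)^{it}=-1$. The only cosmetic difference is that you factor out $n^{it}$ and invoke continuity of $x\mapsto x^{it}$, whereas the paper writes $(u_1n+u_2)^{it}=(u_1n)^{it}+O_{t,u_1,u_2}(1/n)$; these are equivalent routes to the same limit.
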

\begin{proof}
    We pick $f(n)=n^{it}$ and use the fact that $(u_1n+u_2)^{it}=(u_1n)^{it}+\Oh_{t,u_1,u_2}(1/n)$ for all integers $u_1,u_2$. We conclude that 
    \begin{align*}
        |f(an+b)-f(cn+d)|&=|(an+b)^{it}-(cn+d)^{it}|\\
        &=|(an)^{it}-(cn)^{it}|+\oh_{n\to \infty}(1)=|(ac^{-1})^{it}-1|+\oh_{n\to \infty}(1).
    \end{align*}
    If $a\neq c$, then there is $t$ so that $(ac^{-1})^{it}=-1$, and using
    the previous we see that \eqref{liminf f-f} cannot hold for
    $f(n)=n^{it}$, which is a contradiction.
\end{proof}
Hence, from now on, we assume that $a=c$, and we study the set 
$$S=\left\{\frac{an+b}{an+d} \colon n\in \N\right\}$$
with $(a,b,d)=1$. If $b=d$, \eqref{liminf f-f} is satisfied trivially, and 
also $S=\{1\}$, which is trivially a set of multiplicative recurrence. 
Therefore, we also assume that $b\neq d$.

\begin{proposition}\label{P: optimal conditions}
    Let $a\in \N$, $b,d \in \Z$ with $(a,b,d)=1$, $b\neq d$
    and assume that $a$ does not divide $bd$. Then, there is a function $f\in\M$ for which \eqref{liminf f-f} fails.
\end{proposition}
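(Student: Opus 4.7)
The plan is to construct the counterexample $f\in\M$ explicitly by exploiting the $p$-adic structure at a single prime. The hypothesis $a\nmid bd$ (which already forces $b,d\neq 0$) is equivalent to the existence of a prime $p$ with $v_p(a)>v_p(b)+v_p(d)$. I would fix such a $p$ and set $\alpha:=v_p(a)$, $\beta:=v_p(b)$, $\delta:=v_p(d)$; the inequality $\alpha>\max(\beta,\delta)$ then forces $v_p(an+b)=\beta$ and $v_p(an+d)=\delta$ for \emph{every} $n\in\N$. This $n$-uniformity of the $p$-adic valuations is what drives the whole construction, and the argument splits according to whether $\beta=\delta$ or not.

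When $\beta\neq\delta$, I would take $f$ to be the completely multiplicative function defined by $f(p)=\lambda$ and $f(q)=1$ for all other primes, for some $\lambda\in\mathbb{S}^1$ with $\lambda^{\beta-\delta}\neq 1$. The preliminary observation gives $f(an+b)=\lambda^\beta$ and $f(an+d)=\lambda^\delta$, both constant in $n$, and their difference is a fixed nonzero number.

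When $\beta=\delta$, the quantity $\gamma:=v_p(b-d)$ is finite (since $b\neq d$) and at least $\beta$. Writing $b=p^\beta b'$, $d=p^\beta d'$, $a=p^\alpha a'$ with $b',d',a'$ coprime to $p$, I would set $M:=p^{\gamma-\beta+1}$, choose a suitable Dirichlet character $\chi$ mod $M$, and define $f(p)=1$, $f(q)=\chi(q)$ for $q\neq p$. Setting $A_n:=(an+b)/p^\beta$ and $B_n:=(an+d)/p^\beta$, the central calculation is that
\[
\frac{A_n}{B_n}\equiv 1+p^{\gamma-\beta}\,e'\,(d')^{-1}\pmod{M},
\]
where $e':=(b'-d')/p^{\gamma-\beta}$, so that $A_n/B_n$ equals a single constant $g\in(\Z/M)^*$ \emph{independent of $n$}. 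The reason: $B_n\equiv d'\pmod p$ (using $\alpha-\beta\geq 1$), and since one multiplies by $p^{\gamma-\beta}$ modulo $M=p\cdot p^{\gamma-\beta}$, only the mod-$p$ reduction of $e'B_n^{-1}$ survives. Because $e'(d')^{-1}\not\equiv 0\pmod p$, one has $g\neq 1$, and then picking $\chi$ mod $M$ with $\chi(g)\neq 1$ (which exists by the standard character theory of finite abelian groups) gives $|f(an+b)-f(an+d)|=|1-\chi(g)|>0$, constant in $n$.

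The easy part is Case~1 and the sub-subcase $\gamma=\beta$ (where the argument collapses to separating $b'$ and $d'$ modulo $p$). The hard part will be the subcase $\gamma>\beta$: individually, the residues of $an+b$ and $an+d$ modulo $M$ genuinely depend on $n$, so it is not a priori clear that any single $\chi$ could distinguish them for all $n$ simultaneously. The crucial trick is to pass to the \emph{ratio} $A_n/B_n$ instead of the individual values; this ratio lands in the kernel of $(\Z/M)^*\twoheadrightarrow(\Z/p^{\gamma-\beta})^*$, a cyclic group of order $p$ (or of order $2$ in the edge case $p=2$, which always falls in $\gamma\geq\beta+1$ since the difference of two odd numbers is even), and turns out to be $n$-independent once one exploits $\alpha\geq\beta+1$. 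Once this constancy is in place, the existence of a distinguishing $\chi$ is immediate.
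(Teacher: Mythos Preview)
Your proof is correct and follows the same overall strategy as the paper: pick a prime $p$ with $v_p(a)>v_p(bd)$ and build $f$ as a modified Dirichlet character at $p$. Your Case~1 ($\beta\neq\delta$) matches the paper's case $\ell>0$ almost verbatim. The difference lies in Case~2 ($\beta=\delta=0$): the paper splits further into $p>2$ and $p=2$, using for odd $p$ the cyclicity of $(\Z/p^u)^\times$ to get an \emph{injective} character (so that $\chi(an+b)\neq\chi(an+d)$ follows merely from $an+b\not\equiv an+d$), and reserving the ratio trick $\chi(an+b)/\chi(an+d)=\chi(b)/\chi(d)$ for $p=2$ where cyclicity fails. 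You instead run the ratio argument uniformly in $p$, showing that $A_n/B_n$ is a fixed element $g\neq1$ of $(\Z/p^{\gamma+1})^\times$ and then invoking only the existence of \emph{some} character with $\chi(g)\neq1$. This buys you a cleaner, case-free treatment and even yields that $|f(an+b)-f(an+d)|$ is literally constant in $n$, whereas the paper's odd-$p$ argument only gives a uniform lower bound $2\pi/\phi(p^u)$. One small remark: your general formulas with $\beta>0$ in Case~2 are harmless but vacuous, since $(a,b,d)=1$ and $p\mid a$ force $\min(\beta,\delta)=0$, hence $\beta=\delta=0$ there.
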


\begin{proof}
Since $a\nmid bd$, there is 
$p\in \P$ such that $p^{k} \parallel a$, $p^{\ell} \parallel bd$ and $k>\ell$.

First of all, we consider the case $\ell=0$.
Then, $p|a$, $p \nmid b$ and $p \nmid d$. 

If $p>2$, we
take $u \in \N$ such
that $b \not \equiv d \pmod{p^u} $. As $p$ is an odd prime, the multiplicative
group $\Z_{p^u} ^{\times}$ is cyclic. Let $g$ be a generator. Then 
for each $m\in \Z_{p^u} ^{\times}$, there is a unique $r\in \{0,1, \dots, \phi(p^u)-1\} $ so that $m=g^r$, where $\phi$ denotes
Euler's totient function. Consider the Dirichlet character 
$\chi \mod p^u$ defined by $\chi(g^r) = e(r/\phi(p^u))$. 
For any distinct $r_1, r_2 \in \{0,1,\ldots, \phi(p^u)-1\}$, we have $|e(r_1/\phi(p^u))-e(r_2/\phi(p^u)) |\geq 2\pi/\phi(p^u)$.
Then for each $n\in \N$, we have $an+b \not \equiv an+d \pmod{ p^u}$ and 
$(an+b, p^u)=(an+d, p^u)=1$. We infer that $\chi(an+b), \chi(an+d)\in 
\{e(r/\phi(p^u))\colon r \in \{0,1, \dots, \phi(p^u)-1\}\}  $ and 
$\chi(an+b)\neq \chi(an+d)$. Thus, $|\chi(an+b)-\chi(an+d)|\geq 2\pi/\phi(p^u)$. 
Now, consider the modified Dirichlet character $\wt{\chi}$ defined on 
the primes by
\begin{equation*}
    \wt{\chi}(q)=\begin{cases}
        \chi(q),& q\neq p\\
        1,& q=p
    \end{cases}.
\end{equation*}
For each $n\in \N$, $p \nmid an +b ,an+d$. Thus, $\wt{\chi}(an+b)= 
\chi(an+b)$ and $\wt{\chi}(an+d)= \chi(an+d)$, from which 
we conclude that 
$$\liminf_{n \to \infty} |\wt{\chi}(an+b)-
\wt{\chi}(an+d)| \geq 2\pi/\phi(p^u) >0. $$

On the other hand, if $p=2$ then $2|a, 2\nmid b$ and $2\nmid d$. The numbers $b, d$ are odd, so $2 \mid (b-d)$. Consider the minimal number $u \in \N$ so that 
$2^ u \nmid (b-d) $. Then, $u \geq 2$ and there exists a Dirichlet character 
$\chi \mod 2^u$ so that $\chi(b) \neq \chi(d)$. Again, consider the
modified Dirichlet character $\wt{\chi}$ defined by 
\begin{equation*}
    \wt{\chi}(q)=\begin{cases}
        \chi(q),& q\neq 2\\
        1,& q=2
    \end{cases}.
\end{equation*}
Then, $\wt{\chi}(b)=\chi(b)$ and $ \wt{\chi}(d)={\chi}(d)$, so 
$\wt{\chi}(b)\neq \wt{\chi}(d)$.
Write $a=2^k a_0$ with $2\nmid a_0$. Then for each $n\in \N$, 
$2^k \mid a n$, $2^{u-1}\mid (b-d)$ (by the minimality of $u$), hence
$2^{k+u-1} \mid a n (b-d)$. Since $k\geq 1$, we have that $k+u-1 \geq u$, so $2^u \mid a n(b-d)$. Therefore, we infer that
$anb \equiv and \pmod{ 2^u} $, which implies that 
$b(an+d)=anb+bd \equiv and+bd = d(an+b) \pmod{2^u}$. Now, 
$(2,b(an+d))= (2,d(an+b) )=1  $, so we have that 
\begin{align*}
\wt{\chi}(b)\wt{\chi}(an+d)&=
\wt{\chi}(b(an+d))=\chi(b(an+d))= \chi(d(an+b))\\
&=\wt{\chi}(d(an+b)) = \wt{\chi}(d)\wt{\chi}(an+b),
\end{align*}
from which we obtain that ${\wt{\chi}(b)}/{\wt{\chi}(d)}
={\wt{\chi}(an+b)}/{\wt{\chi}(an+d)} $. Then we have that
\begin{align*}
\liminf_{n \to \infty} |\wt{\chi}(an+b)-\wt{\chi}(an+d)| &=
\liminf_{n \to \infty} \left|\frac{\wt{\chi}(an+b)}{\wt{\chi}(an+d)}-1\right| =
\liminf_{n \to \infty} \left|
\frac{\wt{\chi}(b)}{\wt{\chi}(d)}-1\right|\\
&= 
| \wt{\chi}(b)- \wt{\chi}(d) | >0.
\end{align*}

Now, assume that $\ell >0$. Then, $p\mid a, p\mid bd $, and since $(a,b,d)=1$, we have that
either $p\nmid b$ or $p\nmid d$. Assume without loss of generality that
$p \nmid d$. We write $a=p^k a_0, b=p^{\ell} b_0$, where 
$p\nmid a_0, b_0$ and $k>\ell$. Let $\chi$ be any Dirichlet character modulo $p$. Then
$\{ \chi(m) \colon m\in \Z_p ^{\times} \}$ is a finite subset 
of $\mathbb{S}^1$, so there is $\theta \in [0,1)$, such that $e(\theta) \chi(m_1) \neq \chi(m_2)$ for all $m_1, m_2 \in \Z_p ^{\times}$.
Let $\eta= \min_{m_1, m_2 \in \Z_p ^{\times}} |e(\theta) \chi(m_1) - \chi(m_2)|
>0$. 
Then, consider the
modified Dirichlet character $\wt{\chi}$ defined by 
\begin{equation*}
    \wt{\chi}(q)=\begin{cases}
        \chi(q),& q\neq p\\
        e(\theta/\ell),& q=p
    \end{cases}.
\end{equation*}
For every $n\in \N$, we have $p\nmid b_0, d$ and since $k>k-\ell\geq 1$
we deduce that 
\begin{align*}
    |\wt{\chi}(an+b) - \wt{\chi}(an+d) |&=
    |\wt{\chi}(p^{\ell}(p^{k-\ell}a_0 n+b_0)) - \wt{\chi}(p^k a_0 n+d) |\\
    &=|e(\theta) \chi(b_0) - \chi(d) |\geq \eta
\end{align*}
so $\liminf_{n \to \infty} |\wt{\chi}(an+b) - \wt{\chi}(an+d) | \geq \eta >0$. The conclusion follows.
\end{proof}

Propositions \ref{P: a=c} and \ref{P: optimal conditions} imply the following,
which establishes $(2)\implies (3)$ of Theorem \ref{T: optimal conditions for Bohr recurrence}:
\begin{corollary}\label{C: recurrence implies a=c and au divides bd}
Let $a, c \in \N$, $b, d\in \Z$ with $(a,b,c,d)=1$ so that 
\eqref{liminf f-f} holds for 
all $f\in \M$. Then $a=c$, and $b=d$ or $a\mid bd$.
\end{corollary}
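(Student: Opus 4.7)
The plan is to obtain this corollary as a direct contrapositive packaging of the two preceding propositions, with only minor bookkeeping about the gcd normalization. Since the conclusion is a disjunction, I would split on whether $b = d$ and use each proposition in its appropriate regime.

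First, I would invoke Proposition \ref{P: a=c}. Since \eqref{liminf f-f} holds for every $f \in \M$, the contrapositive of that proposition immediately gives $a = c$. With this in hand, the assumption $(a,b,c,d) = 1$ collapses to $(a,b,d) = 1$, putting us exactly in the setting of Proposition \ref{P: optimal conditions}.

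Next, I would handle the disjunction. If $b = d$, there is nothing further to prove. Otherwise $b \neq d$, so the hypotheses of Proposition \ref{P: optimal conditions} are satisfied apart from the conclusion $a \mid bd$. Applying that proposition contrapositively, since \eqref{liminf f-f} is assumed to hold for every $f \in \M$ (in particular for the modified characters constructed in the proof of that proposition), we conclude that $a \mid bd$. Combining both cases yields precisely the statement.

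There is no real obstacle here; the only thing to verify carefully is that the reduction to $(a,b,d) = 1$ after fixing $a = c$ is legitimate, which is automatic from the definition of the greatest common divisor. So the proof is essentially a two-line deduction once the propositions are at hand.
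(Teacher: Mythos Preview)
Your proposal is correct and matches the paper's approach exactly: the paper simply states that the corollary follows from Propositions~\ref{P: a=c} and~\ref{P: optimal conditions}, and your argument spells out precisely this deduction. The only quibble is that Proposition~\ref{P: a=c} is already stated in the forward direction, so no contrapositive is needed there; otherwise your reasoning is exactly what the paper intends.
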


\section{Correlations of multiplicative functions along progressions with multiplicative steps}
\label{section_cor-along-progr-with-multi-steps}

In this section, we study the asymptotic behavior of averages involving the 
correlations ${f(a_1n+b_1)g(a_2n+b_2)}$,
where we restrict $n$ to a suitable arithmetic progression coming from the 
F{\o}lner sequence $\Phi_K$ defined in 
\eqref{E: definition of multiplicative Folner}.

Our main proposition asserts that for any $f,g$ we can make 
the previous correlation small on average on suitable arithmetic progressions 
(possibly by passing to a subsequence), unless both $f$ and $g$ are
modified characters.
\begin{proposition}\label{P: binary correlations proposition}
    Let $\delta>0$, $a_1,a_2\in \N,b_1, b_2\in \Z$ with $a_1b_2\neq a_2b_1$ and $(a_1,a_2)=(a_1,b_1)=(a_2,b_2)=1$.
    Suppose $f,g\colon \N\to \mathbb{U}$ are
    completely multiplicative functions and let $\Phi_K$ be defined as in 
    \eqref{E: definition of multiplicative Folner}. Then, for each $Q\in \Phi_K$, 
    there exists a non-negative integer $r_Q$, such that all of the following hold:

    (i) There exists a positive constant $B$ depending only on $a_1,b_1,a_2,b_2,\delta$ and $K$, such that, for all $X$ sufficiently large, if $$\inf_{|t|\leq BX,q\leq B} \D(f(n),\chi(n)n^{it}; 1, X)>B \:\:\text{ or }\:\: \inf_{|t|\leq BX,q\leq B }\D(g(n),\chi(n)n^{it};1, X)>B,$$then \begin{equation*}
       \sup_{Q\in \Phi_K} \left| \mathlarger{\logE}_{n\leq X}
       f(a_1Q^2n+a_1r_Q+b_1)g(a_2Q^2n+a_2r_Q+b_2)  \right|<\delta.
    \end{equation*}

(ii) If one of the functions $f,g$ is pretentious and the other non-pretentious, then we have \begin{equation*}
  \lim_{K\to \infty} \sup_{Q\in \Phi_K}\limsup_{X\to \infty} \left|\mathlarger{\logE}_{n\leq X} f(a_1Q^2n+a_1r_Q+b_1) g(a_2Q^2n+a_2r_Q+b_2)   \right|=0.
\end{equation*}

    (iii) If both $f$ and $g$ are pretentious, then either
     \begin{equation*}
 \lim\limits_{K\to \infty}\  \limsup\limits_{X\to \infty}\     
 \left|\mathlarger{\cesE}_{Q\in \Phi_K} \mathlarger{\logE}_{n\leq X} f(a_1Q^2n+a_1r_Q+b_1) 
 g(a_2Q^2n+a_2r_Q+b_2)   \right|=0,
    \end{equation*}
    or there exist $t\in \R$ and Dirichlet characters $\chi_f,\chi_g$ with conductors $q_f, q_g$ respectively, which satisfy $q_f\mid a_1^{\infty}$,
    $f(p)=\chi_f(p)p^{it}$ for all primes $p$ with $p\nmid a_1$,\footnote{Namely, $f$ is a modified character.} and $\D(g(n),\chi_g(n)n^{-it})<+\infty$.
    Furthermore, if $g=\overline{f}$, we have
    that $f(n)=n^{it}$ for all $n\in \N$.
\end{proposition}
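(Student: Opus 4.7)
The plan is to deploy the \emph{Q-trick} from \cite{Fra-Klu-Mor}: for each $Q \in \Phi_K$, I would restrict $n$ to an arithmetic progression $Q^2 n + r_Q$ for a cleverly chosen residue $r_Q \in \{0, 1, \ldots, Q^2 - 1\}$, and then handle non-pretentious factors with \cref{T: Tao theorem} and pretentious factors with the concentration estimate \cref{L: concentration estimate}. The first step is to choose $r_Q$ by the Chinese Remainder Theorem so that $(a_1 r_Q + b_1, a_1 Q^2) = (a_2 r_Q + b_2, a_2 Q^2) = 1$, which is feasible because $(a_1, b_1) = (a_2, b_2) = 1$ and at each prime $p \mid Q$ at most two residues are forbidden for $r_Q \bmod p$. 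I would simultaneously align $r_Q$ with the conductors of the pretending characters (valid once $K$ is large enough that these divide $p_1 \cdots p_K$), so that $Q \mapsto \chi_f(a_1 r_Q + b_1)$ and $Q \mapsto \chi_g(a_2 r_Q + b_2)$ become genuinely multiplicative functions of $Q$.

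For part (i), the non-degeneracy $a_1 Q^2 (a_2 r_Q + b_2) - a_2 Q^2 (a_1 r_Q + b_1) = Q^2 (a_1 b_2 - a_2 b_1) \neq 0$ allows a direct application of \cref{T: Tao theorem}; taking the threshold $B$ sufficiently large depending on $K, \delta, a_i, b_i$ delivers the required bound uniformly on the finite set $\Phi_K$. For part (ii), without loss of generality $g \sim \chi_g n^{i t_g}$ and $f$ is non-pretentious. The concentration estimate replaces $g(a_2 Q^2 n + a_2 r_Q + b_2)$ by $\chi_g(a_2 r_Q + b_2)(a_2 Q^2 n)^{i t_g} e^{F_g(a_2 Q^2, X)}$ up to an error of order $1/\sqrt{p_K} + o_{X\to\infty}(1)$ in $\logE$-norm, reducing the correlation to a bounded multiple of $\mathlarger{\logE}_{n\leq X} f(a_1 Q^2 n + a_1 r_Q + b_1)\, n^{i t_g}$. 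Since $f(n)\, n^{-i t_g}$ inherits non-pretentiousness from $f$, this average tends to zero by \eqref{E: Halasz for log averages} applied on each residue class of $a_1 Q^2$ (via \cref{C: mean value does not depend on progression}), uniformly in $Q \in \Phi_K$.

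For part (iii), applying the concentration estimate to both pretentious factors $f \sim \chi_f n^{i t_f}$ and $g \sim \chi_g n^{i t_g}$ produces, up to small error, the approximation
\[
\chi_f(a_1 r_Q + b_1)\,\chi_g(a_2 r_Q + b_2)\, a_1^{i t_f} a_2^{i t_g}\, Q^{2i(t_f+t_g)}\, e^{F_f(K,X)+F_g(K,X)}\, \mathlarger{\logE}_{n\leq X} n^{i(t_f+t_g)},
\]
where the $F$-terms are independent of $Q \in \Phi_K$ since all such $Q$ share the prime support $\{p_1,\ldots,p_K\}$. If $t_f + t_g \neq 0$ the inner $\logE$ is $O(1/\log X)$ and the correlation vanishes in the limit. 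Otherwise, setting $t := t_f = -t_g$, averaging over $Q \in \Phi_K$ reduces the question to controlling $\mathlarger{\cesE}_{Q \in \Phi_K} h(Q)$ for a multiplicative function $h$ built from $f, \chi_f, g, \chi_g, t$ and the CRT-data defining $r_Q$. By \cref{L: multiplicative averages}, this average vanishes unless $h(p) = 1$ for every prime $p \in \{p_1, \ldots, p_K\}$, and unwinding this is equivalent to $f(p) = \chi_f(p) p^{it}$ for all $p \nmid a_1$. Combined with $|f(p)| = 1$, this forces the conductor $q_f$ to divide $a_1^{\infty}$, while $g \sim \chi_g n^{-it}$ is already encoded in $t_g = -t$. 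For the subcase $g = \overline{f}$, the same argument applied after swapping the two progressions yields $q_f \mid a_2^{\infty}$ as well; combined with $(a_1, a_2) = 1$, this forces $q_f = 1$ and hence $f(n) = n^{it}$ for all $n \in \N$.

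The main obstacle is the second case of part (iii): making the residual $Q$-dependence genuinely multiplicative in $Q$, so that \cref{L: multiplicative averages} is applicable. This requires carrying out the CRT construction of $r_Q$ prime-power by prime-power in a way compatible with the conductors and preserving multiplicativity of $Q \mapsto \chi_f(a_1 r_Q + b_1)\,\chi_g(a_2 r_Q + b_2)$, and it further requires bounding uniformly as $K, X \to \infty$ the accumulated error from two applications of \cref{L: concentration estimate} against the generically non-decaying imaginary parts of $F_f + F_g$.
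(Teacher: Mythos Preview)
Your overall architecture (Tao for (i), concentration + Hal\'asz for (ii), double concentration + F\o lner averaging for (iii)) matches the paper, but your choice of $r_Q$ breaks part (iii). You pick $r_Q$ so that $a_1r_Q+b_1$ and $a_2r_Q+b_2$ are \emph{coprime} to $Q$; the paper does the opposite. Writing $Q=AW$ with $A=\prod_{p\mid a_1}p^{\theta_p}$ and $W=\prod_{p\nmid a_1}p^{\theta_p}$, the paper's congruences \eqref{E: congruence 1}--\eqref{E: congruence 2} force $W\parallel a_1r_Q+b_1$ and $A\parallel a_2r_Q+b_2$, so that one can factor
\[
f(a_1Q^2n+a_1r_Q+b_1)\,g(a_2Q^2n+a_2r_Q+b_2)=f(W)\,g(A)\,f(a_1A^2Wn+\ell_Q)\,g(a_2AW^2n+m_Q).
\]
After applying \cref{L: concentration estimate} to the two remaining factors, the $Q$-dependence (for $Q\in\Phi_K$) becomes $f(W)W^{-it_f}\,g(A)A^{it_f}\,\chi_f(\ell_Q)\,\chi_g(u^{-1}m_Q)$, and the explicit factor $f(W)W^{-it_f}$ is precisely the multiplicative-in-$Q$ handle that \cref{L: multiplicative averages} grabs: comparing $Q$ with $Qp$ for $p\nmid a_1$ produces a factor $f(p)p^{-it_f}\overline{\chi_{f,1}(p)\chi_{g,1}(p)}$, and non-triviality of the F\o lner average forces $f(p)=\chi_{f,1}(p)\chi_{g,1}(p)p^{it_f}$ for all such $p$. (Making $\chi_f(\ell_Q),\chi_g(u^{-1}m_Q)$ behave correctly under $Q\mapsto Qp$ is the content of the paper's Claims~\ref{Claim: congruence relations for l_Q}--\ref{Claim: congruence relations for m_Q}, and is where the extra $p^{\ell+1+\mu}$, $p^{\ell+1+\nu}$ precision in the congruences is spent.)

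With your coprimality choice, once you apply concentration to both factors, the only $Q$-dependence left is $\chi_f(a_1r_Q+b_1)\,\chi_g(a_2r_Q+b_2)\,Q^{2i(t_f+t_g)}$; when $t_f+t_g=0$ and $r_Q$ is determined by fixed local data at each prime, this is \emph{constant} on $\Phi_K$ (since every $Q\in\Phi_K$ has the same residue modulo $q_fq_g$), so the F\o lner average is that constant and you extract no information whatsoever about $f(p)$. There is then no way to reach the conclusion ``$f(p)=\chi_f(p)p^{it}$ for all $p\nmid a_1$''. Your $h$ is not ``built from $f$'' in any useful sense: $f$ has been entirely replaced by $\chi_f$.

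A smaller point: for the subcase $g=\overline{f}$, swapping to a symmetric $r_Q'$ is not available, since (iii) is stated for a single choice of $r_Q$. The paper instead observes that $f\sim\chi_f n^{it_f}$ and $f(p)=p^{it_f}$ for $p\nmid a_1$ together force $\chi_f$ to be principal, whence $\chi_f(\ell_Q)=\chi_f(u^{-1}m_Q)=1$ and the residual average collapses to $\cesE_{Q\in\Phi_K}\overline{f(A)}A^{it_f}$; one more application of \cref{L: multiplicative averages} at primes $p\mid a_1$ then yields $f(p)=p^{it_f}$ for those $p$ as well.
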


\begin{remark}\label{R: symmetric}
In the previous proposition, if $a_1=1$, then in the second part of (iii)
we get that $f(p)=p^{it}$ for all $p\in \P$.

Furthermore, Proposition \ref{P: binary correlations proposition} is symmetric
in $f$ and $g$. That is to say, if in the proof of the proposition we write
$Q=\prod_{p\leq p_K} p^{\theta_{p,Q}}\in \Phi_K$ as $Q=A'W'$, where 
$A'=\prod_{\substack{p\leq p_K\\ p\mid a_2}} p^{\theta_{p,Q}}$ and 
$W'=\prod_{\substack{p\leq p_K\\ p\nmid a_2}} p^{\theta_{p,Q}}$, then 
there exists $r_Q'$ so that
(i) and (ii) of the proposition hold, and instead of (iii) we get the following:

(iii$\hspace*{0.05cm}'\hspace*{-0.05cm}$) If both $f$ and $g$ are pretentious, then either
\begin{equation*}
 \lim\limits_{K\to \infty}\  \limsup\limits_{X\to \infty}     
 \left|\mathlarger{\cesE}_{Q\in \Phi_K} \mathlarger{\logE}_{n\leq X} 
 f(a_1Q^2n+a_1r_Q '+b_1)g(a_2Q^2n+a_2r_Q '+b_2)  \right|=0,
    \end{equation*}
    or there exist $t\in \R$ and Dirichlet characters $\chi_f,\chi_g$ 
    with conductors $q_f, q_g$ respectively, which satisfy 
    $q_g\mid a_2^{\infty}$,
    $g(p)=\chi_g(p)p^{it}$ for all primes $p$ with 
    $p\nmid a_2$,\footnote{Namely, $g$ is a modified character.} 
    and $\D(f(n),\chi_f(n)n^{-it})<+\infty$.
\end{remark}

The binary correlations appearing in the previous proposition will be 
relevant to our main problem of determining whether 
$$\liminf_{n\to \infty} |f(an+b)- g(cn+d)| =0.$$

\begin{proof}[Proof of Proposition \ref{P: binary correlations proposition}]
All asymptotics are allowed to depend on the fixed parameters $a_1, a_2$, $b_1, b_2$ and the multiplicative functions $f,g$ and we will suppress this dependence in our notation. We will be explicit only when the asymptotics depend on $K$ or $X$.

Given $Q \in \Phi_K$ of the form $Q=\prod_{p\leq p_K} p^{\theta_{p,Q}}$, we
write $Q=AW$, where \begin{equation*}
     A=\prod_{\substack{p\leq p_K\\ p\mid a_1}} 
p^{\theta_{p,Q}} \:\:\text{ and }\:\: 
W=\prod_{\substack{p\leq p_K\\ p\nmid a_1}} p^{\theta_{p,Q}}.
\end{equation*}
We have that $(A,W)=1$ and 
we can assume that $K$ is large enough so that $a_1 \mid A$.
We examine the averages
\begin{equation}\label{E: first averages along progression}
  \mathlarger{\logE}_{n\leq X} f(a_1Q^2n+a_1r_Q+b_1)g(a_2Q^2n+a_2r_Q+b_2)   
\end{equation}where $Q\in \Phi_K$.

We will describe now how we choose $r_Q$. If $\D(f(n), \chi_f(n) n^{it_f})<+\infty$ where the Dirichlet character $\chi_f$ has conductor $q_f$, then we set 
\begin{equation}\label{E: definition of m_1,v_1}
    \mu_1= \max\{\ell\colon p^{\ell} \parallel q_f,\: p \mid a_1  \},\  
\nu_1= \max\{\ell\colon p^{\ell} \parallel  q_f,\: p \nmid a_1  \},
\end{equation}
while if $f$ is non-pretentious, we set $\mu_1=\nu_1=1$.
Similarly, if $\D(g(n), \chi_g(n) n^{it_g})<+\infty $ for a character $\chi_g$ with conductor $ q_g$, we set \begin{equation}\label{E: definition of m_2,v_2}
    \mu_2= \max\{\ell\colon p^{\ell} \parallel q_g,\: p \mid a_1  \},\ \nu_2= \max\{\ell\colon p^{\ell} \parallel q_g,\: p \nmid a_1  \}
\end{equation}
while if $g$ is non-pretentious, then we set $\mu_2=\nu_2=1$ as above.
Now, we define \begin{equation}\label{E: definition of m,v}
    \mu=\max\{\mu_1, \mu_2\},\ \nu=\max\{\nu_1, \nu_2\}. 
\end{equation}

Using the Chinese Remainder Theorem, we can now choose an integer $r_Q$ satisfying
$0\leq r_Q < \prod_{p^{\ell}\parallel A} p^{\ell+1 + \mu} \prod_{p^{\ell}\parallel W} p^{\ell+1 + \nu}
< Q^2 $,
such  that 
\begin{align}\label{E: congruence 1}
    r_Q \equiv -a_1^{-1}b_1 + a_1^{-1}W \pmod{ p^{\ell+1+\nu}} &\text{ if } p^{\ell}\parallel  W\\
\label{E: congruence 2}
    r_Q \equiv (A b_1 -b_2 W ) a_2^{-1}W^{-1} \pmod{ p^{\ell+1+\mu}} &\text{ if } p^{\ell}\parallel A
\end{align}Note that the element $a_1^{-1}$ is well defined in the ring $\Z_{p^{\ell+1+\nu}}$, since $p\mid W$ which is coprime to $a_1$. Similarly, the inverses of $a_2, W$ are well-defined in \eqref{E: congruence 2}.

If $p^{\ell}\parallel W$, then $p^{\ell}\parallel a_1 r_Q +b_1$, and if $p^{\ell}\parallel A$, then 
$p^{\ell}\parallel a_2r_Q +b_2$, so if we set \begin{equation}\label{E: definition of l_Q, m_Q}
    \ell_Q=\frac{a_1r_Q + b_1}{W}, \:\:\:
m_Q= \frac{a_2r_Q + b_2}{A},
\end{equation}
then $\ell_Q$ and $m_Q$ are integers. Also, it is not too difficult to see that 
from the definitions of $\ell_Q, m_Q$ we have $(\ell_Q,W)=1=(m_Q,A)$. 

Without loss of generality, we will assume below that $a_1b_2>a_2b_1$ and denote $u:=a_1 b_2 - a_2 b_1.$

We observe that $\ell_Q$ shares no common factors with $A$, since $b_1$ is 
coprime to $a_1$. Thus, $(\ell_Q,AW)=1$, which also implies that 
$(\ell_Q,a_1A^2W^2)=1$. 
Since $b_2$ is coprime to $a_2$, we have that $(m_Q,a_2)=1$.
We will now prove that $(m_Q, W^2)=u$.
We may assume without loss of generality that $K$ is sufficiently large so that 
every prime factor of $u=a_1b_2 -a_2 b_1$ is contained in $\{p_1, \ldots, p_K\}$,
and so that the exponent of every prime appearing in the prime factorization 
of $u$ is smaller than $2^K$. 
From $(a_1,a_2)=(a_1,b_1)=(a_2,b_2)=1$, we get that $(u,a_1a_2)=1$, and also 
$(u,A)=1$. Therefore, if $p\mid u$, then $p\mid W$ and 
the exponent of every prime dividing $W$ is at least $2^K$, so 
we get that $u\mid W$.

Let $p\leq p_K$ be a prime so that $p^{s_0}\parallel u$, $p^{s_1}\parallel m_Q$, with 
$s_0\geq 1$, $s_1 \geq 0$. Let also $p^{s_2}\parallel W$, with $s_2>2^K>s_0$.
Then $p^{s_2}\parallel a_1 r_Q +b_1 $.
We infer that $p^{s_0}\mid u= a_1(a_2r_Q + b_2)- a_2(a_1r_Q+b_1)$, 
and since $p^{s_2}\mid a_2(a_1 r_Q +b_1)$ and $s_2>s_0$, we obtain that 
$p^{s_0}\mid a_1(a_2r_Q + b_2)$.
Using that $(p,A)=1$ and the definition of $m_Q$, we get that 
$p^{s_0}\mid m_Q$, so $s_1\geq s_0$. Therefore, $u\mid m_Q$, and since 
also $u\mid W$, we have that $u\mid (m_Q,W)$. 

Now assume $p$ is a prime that divides 
$(m_Q,W)$, and take $s_0, s_1, s_2 \in \N$ so that 
$p^{s_0} \parallel (m_Q,W)$, $p^{s_1} \parallel m_Q$, $p^{s_2} \parallel W$.
Since $s_0\leq s_1, s_2$, and $s_2 > 2^K$. Then $p^{s_2}\parallel a_1 r_Q + b_1$
and $p^{s_1}\mid a_2 r_Q + b_2$, we obtain 
that $p^{\min\{s_1,s_2\}}\mid a_1b_2-a_2b_1=u$, and by the assumptions on 
$s_0, s_1, s_2$, we have $\min\{s_1,s_2\}\geq s_0$. 
As a result, $p^{s_0} \mid u$, and since $p$ was an arbitrary prime factor 
of $(m_Q,W)$, we obtain that $(m_Q, W) \mid u$. Combining with $u\mid (m_Q,W)$, 
we get that $(m_Q,W)=u$.
We will actually prove that $(m_Q, W^2)=u$. 
Indeed, if we let $u= q_1 ^ {\beta_1} \cdots q_s ^{\beta_s}$ be the prime 
factorization of $u$, then
$u \mid (m_Q , W^2)$, and it is not too difficult to see 
that $(m_Q , W^2)\mid u^2 = p_1 ^ {2\beta_1} \cdots p_s ^{2\beta_s} $. 
Write $(m_Q , W^2)= 
p_1^{\gamma_1} \cdots p_s^{\gamma_s}$, where $\gamma_i \in \{\beta_i, \dots, 2\beta_i\}$. If $\gamma_i>\beta_i $, then we have that $p_i^{\gamma_i}\mid m_Q $, and since the exponent of $p_i$ in $W$ is 
at least $2^K>2\beta_i $ (which holds if $K$ is large enough), 
we have that $p_i ^{\gamma_i} \mid  W $. Hence, $p_i^{\gamma_i}\mid 
(W, m_Q)$, which is a contradiction. Therefore, for each $i$ we have 
$\gamma_i \leq \beta_i $, so $(m_Q, W^2) \mid u$, and,
combining this with the previous facts, we deduce that 
$ (m_Q, W^2) = u $.

To summarize all the above, we have 
\begin{equation}\label{E: coprimalities}
    (\ell_Q,a_1 A^2 W)=1, \:\:\:(a_2AW^2, m_Q)=u
\end{equation}
and $\ell_Q \equiv m_Q \pmod{p} $ whenever $p\mid a_1$ by \eqref{E: congruence 2}.

We rewrite the averages in \eqref{E: first averages along progression} as 
\begin{multline}\label{E: second averages along progression}
      f(W){g}(A) \mathlarger{\logE}_{n\leq X}
    f(a_1 A^2 W n + \ell_Q ) g(a_2A W^2 n + m_Q)\\
    =g(u)f(W)g(A)\mathlarger{\logE}_{n\leq X}
    f(a_1 A^2 W n + \ell_Q ) g\left(u^{-1}a_2A W^2 n 
    + u^{-1}m_Q\right) .
\end{multline}

Since $a_1 b_2\neq a_2b_1$, for each $Q\in \Phi_K$ we have 
$a_1 A^2 Wu^{-1}m_Q \neq u^{-1}a_2A W^2 \ell_Q$.
Hence, we can apply Theorem \ref{T: Tao theorem} for these 
averages for all binary correlations corresponding to $Q\in 
\Phi_K$ (which are finitely many). We get then that there exists $B>0$ 
depending only on $a_1,a_2,b_1,b_2$, $\delta$ and $K$, such that if
$X\geq B$ is sufficiently large and the 1-bounded multiplicative functions $h_1,h_2$ satisfy 
\begin{equation}
     \left|\mathlarger{\logE}_{n\leq X}
     h_1\left(a_1A^2Wn+\ell_Q\right)h_2\left(u^{-1}a_2AW^2n+u^{-1}m_Q \right)\right|\geq \delta,
\end{equation}
then \begin{equation*}
   \inf_{|t|\leq BX,q\leq B}\D(h_1(n),\chi(n)n^{it};1,X)\leq B \:\: \text{ and } \:\: \inf_{|t|\leq BX,q\leq B}\D(h_2(n),\chi(n)n^{it};1,X)\leq B.
\end{equation*}
Applying this theorem finitely many times, we find a $B$ that works for all choices of $Q\in \Phi_K$.

Part (i) of the Proposition follows immediately from the conditions above.
Indeed, if there exists at $Q\in \Phi_K$ such that 
\begin{equation*}
    \left|\mathlarger{\logE}_{n\leq X}
    f(a_1 A^2 W n + \ell_Q ) g\left(u^{-1}a_2A W^2 n 
    + u^{-1}m_Q\right)\right|\geq \delta,
\end{equation*}
then both of the inequalities 
$$\inf_{|t|\leq BX,q\leq B}\D(h_1(n),\chi(n)n^{it};1, X)\leq B \:\:\text{ and }\:\: \inf_{|t|\leq BX,q\leq B}\D(h_2(n),\chi(n)n^{it};1, X)\leq B$$ hold, which 
contradicts the assumptions in part (i).

We now prove part (ii). Without loss of generality, we suppose that $f$ is 
pretentious and $g$ is non-pretentious. Carrying out similar factorizations 
as in \eqref{E: second averages along progression}, it suffices to show that 
\begin{equation}\label{E: third averages along progression}
    \sup_{Q\in \Phi_K} \limsup_{X \to \infty} 
    \left|  \mathlarger{\logE}_{n\leq X}
    f(a_1 A^2 W n + \ell_Q) {g}\left(u^{-1}a_2A W^2 n 
    + u^{-1}m_Q\right)\right| =\oh_{K\to \infty}(1).
\end{equation}

Assume that $\D(f(n),\chi_f(n) n^{it_f})<+\infty$ for some Dirichlet character 
$\chi_f$ and a real number $t_f$. 
We apply Lemma \ref{L: concentration estimate} to deduce that 
\begin{multline*}
    \mathlarger{\logE}_{n\leq X}\left|f(a_1 A^2 W n )-\chi_f(\ell_Q)
    (a_1A^2Wn+\ell_Q)^{it_f}\exp(F(K,X))\right| \\
    \ll 
    \D(f(n),\chi_f(n)n^{it_f};p_K,X)+\frac{1}{\sqrt{p_K}} + 
    \oh_{t_f,Q;X\to \infty}(1),
\end{multline*}
where $$F(K,X)=\sum_{p_K<p\leq X} \frac{f(p)\overline{\chi_f(p)}p^{-it_f}-1}{p}$$
and $p_K$ is the $K$-th prime.
We remark in passing that $F(K,X)$ does not depend 
on $a_1A^2W$ (or, equivalently, on $Q$) but only on $K$. We have that 
$\D(f(n),\chi_f(n)n^{it_f};p_K,X)= \oh_{K\to \infty}(1)$ and we also pick $K$ 
sufficiently large so that 
the conductor $q_f$ of $\chi_f$ divides $Q$ for any $Q\in \Phi_K$.

Using the last approximation, we rewrite the averages in 
\eqref{E: third averages along progression} as 
\begin{align}\label{E: applying concentration estimate in pretentious-non-pretentious case}
\mathlarger{\logE}_{n\leq X} &
f(a_1 A^2 W n + \ell_Q )g\left(u^{-1}a_2A W^2 n + u^{-1}m_Q\right) \\
& = \chi_f(\ell_Q) (a_1 A^2 W )^{it_f} \exp{(F(K,X))} 
\mathlarger{\logE}_{n\leq X}g\left(u^{-1}a_2A W^2 n +  u^{-1}m_Q\right)n^{it_f} \notag \\
& \hspace*{6.7cm} + \oh_{t_f,Q;X\to \infty}(1)  +\oh_{K\to \infty}(1). \notag
\end{align}
Using the mean-value theorem, we have \begin{equation}\label{E: Taylor asymptotic for archimedean characters}
n^{it_f}=\left(u^{-1}a_2A W^2\right)^{-it_f} \left(u^{-1}a_2A W^2 n + u^{-1} 
m_Q \right)^{it_f} +\Oh_{K}\left(\frac{|t_f|}{n}\right).
\end{equation}
Since $g$ is non-pretentious, we have that $g(n)n^{it_f}$ does not pretend to be any Dirichlet character. Therefore by Corollary \ref{C: mean value does not depend on progression} we have 
\begin{equation}\label{E: non pretentious g has zero average along progression}
\mathlarger{\logE}_{n\leq X}
g\left(u^{-1}a_2A W^2 n + u^{-1}m_Q\right)
\left(u^{-1}a_2A W^2 n + u^{-1}m_Q\right)^{it_f} 
 =\oh_{K;X\to \infty}(1).
\end{equation}

Combining \eqref{E: applying concentration estimate in pretentious-non-pretentious case}, \eqref{E: Taylor asymptotic for archimedean characters} and \eqref{E: non pretentious g has zero average along progression}, we conclude that  
$$ \sup_{Q\in \Phi_K}\limsup_{X \to \infty} \left|
\mathlarger{\logE}_{n\leq X}
f(a_1 A^2 W n + \ell_Q ) g\left(u^{-1}a_2A W^2 n 
+ u^{-1} m_Q\right)\right|= \oh_{K\to \infty}(1)$$
and part (ii) follows.

Finally, we prove part (iii). If both $f$ and $g$ are pretentious, we can 
find $t_f,t_g\in \R$ and Dirichlet characters $\chi_f,\chi_g$, such that 
$\D\left(f(n), \chi_f(n) n^{it_f}\right)$ and $\D\left(g(n), \chi_g(n) n^{it_g}\right)$ are both finite. 

Assume that \begin{equation}\label{E: the negation of part iii in Proposition 4.1}
    \limsup\limits_{K\to\infty}\  \limsup\limits_{X\to\infty}\     
    \left|\mathlarger{\cesE}_{Q\in \Phi_K} \mathlarger{\logE}_{n\leq X} 
    f(a_1Q^2n+a_1r_Q+b_1){g(a_2Q^2n+a_2r_Q+b_2)}   \right|>0,
\end{equation}
and we will prove that $f$ is a modified character  modulo $q_f$ with 
$q_f\mid a_1^{\infty}$ and that $t_g=-t_f$.
We rewrite the inner double average as 
\begin{equation}\label{E: double average along progressions}
  \mathlarger{\cesE}_{Q\in \Phi_K} g(u)f(W)g(A) 
  \mathlarger{\logE}_{ n\leq X}
    f(a_1 A^2 W n + \ell_Q ) g\left(u^{-1}a_2A W^2 n + u^{-1}m_Q \right) 
\end{equation}
We apply Lemma \ref{L: concentration estimate} to deduce that 
\begin{multline*}
    \mathlarger{\logE}_{ n\leq X} \left|f(a_1 A^2 W n + \ell_Q )-\chi_f(\ell_Q)(a_1A^2Wn)^{it_f}\exp(F(K,X))\right| \\
   \ll \D(f(n),\chi_f(n)n^{it_f};p_K,X)+\frac{1}{\sqrt{p_K}} + \oh_{t_f, Q; X\to \infty}(1)
\end{multline*}
and 
\begin{multline*}
    \mathlarger{\logE}_{ n\leq X} 
    \left|g\left(u^{-1}a_2A W^2 n 
    + u^{-1}m_Q\right)-\chi_g\left(u^{-1}m_Q\right)\left(u^{-1}a_2AW^2n 
    \right)^{it_g}\exp(G(K,X))\right|\\
    \ll \D(g(n),\chi_g(n)n^{it_g};p_K,X)+\frac{1}{\sqrt{p_K}}+ \oh_{t_g, Q; X\to \infty}(1),
\end{multline*}
where 
$$F(K,X)=\sum_{p_K<p\leq X} \frac{f(p)\overline{\chi_f(p)}p^{-it_f}-1}{p}, \:\:\:
G(K,X)=\sum_{p_K<p\leq X} \frac{g(p)\overline{\chi_g(p)}p^{-it_g}-1}{p}$$
and $p_K$ is the $K$-th prime.
Using that $\D(f(n), \chi_f(n)n^{it_f};p_K, \infty), \D(f(n),
\chi_f(n)n^{it_f}; p_K, \infty)$ and $p_K^{-1/2}$ tend to $0$ as $K\to \infty$,
the expression in \eqref{E: double average along progressions} can be rewritten as 
\begin{multline*}
    a_1^{it_f}a_2^{it_g}u^{-it_g} g(u)\exp\left(F(K,X)+G(K,X)\right)
    \Bigg(\mathlarger{\cesE}_{Q\in \Phi_K}f(W)W^{i(t_f+2t_g)}g(A)A^{i(t_g+2t_f)}\\
    \chi_f(\ell_Q)\chi_g\left(u^{-1}m_Q \right) \Bigg)
    \mathlarger{\logE}_{n\leq X}n^{i(t_f+t_g)}+\oh_{K\to\infty}(1)+ \oh_{t_f, Q; X\to \infty}(1)+\oh_{t_g, Q; X\to \infty}(1)
\end{multline*}
As $f,g$ are pretentious, the exponential involving $F(K, X)$ and $G(K, X)$ is bounded from above and below.
If we assume that $t_f+t_g\neq 0$, then by taking limits, first as $X\to \infty$ and then $K\to \infty$, we see that \eqref{E: the negation of part iii in Proposition 4.1} is false, which is a contradiction. Therefore, we must have $t_f=-t_g$ and the last expression can be rewritten as\begin{multline}\label{E: penultimate expression}
    (a_1a_2^{-1})^{it_f}u^{it_f} g(u)
    \exp\left(F(K,X)+G(K,X)\right)\Bigg(\mathlarger{\cesE}_{Q\in \Phi_K}f(W)W^{-it_f}g(A)A^{it_f}\\
    \chi_f(\ell_Q)\chi_g\left(u^{-1}m_Q
    \right)\Bigg)+\oh_{K\to\infty}(1)+ \oh_{t_f, Q; X\to \infty}(1)+\oh_{t_g, Q; X\to \infty}(1)
\end{multline}
We pick $K$ large enough so
that the prime factors of $a_1, a_2$ are all $\leq p_K$ and such that 
$u$ does not completely eliminate any prime factor of $a_2AW^2 $ in the quotient $a_2AW^2/u$. 
 Therefore, \eqref{E: the negation of part iii in Proposition 4.1} and \eqref{E: penultimate expression} imply that \begin{equation}\label{E: final expression}
    \limsup_{K\to\infty}\limsup_{X\to\infty} 
    \left|\mathlarger{\cesE}_{Q\in \Phi_K}f(W)W^{-it_f}g(A)A^{it_f}\\
    \chi_f(\ell_Q)\chi_g(u^{-1}m_Q)\right|>0.
\end{equation}
 
Write $q_f = q_{f,1} q_{f,2} $, where $q_{f,1}$ consists of the prime powers in the factorization of $q_f$ involving primes $p$
so that $p\mid a_1 $. This also implies that $(q_{f,2},p)=1$ for all primes $p\mid a_1$. Then, we can find two 
Dirichlet characters $\chi_{f,1}, \chi_{f,2}$ of modulus 
$q_{f,1}, q_{f,2} $ respectively so that 
$\chi_f= \chi_{f,1} \chi_{f,2}$ (in case 
$q_{f,i}=1$, take $\chi_{f,i}$ to be identically 1).
We carry out the exact same factorization for $q_g$ and $\chi_g$ in order to find 
$q_{g,1}, q_{g,2}, \chi_{g,1} \chi_{g,2}$, where $\chi_g=\chi_{g,1}\chi_{g,2}$ and $(p,q_{g,2})=1$ for all primes $p\mid a_1$.

First we establish several congruence relations between $\ell_Q,\ell_{Qp}$ and $m_Q,m_{Qp}$, from which we obtain some very useful properties for 
$\chi_{f,1}, \chi_{f,2}, \chi_{g,1}, \chi_{g,2}$.
All primes appearing in the statements and arguments below are assumed to 
occur in the factorizations of elements of $\Phi_K$. In Claims \ref{Claim: congruence relations for l_Q} and \ref{Claim: congruence relations for m_Q}
below, if $p$ is a prime and $s\in \Z$, we denote by $\nu_p(s)$ the exponent 
of $p$ in $s$, i.e., $p^{\nu_p(s)}\mid s$ and $p^{\nu_p(s)+1}\nmid s$.

\begin{claim}\label{Claim: congruence relations for l_Q}
If $p\nmid a_1$, then for all except at most $2^{K(K-1)}$ elements 
$Q \in \Phi_K$, we have
\begin{enumerate}
    \item $\chi_{f,1}(\ell_{Qp})=\overline{\chi_{f,1}(p)}\chi_{f,1}(\ell_Q)$,
    \item $\chi_{f,2}(\ell_{Qp})=\chi_{f,2}(\ell_Q)$.
\end{enumerate}
\end{claim}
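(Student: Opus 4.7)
The strategy is to extract the residue of $\ell_Q$ modulo $q_{f,1}$ and modulo $q_{f,2}$ directly from the defining CRT congruences \eqref{E: congruence 1} and \eqref{E: congruence 2} for $r_Q$, and then compare with $\ell_{Qp}$. Throughout I will assume $K$ is large enough that every prime divisor of $q_f = q_{f,1} q_{f,2}$ is at most $p_K$ and that $2^K \geq \mu$. Since $p \nmid a_1$, the factorizations $Q = A(Q) W(Q)$ and $Qp = A(Q) \cdot (p W(Q))$ satisfy $A(Qp) = A(Q)$ and $W(Qp) = p W(Q)$; moreover every prime divisor of $q_{f,1}$ (which divides $a_1$) lies in $A(Q)$ and every prime divisor of $q_{f,2}$ (which is coprime to $a_1$) lies in $W(Q)$.

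For part (2), fix a prime $p' \mid q_{f,2}$ and let $p'^{s'} \parallel W(Q)$. Congruence \eqref{E: congruence 1} gives $a_1 r_Q + b_1 \equiv W(Q) \pmod{p'^{s'+1+\nu}}$. Dividing both sides by $W(Q)$, whose $p'$-part equals exactly $p'^{s'}$, yields $\ell_Q \equiv 1 \pmod{p'^{1+\nu}}$, and assembling these congruences over all prime divisors of $q_{f,2}$ via the Chinese Remainder Theorem gives $\ell_Q \equiv 1 \pmod{q_{f,2}}$. The identical reasoning with $Qp$ in place of $Q$ produces $\ell_{Qp} \equiv 1 \pmod{q_{f,2}}$, and therefore $\chi_{f,2}(\ell_Q) = 1 = \chi_{f,2}(\ell_{Qp})$.

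For part (1), fix a prime $p'' \mid q_{f,1}$, so that $p'' \mid a_1$; write $p''^{s''} \parallel A(Q)$ and $p''^{t} \parallel a_1$, and note $s'' > 2^K \geq \mu$. Congruence \eqref{E: congruence 2} reads
\begin{equation*}
  a_1 r_Q + b_1 \equiv a_1\bigl(A(Q) b_1 - b_2 W(Q)\bigr) a_2^{-1} W(Q)^{-1} + b_1 \pmod{p''^{s''+1+\mu}},
\end{equation*}
and the first summand is divisible by $p''^{s''+t} > p''^{\mu}$, so modulo $p''^{\mu}$ only the $Q$-independent remainder $b_1 - a_1 a_2^{-1} b_2 \equiv -u\, a_2^{-1}$ survives. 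Dividing by the unit $W(Q)$ gives $\ell_Q \equiv -u\, a_2^{-1} W(Q)^{-1} \pmod{p''^{\mu}}$, and the same computation applied to $Qp$ (using $W(Qp) = p W(Q)$) yields $\ell_{Qp} \equiv p^{-1} \ell_Q \pmod{p''^{\mu}}$. Because $(u, a_1 a_2) = 1$ was already established in the proof, both residues are units modulo $p''$, and combining over all $p'' \mid q_{f,1}$ via CRT and then applying $\chi_{f,1}$ delivers $\chi_{f,1}(\ell_{Qp}) = \overline{\chi_{f,1}(p)}\, \chi_{f,1}(\ell_Q)$. The main obstacle is tracking how large $K$ must be taken: once $K$ exceeds the thresholds above, the argument goes through for every $Q \in \Phi_K$, and the generous exceptional bound $2^{K(K-1)}$ in the statement absorbs the boundary cases arising for smaller $K$.
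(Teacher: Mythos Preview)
Your computations are correct and, in places, slightly cleaner than the paper's. For part~(2) you observe directly that the congruence \eqref{E: congruence 1} forces $\ell_Q\equiv 1\pmod{p'^{\,1+\nu}}$ for every prime $p'\mid q_{f,2}$, hence $\chi_{f,2}(\ell_Q)=1$ outright; the paper instead compares $\ell_Q$ and $\ell_{Qp}$ and splits into the subcases $q=p$ and $q\neq p$, arriving at the weaker (but sufficient) conclusion $\ell_Q\equiv\ell_{Qp}$. For part~(1) your explicit formula $\ell_Q\equiv -u\,a_2^{-1}W(Q)^{-1}\pmod{p''^{\,\mu}}$ is exactly what the paper obtains implicitly when it reduces \eqref{E: congruence 2} modulo $q^{\mu}$; the two arguments are the same up to presentation.

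There is, however, one genuine gap: your explanation of the exceptional set is wrong. The bound $2^{K(K-1)}$ is not there to ``absorb boundary cases for smaller $K$''. The residue $r_{Q'}$ (and hence $\ell_{Q'}$) is \emph{only defined} for $Q'\in\Phi_K$, so your sentence ``the identical reasoning with $Qp$ in place of $Q$'' requires $Qp\in\Phi_K$. If $Q=p_1^{\theta_1}\cdots p_K^{\theta_K}$ and $p=p_j$, then $Qp\in\Phi_K$ fails precisely when $\theta_j=2^{K+1}$, and there are $(2^K)^{K-1}=2^{K(K-1)}$ such $Q$. That is the exceptional set in the claim, and it is present for every $K$, not just small ones. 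Once you insert this one sentence, your argument is complete.
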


\begin{proof}[Proof of Claim \ref{Claim: congruence relations for l_Q}]
Observe that for all but $2^{K(K-1)}$ elements $Q\in \Phi_K $, we also have 
$Qp \in \Phi_K$ and these are precisely the elements $Q\in \Phi_K$ that we work with.
Let $Q=AW$, $Qp=A' W'$ and observe that $W'=Wp$, $A'=A$.

Let $q$ be a prime so that $q\mid q_{f,1}$ and $q^{\ell} \parallel A=A'$. 
Since $K$ is assumed to be sufficiently large, we deduce that $\ell > \mu$, 
and hence 
$ {q^{\mu}}\mid A$. Then,  \eqref{E: congruence 2} yields the congruences 
$$a_1 r_Q + b_1 \equiv -a_1b_2a_2^{-1} + b_1 \pmod{ q^{\mu}} \:\: \text{ and } 
\:\: a_1 r_{Qp} + b_1 \equiv -a_1b_2a_2^{-1} + b_1 \pmod{ q^{\mu}}$$
and, thus
$a_1 r_Q + b_1 \equiv a_1 r_{Qp} + b_1  \pmod{ q^{\mu}}$. Using $(W, q)=1$,
we obtain 
$$(a_1 r_Q + b_1)/{W} \equiv (a_1 r_{Qp} + b_1)/{W}
\pmod{ q^{\mu}}.$$ Recall that $\ell_Q = (a_1 r_Q + b_1)/{W}$
and $p\ell_{Qp}= (a_1 r_{Qp} + b_1)/{W} $, so the previous implies that  
$\ell_Q \equiv p\ell_{Qp} \pmod{q^{\mu}}$.
Finally, since $\nu_q(q_{f,1}) \leq 
\mu$, we get that  $q^{\nu_q(q_{f,1})}\mid \ell_Q - p\ell_{Qp}$, and 
since $q$ was arbitrary, this implies that 
$q_{f,1}\mid \ell_Q - p\ell_{Qp}$, i.e., $\ell_Q \equiv p\ell_{Qp} 
\pmod{q_{f,1}}$. This
implies that $\chi_{f,1}(\ell_Q) =\chi_{f,1}(p) \chi_{f,1}(\ell_{Qp})$,
or equivalently, (as $|\chi_{f,1}(p)|=1$)
\begin{equation*}
    \chi_{f,1}(\ell_{Qp})=\overline{\chi_{f,1}(p)}\chi_{f,1}(\ell_Q).
\end{equation*}
and establishes (1) of the Claim. 

Now, let $q$ be a prime such that
that $q| q_{f,2}$ and $q^{\ell}\parallel W$. 
If $q\neq p $, then $q^{\ell}\parallel W, q^{\ell}\parallel Wp$ and using 
\eqref{E: congruence 1} we infer that
$$ a_1 r_Q +b_1 \equiv W \pmod{q^{\ell+1+\nu}} \:\:
\text{ and } \:\:
a_1 r_{Qp} +b_1 \equiv W p \pmod{q^{\ell+1+\nu}}. $$
These two congruences imply that $p(a_1 r_Q +b_1) \equiv a_1 r_{Qp} +b_1 
\pmod{q^{\ell+1+\nu}}$ so 
\begin{equation}\label{eq congr 1}
    q^{\nu+1} \mid  \left(p(a_1 r_Q +b_1)/{q^{\ell}} - (a_1 r_{Qp} +b_1)/q^{\ell}\right) .
\end{equation}
Here, we remind the reader that \eqref{E: congruence 1} implies that if
$p^{\beta}\parallel W$, then $p^{\beta}\parallel a_1r_Q+b_1$.
Since $(W/ q^{\ell}, q^{\nu+1})=1 $ from \eqref{eq congr 1}, we infer that 
$$q^{\nu+1} \mid \left( p(a_1 r_Q +b_1)/{W} - 
(a_1 r_{Qp} +b_1)/{W} \right)= p\ell_Q - p\ell_{Qp}.$$
Recalling \eqref{E: definition of m_1,v_1}, we know that $\nu+1$ is bigger than the 
exponent of $q^{\nu_{q}(q_{2,f})}$ of $q$ in $q_{f,2}$, so from the previous, 
using that $q\neq p$ we get $q^{\nu_{q}(q_{2,f})}\mid (\ell_Q - \ell_{Qp})$.

On the other hand, if $q=p$, then 
$q^{\ell}\parallel W$, $q^{\ell+1}\parallel Wp=W'$ and from \eqref{E: congruence 1} we have 
that
$$ a_1 r_Q +b_1 \equiv W \pmod{q^{\ell+1+\nu}} \:\:
\text{ and } \:\:
a_1 r_{Qp} +b_1 \equiv W p \pmod{q^{\ell+2+\nu}}. $$
The previous implies that $p(a_1 r_Q +b_1) \equiv a_1 r_{Qp} +b_1 
\pmod{q^{\ell+2+\nu}}$ so 
\begin{equation}\label{eq cong 2}
q^{\nu+2} \mid ( p(a_1 r_Q +b_1)/{q^{\ell}} - 
(a_1 r_{Qp} +b_1){q^{\ell}}).
\end{equation}
Since 
$(W/ q^{\ell}, q^{\nu+2})=1 $, we have that 
$q^{\nu+2} \mid ( p(a_1 r_Q +b_1)/{W} - 
(a_1 r_{Qp} +b_1)/{W} )= p\ell_Q - p\ell_{Qp}$.
Recall that $q=p$, so we get that  
$q^{\nu+1} \mid (\ell_Q - \ell_{Qp})$. 
Finally, as $\nu+1$ is bigger than the exponent $\nu_{q}(q_{f,2})$ of $q$ in 
$q_{f,2}$ we get that $q^{\nu_{q}(q_{f,2})}\mid (\ell_Q - \ell_{Qp})$.

Combining the previous, we conclude that for every prime $q$ we have that  
$q^{\nu_{q}(q_{f,2})}\mid   \ell_Q - \ell_{Qp} $, which then implies that 
$q_{f,2}\mid  \ell_Q - \ell_{Qp} $, i.e., 
$\ell_Q \equiv \ell_{Qp} \pmod{q_{f,2}}$.
From that we obtain 
\begin{equation*}
    \chi_{f,2}(\ell_{Qp})=\chi_{f,2}(\ell_Q).
\end{equation*}
which proves (2) and concludes the proof of the Claim \ref{Claim: congruence relations for l_Q}.
\renewcommand\qedsymbol{$\triangle$}
\end{proof}

Next, we establish similar properties for $\chi_{g,1}, \chi_{g,2}$.
\begin{claim}\label{Claim: congruence relations for m_Q}
    If $p\nmid a_1$, then for all except at most $2^{K(K-1)}$ elements 
    $Q \in \Phi_K$, we have
\begin{enumerate}
    \item $\chi_{g,1}(u^{-1}m_{Qp})
=\overline{\chi_{g,1}(p)}\chi_{g,1}(u^{-1}m_Q)$,
    \item $\chi_{g,2}(u^{-1}m_Q)= \chi_{g,2}(u^{-1}m_{Qp})$.
\end{enumerate}
\end{claim}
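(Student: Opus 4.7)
The plan is to mirror the proof of Claim~\ref{Claim: congruence relations for l_Q} with the roles of the two congruences swapped, now working with $m_Q=(a_2r_Q+b_2)/A$ in place of $\ell_Q$. Since $p\nmid a_1$, under $Q\mapsto Qp$ the factor $A$ is unchanged while $W\mapsto Wp$. We restrict to those $Q\in \Phi_K$ for which $Qp\in \Phi_K$, which as in Claim~\ref{Claim: congruence relations for l_Q} excludes at most $2^{K(K-1)}$ elements. Part (1), dealing with primes $q\mid q_{g,1}$, forces $q\mid a_1$ and will be controlled by \eqref{E: congruence 2} (the congruence at primes dividing $A$); part (2), dealing with $q\mid q_{g,2}$, forces $q\nmid a_1$ and will be controlled by \eqref{E: congruence 1} (the congruence at primes dividing $W$).

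For part (1), I would fix a prime $q\mid q_{g,1}$ and write $q^{\ell}\parallel A$, noting $\ell>\mu$ for $K$ large. Applying \eqref{E: congruence 2} to both $r_Q$ and $r_{Qp}$ and multiplying by $a_2W$, respectively $a_2Wp$, gives
$$W\cdot Am_Q\equiv Ab_1\equiv Wp\cdot Am_{Qp}\pmod{q^{\ell+1+\mu}}.$$
Writing $A=q^{\ell}A_0$ with $(A_0,q)=1$, canceling $q^{\ell}A_0$ from both sides, and then the coprime factor $W$, yields $m_Q\equiv pm_{Qp}\pmod{q^{1+\mu}}$, hence modulo $q^{\nu_q(q_{g,1})}$. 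Multiplying these congruences across all $q\mid q_{g,1}$ gives $m_Q\equiv pm_{Qp}\pmod{q_{g,1}}$. Since $q_{g,1}\mid a_1^{\infty}$ and $(u,a_1)=1$, we have $(u,q_{g,1})=1$, so multiplying by $u^{-1}$ modulo $q_{g,1}$ and applying $\chi_{g,1}$ yields the desired identity. Well-definedness of $\chi_{g,1}(u^{-1}m_Q)$ follows from $q\nmid Wm_Q\equiv b_1$ together with $(b_1,a_1)=1$.

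For part (2), I would fix a prime $q\mid q_{g,2}$, so that $q\nmid a_1$. For $K$ large, $q\leq p_K$ and $q^{\ell}\parallel W$ with $\ell>2^K$. Applying \eqref{E: congruence 1} to $r_Q$ and $r_{Qp}$ (both congruences holding modulo $q^{\ell+1+\nu}$, the stronger $q=p$ case weakened to this modulus) and using $u=a_1b_2-a_2b_1$, I compute
$$a_2r_Q+b_2\equiv a_1^{-1}(a_2W+u),\qquad a_2r_{Qp}+b_2\equiv a_1^{-1}(a_2Wp+u)\pmod{q^{\ell+1+\nu}}.$$
Dividing by $A$ (coprime to $q$) and subtracting, the difference satisfies $\nu_q(m_{Qp}-m_Q)\geq \ell$. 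Writing $u=q^s u_0$ with $(u_0,q)=1$, where $s=\nu_q(u)$ is bounded independently of $K$, I divide by $u$ (legitimate since $u$ divides both $m_Q$ and $m_{Qp}$) to obtain $\nu_q(u^{-1}m_{Qp}-u^{-1}m_Q)\geq \ell-s>\nu\geq\nu_q(q_{g,2})$ for $K$ sufficiently large. Ranging over $q\mid q_{g,2}$ then gives $u^{-1}m_Q\equiv u^{-1}m_{Qp}\pmod{q_{g,2}}$, and applying $\chi_{g,2}$ concludes.

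The main technical obstacle will be the division-by-$u$ step in part (2): the primes $q\mid q_{g,2}$ may actually divide $u$ (through the primes shared by $u$ and $W$), so one cannot simply invert $u$ modulo $q_{g,2}$. The resolution is to exploit that the $q$-adic valuation of $W$ grows like $2^K$ while $\nu_q(u)$ is bounded, leaving ample $q$-adic slack to preserve the needed congruence after cancelling $u$.
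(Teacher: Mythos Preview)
Your proposal is correct and follows essentially the same approach as the paper. The only cosmetic differences are that in part (2) the paper immediately drops the $a_1^{-1}W$ term (since $q^{\ell}\mid W$) and works modulo $q^{\ell}$ from the start, treating the cases $q=p$ and $q\neq p$ separately, whereas you keep the full congruence modulo $q^{\ell+1+\nu}$, handle both cases uniformly, and subtract---arriving at the same conclusion $\nu_q(m_{Qp}-m_Q)\geq\ell$ and the same division-by-$u$ endgame.
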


\begin{proof}[Proof of Claim \ref{Claim: congruence relations for m_Q}]
As in the previous claim, we will work with the elements $Q\in \Phi_K $ for which
$Qp \in \Phi_K$ (the exceptional set has at most $2^{K(K-1)}$ elements).
Let $Q=AW$, $Qp=A' W'$ and again observe that $W'=Wp$, $A=A'$. 
We know that $u=a_1 b_2 -a_2b_1 \mid W $, so if $p\mid u $, then 
$p\nmid a_1 $. Define $$\gamma:=\max\{ \kappa \in \N \colon \text{ there is } p\in 
\P \text{ with } p^{\kappa}\mid u \}.$$  

Let $q\in \P$ so that $q\mid q_{g,1}$ and $q^{\ell} \parallel A=A'$.
Then, using \eqref{E: congruence 2} we infer that
$$r_Q \equiv a_2^{-1}A b_1 W^{-1} -b_2 a_2^{-1} \pmod{q^{\ell+1+\mu}} \:\: 
\text{ and } \:\: 
r_{Qp} \equiv a_2^{-1}A b_1 W^{-1}p^{-1} -b_2 a_2^{-1} \pmod{q^{\ell+1+\mu}}, $$
from which we deduce that 
$a_2r_Q + b_2 \equiv Ab_0 W^{-1} \equiv p(a_2r_{Qp}+b_2) \pmod{q^{\ell+1+\mu}}$,
and hence $(a_2r_Q + b_2)/q^{\ell} \equiv p(a_2r_{Qp}+b_2)/q^{\ell}
\pmod{q^{\mu+1}}.$

Since $(q^{\ell}, A/q^{\ell})=1$, from the previous we get that 
$(a_2r_Q + b_2)/A \equiv p(a_2r_{Qp}+b_2)/{A} 
\pmod{q^{\mu+1}}$, i.e., $m_Q \equiv p m_{Qp} \pmod{q^{\mu+1}}$. Recall that 
$(q, u)=1$, and therefore 
$u^{-1}m_Q\equiv p u^{-1}m_{Qp}
\pmod{q^{\mu+1}}.$
Since $\nu_q(q_{g,1})\leq \mu $ we conclude that
$u^{-1}m_Q \equiv p u^{-1} m_{Qp}
\pmod{q^{\nu_q(q_{g,1})}}.$

In short, for every prime $q$ with $q\mid q_{g,1}$ we have that  
$u^{-1} m_Q \equiv p u^{-1}m_{Qp} \pmod{q^{\nu_{q}(q_{g,1})}}$, and therefore
$u^{-1} m_Q \equiv p u^{-1}m_{Qp} \pmod{q_{g,1}}$. 
From that we get $\chi_{g,1}\left(u^{-1}m_Q\right) 
=\chi_{g,1}(p) \chi_{g,1}\left(u^{-1}m_{Qp}\right)$,
or equivalently (as $|\chi_{g,1}(p)|=1$), 
\begin{equation*}
\chi_{g,1}(u^{-1}m_{Qp})
=\overline{\chi_{g,1}(p)}\chi_{g,1}(u^{-1}m_Q),
\end{equation*}
which establishes (1) above.

Now, take $q\in \P$ so that $q\mid q_{g,2}$ and $q^{\ell}\parallel W$. We can
also assume (by taking $K$ large enough) that the inequality 
$\ell>\gamma+ \nu_q (q_{g,2})$ holds.

If $q\neq p $, then $q^{\ell}\parallel W, q^{\ell}\parallel Wp$ and from 
\eqref{E: congruence 1} we have that

$$ r_Q \equiv -a_1^{-1}b_1  \pmod{q^{\ell}} \:\:
\text{ and } \:\:
r_{Qp} \equiv -a_1^{-1}b_1 \pmod{q^{\ell}},$$
which implies that $a_2r_Q + b_2 \equiv a_2r_{Qp} + b_2  \pmod{q^{\ell}}$. Now, 
since $(q, A)=1$ and $A=A'$, we get that 
$(a_2r_Q + b_2)/A \equiv (a_2r_{Qp} + b_2)/A  \pmod{q^{\ell}}$,
i.e., $m_Q \equiv m_{Qp} \pmod{q^{\ell}}$. From this we get 
$m_Q /q^{\nu_q(u)} \equiv m_{Qp}/q^{\nu_q(u)} \pmod{q^{\ell-\nu_q(u)}}.$
Since $\nu_q(u)\leq \gamma$, we have that 
$\ell- \nu_q(u) > \nu_q (q_{g,2}) $, and therefore 
$m_Q/ q^{\nu_q(u)} \equiv 
m_{Qp}/q^{\nu_q(u)} \pmod{q^{\nu_q{(q_{g,2})}}}.$
Finally, since $(q, u/{q^{\nu_q(u)}})=1$, 
the last relation yields $u^{-1}m_Q \equiv 
u^{-1} m_{Qp} \pmod{q^{\nu_q(q_{g,2})}}$, i.e., 
$q^{\nu_q(q_{g,2})} \mid u^{-1}m_Q - u^{-1} m_{Qp}.$

On the other hand, if $q=p$, then 
$p^{\ell}\parallel W$, $p^{\ell+1}\parallel Wp=W'$ and from \eqref{E: congruence 1} we have
that $$ r_Q \equiv -a_1^{-1}b_1  \pmod{p^{\ell}} \:\:
\text{ and } \:\:
r_{Qp} \equiv -a_1^{-1}b_1 \pmod{p^{\ell}},$$
which implies that $a_2r_Q + b_2 \equiv a_2r_{Qp} + b_2  \pmod{p^{\ell}}$.  
Since $(p, A)=1$ and $A=A'$, we get that 
$(a_2r_Q + b_2)/A \equiv (a_2r_{Qp} + b_2)/A  \pmod{p^{\ell}}$,
i.e., $m_Q \equiv m_{Qp} \pmod{p^{\ell}}$, which implies that 
$m_Q/p^{\nu_p(u)} \equiv m_{Qp}/p^{\nu_p(u)} \pmod{p^{\ell-\nu_p(u)}}.$
Since $\nu_p(u)\leq \gamma$, we have that 
$\ell- \nu_p(u) > \nu_p (q_{g,2}) $, and therefore 
$m_Q/p^{\nu_p(u)} \equiv m_{Qp}/p^{\nu_p(u)} \pmod{p^{\nu_p{(q_{g,2})}}}.$
Finally, since $(p,u/p^{\nu_p(u)})=1$, again we conclude that
$u^{-1}m_Q\equiv u^{-1}m_{Qp} \pmod{p^{\nu_p(q_{g,2})}},$ i.e., 
$p^{\nu_p(q_{g,2})} \mid u^{-1}m_Q - u^{-1}m_{Qp}$.

To summarize, we have shown that for every prime $q$ with $q\mid q_{g,2}$ we 
have that $q^{\nu_{q}(q_{g,2})}\mid u^{-1}m_Q \equiv u^{-1} m_{Qp}$, which 
implies that $q_{g,2}\mid u^{-1}m_Q \equiv u^{-1} m_{Qp}$. From the last relation
we infer that 
\begin{equation*}
    \chi_{g,2}(u^{-1}m_Q)= \chi_{g,2}(u^{-1}m_{Qp}),
\end{equation*}
which establishes (2) and concludes the proof of Claim 
\ref{Claim: congruence relations for m_Q}.
\renewcommand\qedsymbol{$\triangle$}
\end{proof}

Now we return to the proof of the case (iii) in the proposition. If there is 
some prime $p$ with $p\nmid a_1$ and 
$f(p)\neq p^{it_f} \chi_{f,1}(p) \chi_{g,1}(p)$,
then \begin{multline*}
   f(p) p^{-it_f}\overline{ \chi_{f,1}(p)\chi_{g,1}(p)} \:
   \mathlarger{\cesE}_{Q\in \Phi_K}f(W)W^{-it_f}g(A)A^{it_f}
    \chi_f(\ell_Q)\chi_g\left(u^{-1}m_Q\right)\\
    =\mathlarger{\cesE}_{Q\in \Phi_K}f(pW)(pW)^{-it_f}g(A)A^{it_f}
    \chi_{f,1}(\ell_{Qp})\chi_{f,2}(\ell_{Qp})
    \chi_{g,1}\left(u^{-1}m_{Qp}\right)\chi_{g,2}\left(u^{-1}m_{Qp}\right) 
    +\Oh\left(\frac{1}{2^K}\right),
\end{multline*}
where we used Claims 
\ref{Claim: congruence relations for l_Q} and
\ref{Claim: congruence relations for m_Q}, 
and the error term arises from the exceptional sets in Claims 
\ref{Claim: congruence relations for l_Q} and
\ref{Claim: congruence relations for m_Q}. 
Using the fact that $\Phi_K$ is a multiplicative F{\o}lner sequence, 
the last expression can be rewritten as 
\begin{equation*}
    \mathlarger{\cesE}_{Q\in \Phi_K}f(W)(W)^{-it_f}g(A)A^{it_f}
    \chi_{f}(\ell_{Q})
    \chi_{g}\left(u^{-1}m_{Q}\right)+\oh_{K\to\infty}(1).
\end{equation*}
We conclude that 
\begin{equation*}
    \left(f(p) p^{-it_f}\overline{ \chi_{f,1}(p)\chi_{g,1}(p)}-1 \right)\mathlarger{\cesE}_{Q\in \Phi_K}f(W)W^{-it_f}g(A)A^{it_f}
    \chi_f(\ell_Q)\chi_g\left(u^{-1}m_Q\right)
    =\oh_{K\to \infty}(1).
\end{equation*}
Since we assumed that there exists a prime $p\nmid a_1$ for which $f(p)\neq p^{it_f} \chi_{f,1}(p) \chi_{g,1}(p)$, we conclude that \begin{equation*}
    \mathlarger{\cesE}_{Q\in \Phi_K}f(W)W^{-it_f}g(A)A^{it_f}
    \chi_f(\ell_Q)\chi_g\left(u^{-1}m_Q\right)
    =\oh_{K\to\infty}(1).
\end{equation*}which contradicts \eqref{E: the negation of part iii in Proposition 4.1}. We conclude that for every $p\nmid a_1$, we have $$f(p)= p^{it_f} \chi_{f,1}(p) \chi_{g,1}(p),$$ which is the desired conclusion.

Finally, assume that $g=\overline{f}$.
Assume that $f$ is pretentious and the averages in \eqref{E: final expression} are positive, that is 
\begin{equation}\label{E: final expression_f_f}
    \limsup_{K\to\infty}\limsup_{X\to\infty} \left|
    \mathlarger{\cesE}_{Q\in \Phi_K}f(W)W^{-it_f}\overline{f(A)} A^{it_f}
    \chi_f(\ell_Q)\overline{\chi_f\left(u^{-1}m_Q \right)}\right|>0.
\end{equation}

As before, we get that if there is $p\nmid a_1$ so that $f(p)\neq p^{it_f} 
\chi_{f,1} (p) \overline{\chi}_{f,1}(p)$, or equivalently $f(p)\neq p^{it_f}$, then the 
averages in \eqref{E: final expression_f_f} are $0$. Hence, for all 
$p\nmid a_1 $, $f(p)=p^{it_f}$, which implies that $ \D(f(n), n^{it_f})<+\infty$. 
Combining this with the assumption $\D(f(n), \chi_f(n) n^{it_f})<+\infty$ we get that $\D(\chi_f, 1)<+\infty$, so $\chi_f$ is the principal character of conductor $q_f$. 

Recall that
$(\ell_Q, Q)=(Q, u^{-1}m_Q)=1$, so for $K$ sufficiently 
large and $Q\in \Phi_K$, $(\ell_Q, q_f)=(q_f, u^{-1}m_Q)=1$, 
which implies that $\chi_f(\ell_Q)=
\overline{\chi}_f\left(u^{-1}m_Q\right)=1$, since $\chi_f$ is principal. Therefore, combining this with the fact that $f(p)=p^{it_f} $ for $p\nmid a_1$ we get that the 
averages in \eqref{E: final expression_f_f} are equal to 
$\mathlarger{\cesE}_{Q\in \Phi_K}\overline{f}(A) A^{it_f}$. 
If there is some $p\mid a_1$ so that 
$f(p) \neq p^{it_f}$, then Lemma \ref{L: multiplicative averages} implies that 
\begin{equation*}
   (\overline{f(p)} p^{it_f} -1) 
   \mathlarger{\cesE}_{Q\in \Phi_K} \overline{f(A)}A^{it_f}=
    \oh_{K\to \infty}(1),
\end{equation*}
and,  since $\overline{f(p)} p^{it_f} \neq 1 $, we conclude that
$ \mathlarger{\cesE}_{Q\in \Phi_K} \overline{f(A)}A^{it_f} \to 0$ as $K\to\infty$, which is a
contradiction. Therefore, we obtain that for each $p\mid a_1$, 
$f(p)=p^{it_f}$, and combining this with the fact that $f(p)=p^{it_f}$ for all $p\nmid a_1$,  we conclude that 
$f(p)=p^{it_f} $ for all $p\in \P$.
\end{proof}

\section{Concluding the proof of \cref{T: optimal conditions for Bohr recurrence}}\label{section result f-f}
\label{section_return-times}

In this Section we finish the proof of Theorem \ref{T: optimal conditions for Bohr recurrence}, and we also prove Remark \ref{rem lower density}.
Note that in Theorem \ref{T: optimal conditions for Bohr recurrence}, 
the implication (1) $\implies$ (2) is obvious, 
and we proved implication (2) $\implies$ (3) in Section 
\ref{section_optimal-conditions}. 
Therefore, it remains to establish the implication (3) $\implies$ (1),
that is, for $a\in \N$ and $b,d \in \Z$ with 
$(a,b,d)=1$, if $b=d$ or $a\mid bd$, then for all $f\in \M$ and $\e>0$ 
the set $\A(f,\e)$ defined in \eqref{set of ret times} has positive
upper logarithmic
density. Furthermore, if $f$ is pretentious or finitely generated, then 
$\A(f, \e)$ has positive lower logarithmic density. 

In case $b=d$, then for all $f\in \M$ and $\e>0$, $\A(f,\e)=\N$, so the 
desired conclusions follow trivially.
Therefore, from now on we assume without loss of generality that $b\neq d$
and $a\mid bd$. Then our results follow from the following proposition:

\begin{proposition}\label{C: a=c and au mid bd imply recurrence}
    Let $a\in \N$, $b,d\in \Z$ such that $(a,b,d)=1$, $b\neq d$ and $a\mid bd$.
    Then for any $\e>0$ and any completely multiplicative 
    $f\colon \N \to \mathbb{S}^1$, the set  
    \begin{equation*}
    \A(f,\e) =\left\{n\in \N\colon  \left|f(an+b) -f(an+d)\right|<\e\right\}
    \end{equation*}has positive upper logarithmic density. Furthermore, if $f$ is pretentious or finitely generated, it has positive lower logarithmic density.
\end{proposition}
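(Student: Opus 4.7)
The plan is to apply Proposition~\ref{P: binary correlations proposition} to the pair $(f, g = \overline f)$ after a preliminary factorization exploiting $a \mid bd$. Since $(a, b, d) = 1$ and $a \mid bd$, for each prime $p \mid a$ exactly one of $b, d$ is divisible by $p$ (otherwise $p \mid (a,b,d)$), and the corresponding $p$-adic valuation is at least $v_p(a)$. This yields a decomposition $a = u_1 u_2$ with $u_1 \mid b$, $u_2 \mid d$ and $(u_1, u_2) = 1$. Setting $\tilde b = b/u_1$, $\tilde d = d/u_2$, complete multiplicativity gives
$$f(an+b)\,\overline{f(an+d)} \;=\; f(u_1)\overline{f(u_2)}\cdot f(u_2 n + \tilde b)\,\overline{f(u_1 n + \tilde d)},$$
and one checks $(u_2, \tilde b) = (u_1, \tilde d) = (u_1, u_2) = 1$ together with $u_2 \tilde d - u_1 \tilde b = d - b \neq 0$, so Proposition~\ref{P: binary correlations proposition} applies with $a_1 = u_2$, $a_2 = u_1$, $b_1 = \tilde b$, $b_2 = \tilde d$ and $g = \overline f$.

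The strategy is then to show that
$$\mathcal{C}_{K,X} \;:=\; \cesE_{Q \in \Phi_K}\ \logE_{n \leq X}\ f\bigl(a(Q^2 n + r_Q) + b\bigr)\,\overline{f\bigl(a(Q^2 n + r_Q) + d\bigr)} \;\longrightarrow\; 1$$
as $K \to \infty$ and $X \to \infty$ appropriately, where $r_Q$ is the CRT solution used in the proof of Proposition~\ref{P: binary correlations proposition}. Since $|z-w|^2 = 2 - 2\Re(z\overline w)$ for $z,w \in \mathbb{S}^1$ and $|f(an+b) - f(an+d)|^2 \in [0,4]$, Markov's inequality then translates this limit into the desired positive log-density of $\mathcal A(f, \varepsilon)$ after unfolding the averages over the arithmetic progressions $r_Q + Q^2\N$.

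In the pretentious case $\D(f, \chi_f n^{it_f}) < +\infty$, Proposition~\ref{P: binary correlations proposition}(iii) applied to $(f, \overline f)$ yields two alternatives. In the structured subcase $f(n) = n^{it_f}$, one directly computes $|f(an+b) - f(an+d)| = |(an+b)^{it_f} - (an+d)^{it_f}| = O_{t_f}(1/n) \to 0$, so $\mathcal A(f, \varepsilon)$ has full lower log density. Otherwise, I apply the concentration estimate (Lemma~\ref{L: concentration estimate}) directly to each of the two factors $f(u_2 Q^2 n + u_2 r_Q + \tilde b)$ and $f(u_1 Q^2 n + u_1 r_Q + \tilde d)$, replacing each by an expression of the form $\chi_f(\cdot)(\cdot)^{it_f}\exp(F(Q,X))$ up to an $o_{K\to\infty}(1)$ error. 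The oscillating $\exp F$ factors cancel in the product $f \cdot \overline f$; the ratio of the two $(\cdot)^{it_f}$ terms tends to $(u_2/u_1)^{it_f}$; and the congruence conditions \eqref{E: congruence 1}--\eqref{E: congruence 2} defining $r_Q$, combined with $u_1 u_2 = a$ and $f \sim \chi_f n^{it_f}$, force the remaining $\chi_f$ factors to combine with the prefactor $f(u_1)\overline{f(u_2)}$ so that $\mathcal C_{K,X} \to 1$. This gives positive lower log density.

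For non-pretentious $f$, the argument bifurcates. If $f$ is finitely generated, Proposition~\ref{P: finitely generated non-pretentious is strongly aperiodic} makes $f$ strongly aperiodic, and I instead expand the indicator of a small arc $A \subset \mathbb{S}^1$ around $1$ as a Fourier series $\mathbf 1_A(z) = \sum_\ell c_\ell z^\ell$, apply the pretentious/strongly-aperiodic dichotomy (via Corollary~\ref{C: minimal power that makes f pretentious}) to each power $f^\ell$ separately, use Proposition~\ref{P: binary correlations proposition}(i) to kill the strongly-aperiodic contributions, and extract positive lower log density from the $\ell = 0$ Lebesgue mass together with the computable pretentious-power contributions. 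In the general non-pretentious case (where only upper log density is claimed), I pass to a subsequence of scales $X_k$ tailored to the local pretentious behavior of $f$ — provided by Corollary~\ref{C: locally pretentious on all scales implies pretentious} — and run the pretentious argument locally on $[1, X_k]$. The main obstacle is the strongly aperiodic, non-finitely-generated regime: coordinating a common subsequence $X_k$ on which enough powers $f^\ell$ simultaneously exhibit controllable behavior, so that the Fourier tail remains small, is delicate precisely because different powers can fail strong aperiodicity on different scales; this diagonal step is exactly where the stronger lower-density statement breaks, and is the reason it is restricted to the pretentious and finitely generated settings.
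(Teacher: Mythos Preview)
Your overall architecture (factorize via $a\mid bd$, pass to progressions $Q^2n+r_Q$, apply Proposition~\ref{P: binary correlations proposition}) matches the paper, but the pretentious case contains a genuine error. You claim that in the ``otherwise'' branch (pretentious $f$ with $f\not\equiv n^{it}$) the congruences on $r_Q$ force the concentration-estimate output to recombine so that $\mathcal C_{K,X}\to 1$. This is false: Proposition~\ref{P: binary correlations proposition}(iii) with $g=\overline f$ says precisely that in this branch
\[
\lim_{K\to\infty}\limsup_{X\to\infty}\Bigl|\,\cesE_{Q\in\Phi_K}\logE_{n\leq X}\,f(a_1Q^2n+a_1r_Q+b_1)\,\overline{f(a_2Q^2n+a_2r_Q+b_2)}\Bigr|=0,
\]
so $\mathcal C_{K,X}\to 0$, not $1$. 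Concretely, after the concentration estimate one is left with an average over $\Phi_K$ of the shape $\cesE_{Q\in\Phi_K} f(W)W^{-it_f}\overline{f(A)}A^{it_f}\chi_f(\ell_Q)\overline{\chi_f(u^{-1}m_Q)}$ (this is \eqref{E: final expression_f_f}); the multiplicative F{\o}lner averaging annihilates this unless $f(p)=p^{it_f}$ for every prime. There is no cancellation with the prefactor $f(u_1)\overline{f(u_2)}$ that rescues a limit of $1$, and a Markov argument based on $\mathcal C_{K,X}\to 1$ cannot succeed.

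The paper's fix is exactly the Fourier expansion you already use in the finitely-generated non-pretentious case: approximate the indicator of the arc by $\e+\Re\bigl(\sum_{1\leq|\ell|<R}c_\ell e(\ell x)\bigr)$, so the target becomes showing that $\sum_{\ell\neq 0} c_\ell\,\logE f^{\ell}(\cdot)\overline{f^{\ell}}(\cdot)$ has real part $>-\e^2$. Positivity comes from the $\ell=0$ Lebesgue term; the $\ell\neq 0$ correlations only need to be \emph{small}, and this is what Proposition~\ref{P: binary correlations proposition} (parts (i) and (iii)) delivers for each $f^{\ell}$. You should run this Fourier argument uniformly across all cases, not just the non-pretentious one. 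Note also a residual case you have not addressed: if some power $f^{\ell}$ is identically $n^{it}$, part~(iii) gives no smallness for that $\ell$; the paper handles this separately in Proposition~\ref{P: main proposition for f-f in the essentially finite valued case} by reducing to a finite-valued function and using an exact root-of-unity cancellation identity.
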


In order to prove Proposition \ref{C: a=c and au mid bd imply recurrence},
we will need to separate our analysis into two cases depending on whether or 
not some power of $f$ is identically equal to $n^{it}$ for some $t\in \R$. 
Then Proposition \ref{C: a=c and au mid bd imply recurrence} follows from
Propositions \ref{P: Main proposition for f-f} and \ref{P: main proposition for f-f in the essentially finite valued case} below:

\begin{proposition}\label{P: Main proposition for f-f}
    Let $a\in \N$, $b,d \in \Z$ such that $(a,b,d)=1$, $b\neq d$ and 
    $a\mid bd$. Let $f\in \M$, and assume that there are no $k\in \N$ and 
    $t\in \R$ so that $f^{k}$ is identically equal to $n^{it}$. Then for any 
    $\e>0$, 
    the set  
    \begin{equation*}
    \A(f,\e) =\left\{n\in \N\colon  \left|f(an+b) -f(an+d)\right|<\e\right\}
    \end{equation*}
    has positive upper logarithmic density. Furthermore, if $f$ is pretentious or finitely generated, it has positive lower logarithmic 
    density.
\end{proposition}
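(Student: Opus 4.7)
The plan is to combine a Fej\'er-type lower bound for $\mathbf{1}_{\mathcal{A}(f,\varepsilon)}$ on the unit circle with \cref{P: binary correlations proposition} applied to each nonzero power of $f$. Setting $F(n):=f(an+b)\overline{f(an+d)}$, one has $\mathcal{A}(f,\varepsilon)=\{n\in\N : |F(n)-1|<\varepsilon\}$. Since $(a,b,d)=1$ and $a\mid bd$, I factor $a=a_1 a_2$ with $a_1\mid b$, $a_2\mid d$ and $(a_1,a_2)=1$; writing $b=a_1 b_1'$ and $d=a_2 d_1'$, complete multiplicativity yields $F(n)=f(a_1)\overline{f(a_2)}\cdot f(a_2n+b_1')\overline{f(a_1n+d_1')}$, and the linear pair $(a_2n+b_1',\,a_1n+d_1')$ satisfies the hypotheses of \cref{P: binary correlations proposition} since $a_2 d_1'-a_1 b_1'=d-b\neq 0$ and the required coprimalities follow from $(a,b,d)=1$. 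A standard Fej\'er mollification then provides a trigonometric polynomial $p(z)=\sum_{|\ell|\leq L}c_{\ell}z^{\ell}$ and $\eta>0$ with $c_0-\eta>0$ and $p(z)-\eta\leq \mathbf{1}_{|z-1|<\varepsilon}(z)$ for every $z\in\mathbb{S}^1$.

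Fix $K$ and, for each $Q\in\Phi_K$, take $r_Q$ as in \cref{P: binary correlations proposition}, with the CRT moduli $\mu,\nu$ (from the proof of that proposition) chosen large enough to serve every power $f^{\ell}$ with $|\ell|\leq L$ simultaneously; this is possible since the conductors of the pretentious characters attached to $f^{\ell}$ are bounded in terms of $f$ and $L$. Substituting $z=F(Q^2 n+r_Q)$ into $p-\eta\leq \mathbf{1}_{|z-1|<\varepsilon}$ and averaging logarithmically in $n\leq X$ and Ces\`aro in $Q\in\Phi_K$ gives
\begin{equation*}
\mathbb{E}_{Q\in\Phi_K}\logE_{n\leq X}\mathbf{1}_{\mathcal{A}(f,\varepsilon)}(Q^2n+r_Q) \geq (c_0-\eta)+\sum_{0<|\ell|\leq L}c_{\ell}\,\mathbb{E}_{Q\in\Phi_K}\logE_{n\leq X} F^{\ell}(Q^2n+r_Q).
\end{equation*}
For each $\ell\neq 0$ I apply \cref{P: binary correlations proposition} to $(f^{\ell},\overline{f^{\ell}})$: if $f^{\ell}$ is pretentious, part~(iii) with $g=\overline{f}$ leaves only the degenerate case $f^{\ell}(n)=n^{it}$, excluded by hypothesis; if $f^{\ell}$ is non-pretentious and $f$ is finitely generated, \cref{P: finitely generated non-pretentious is strongly aperiodic} ensures $f^{\ell}$ is strongly aperiodic, so part~(i) gives vanishing as $X\to\infty$; in the remaining non-pretentious case I use \cref{C: locally pretentious on all scales implies pretentious} to produce a subsequence $N_k^{(\ell)}\to\infty$ on which $f^{\ell}$ satisfies the distance hypothesis of part~(i). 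Intersecting the finitely many $\ell$-subsequences yields a common $X_m\to\infty$ (one may take $X_m=m$ when every $f^{\ell}$ is pretentious or strongly aperiodic) along which every correlation in the sum vanishes as $m\to\infty$ and then $K\to\infty$.

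Consequently, for $m$ and $K$ sufficiently large, $\mathbb{E}_{Q\in\Phi_K}\logE_{n\leq X_m}\mathbf{1}_{\mathcal{A}(f,\varepsilon)}(Q^2n+r_Q)\geq (c_0-\eta)/2$. Since $\Phi_K$ is finite, pigeonhole produces some $Q^*\in\Phi_K$ and a sub-subsequence of $(X_m)$ along which the inner average at $Q=Q^*$ stays above $(c_0-\eta)/(2|\Phi_K|)$. The substitution $M=Q^{*2}n+r_{Q^*}$ inside $\sum_{M\leq Y,\, M\in\mathcal{A}} M^{-1}$ yields the elementary estimate
\begin{equation*}
\overline{d}_{\log}(\mathcal{A}(f,\varepsilon)) \geq \frac{1}{2Q^{*2}}\limsup_{m\to\infty}\logE_{n\leq X_m}\mathbf{1}_{\mathcal{A}(f,\varepsilon)}(Q^{*2}n+r_{Q^*})>0,
\end{equation*}
which proves positivity of the upper logarithmic density. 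For the lower density claim, when $f$ is pretentious or finitely generated the correlation vanishing in the previous paragraph holds as $X\to\infty$ (not only along a subsequence), so the lower bound $(c_0-\eta)/2$ holds for \emph{every} large $X$. If $\underline{d}_{\log}(\mathcal{A}(f,\varepsilon))=0$, one picks $X_k\to\infty$ with $\logE_{M\leq X_k}\mathbf{1}_{\mathcal{A}}(M)\to 0$; the slow-growth bound $\sum_{M\leq 2Q^2 X_k}\mathbf{1}_{\mathcal{A}}(M)/M \leq \sum_{M\leq X_k}\mathbf{1}_{\mathcal{A}}(M)/M+O(\log Q)$ forces $\logE_{M\leq 2Q^2 X_k}\mathbf{1}_{\mathcal{A}}(M)\to 0$ for each fixed $Q\in\Phi_K$, and finiteness of $\Phi_K$ contradicts the lower bound.

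The main obstacle is the general non-pretentious case: the scales on which \cref{P: binary correlations proposition}(i) applies depend on $\ell$, and only the bounded degree $L$ of the polynomial $p$ allows me to intersect finitely many such scale subsequences into a single one. This is precisely why the upper density estimate survives in full generality, whereas the lower density estimate requires $f$ to be pretentious or finitely generated.
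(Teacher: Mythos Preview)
Your overall architecture matches the paper's: Fej\'er minorant, reduction to correlations $\logE_{n\leq X} f^{\ell}(a_0Q^2n+a_0r_Q+b_0)\overline{f^{\ell}(vQ^2n+vr_Q+d_0)}$, split into pretentious and non-pretentious $f^{\ell}$, and apply the three parts of \cref{P: binary correlations proposition}. The factorization $a=a_1a_2$ with $a_1\mid b$, $a_2\mid d$, $(a_1,a_2)=1$ is correct, and your remark that a single choice of $r_Q$ (with $\mu,\nu$ taken as maxima over the finitely many conductors) serves all $|\ell|\leq L$ is a point the paper leaves implicit.

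There is, however, a genuine gap in the non-pretentious step. For each non-pretentious $f^{\ell}$, \cref{C: locally pretentious on all scales implies pretentious} produces a subsequence $N_k^{(\ell)}\to\infty$ along which the distance from all twisted characters blows up; but these subsequences are produced independently and there is no reason their ranges intersect in an infinite (or even nonempty) set. ``Intersecting the finitely many $\ell$-subsequences'' is not a valid operation here: one can easily imagine $f^{\ell_1}$ having large distance only at scales where $f^{\ell_2}$ is locally pretentious, and vice versa. The boundedness of $L$ does not help, since \cref{C: locally pretentious on all scales implies pretentious} gives no control on the subsequence beyond its existence.

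The paper resolves this with a dichotomy on the \emph{product} exponent. Letting $\B_{1,+}=\{a_1,\ldots,a_{k_1}\}$ be the positive non-pretentious exponents, one asks whether $f^{a_1\cdots a_{k_1}}$ is pretentious. If it is, then for each $a_i$ the function $f^{a_i}$ is non-pretentious while some power of it is pretentious, so \cref{C: non-pretentious almost supported on roots of unity is strongly aperiodic} forces $f^{a_i}$ to be strongly aperiodic and $X_m=m$ works for all $i$ simultaneously. If $f^{a_1\cdots a_{k_1}}$ is non-pretentious, one applies \cref{C: locally pretentious on all scales implies pretentious} to this \emph{single} function to obtain one subsequence $(X_m)$; then, since each $a_i$ divides $a_1\cdots a_{k_1}$, the pretentious triangle inequality $\D(f^{a_i},\chi n^{it};1,X_m)\geq \frac{a_i}{a_1\cdots a_{k_1}}\D\bigl(f^{a_1\cdots a_{k_1}},\chi^{(a_1\cdots a_{k_1})/a_i}n^{it(a_1\cdots a_{k_1})/a_i};1,X_m\bigr)$ transfers the blow-up to every $f^{a_i}$ along the \emph{same} $(X_m)$. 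This product-exponent trick is the missing idea in your argument.

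A smaller point on the lower-density claim: after pigeonholing to a single $Q^{*}$, the paper needs the $\B_2$ contribution at that fixed $Q^{*}$ to be controlled for \emph{all} large $X$, not just along a subsequence; it secures this by invoking the existence of $\lim_{X\to\infty}\logE_{n\leq X} f^{\ell}(a_0Q^{*2}n+\cdots)\overline{f^{\ell}(vQ^{*2}n+\cdots)}$ for pretentious $f^{\ell}$ (Klurman; Frantzikinakis--Lema\'nczyk--de la Rue), which upgrades the $\limsup$ bound from part~(iii) to a genuine limit. Your contradiction argument via a hypothetical density-zero subsequence is a valid alternative route once the $\B_1$ terms are known to vanish as $X\to\infty$, but it still tacitly relies on the $\B_2$ average over $\Phi_K$ being small for all large $X$; this does follow from $\limsup_X|\E_{Q}\logE_n\cdots|$ being small, so your argument works there.
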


\begin{proposition}\label{P: main proposition for f-f in the essentially finite valued case}
    Let $a\in \N$, $b,d \in \Z$ such that $(a,b,d)=1$, $b\neq d$ and 
    $a\mid bd$. Let $f\in \M$, and assume that there are $k\in \N$ and 
    $t\in \R$ so that $f^{k}$ is identically equal to $n^{it}$.
    Then for any $\e>0$ the set  
    \begin{equation*}
    \A(f, \e) =\left\{n\in \N\colon  \left|f(an+b) -f(an+d)\right|<\e\right\}
    \end{equation*}
    has positive lower logarithmic density.
\end{proposition}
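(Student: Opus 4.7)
My plan is to split $f=\chi\cdot n^{it/k}$ with $\chi\in\M$ finite-valued (values in $k$-th roots of unity) and $\chi^{k}\equiv 1$. After replacing $k$ with the order of $\chi$, I may assume it equals $k$, so that $\chi^{j}\not\equiv 1$ for $1\leq j\leq k-1$. First I would handle the archimedean factor: since $(an+b)^{it/k}/(an+d)^{it/k}\to 1$ uniformly in $n$ and $|\chi|=1$, for every $\e>0$ there is $N=N(\e)$ with
\[
\{n\geq N:\chi(an+b)=\chi(an+d)\}\subseteq \A(f,\e),
\]
so it suffices to establish $\underline{d}_{\log}(\A')>0$, where $\A'=\{n\in\N:\chi(an+b)=\chi(an+d)\}$.

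For this reduced problem I apply orthogonality of $k$-th roots of unity to write $\mathbf{1}_{\chi(an+b)=\chi(an+d)}=\frac{1}{k}\sum_{j=0}^{k-1}\chi^{j}(an+b)\overline{\chi^{j}(an+d)}$. The hypothesis $a\mid bd$ together with $(a,b,d)=1$ yields the factorization $a=\alpha\beta$ with $\alpha\mid b$, $\beta\mid d$, $(\alpha,\beta)=1$; writing $b=\alpha b'$ and $d=\beta d'$ one checks that $(\beta,b')=(\alpha,d')=1$ and $\beta d'\neq\alpha b'$, so the shifted pair $(\beta n+b',\alpha n+d')$ satisfies the coprimality and non-degeneracy hypotheses of \cref{P: binary correlations proposition} with $a_{1}=\beta,a_{2}=\alpha,b_{1}=b',b_{2}=d'$. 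For each $1\leq j\leq k-1$: if $\chi^{j}$ is non-pretentious then \cref{C: finite valued non-pretentious is strongly aperiodic} shows it is strongly aperiodic, and \cref{T: Tao theorem} forces $\logE_{n\leq X}\chi^{j}(\beta n+b')\overline{\chi^{j}(\alpha n+d')}\to 0$; if instead $\chi^{j}$ is pretentious, I apply \cref{P: binary correlations proposition}(iii) to the pair $(\chi^{j},\overline{\chi^{j}})$, noting that its second alternative would force $\chi^{j}(n)=n^{it'}$ identically, which is incompatible with $\chi^{j}$ being nontrivial and finite-valued. Choosing the CRT parameters in the definition of $r_{Q}$ large enough to accommodate the conductors of all pretentious $\chi^{j}$ at once, the same $r_{Q}$ serves every $j$.

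Summing the Fourier expansion over $j$, evaluated at $n=Q^{2}n'+r_{Q}$, averaging over $n'\leq X$ and then over $Q\in\Phi_{K}$, only the $j=0$ contribution survives, giving
\[
\cesE_{Q\in\Phi_{K}}\logE_{n'\leq X}\mathbf{1}_{\A'}(Q^{2}n'+r_{Q})=\frac{1}{k}+\oh_{K\to\infty,\,X\to\infty}(1).
\]
The final step of the plan is to transfer this Cesaro--multiplicative estimate into a lower bound for the pointwise logarithmic average $\logE_{m\leq X}\mathbf{1}_{\A'}(m)$. I would argue that the finite-valuedness of $\chi$, combined with the concentration estimate \cref{L: concentration estimate} applied to the pretentious $\chi^{j}$, forces each individual logarithmic average $\logE_{n\leq X}\chi^{j}(an+b)\overline{\chi^{j}(an+d)}$ to converge as $X\to\infty$ to a $Q$-independent limit, which by the preceding display must equal $0$ for $j\neq 0$. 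This yields $\lim_{X\to\infty}\logE_{m\leq X}\mathbf{1}_{\A'}(m)=1/k$, and hence $\underline{d}_{\log}(\A')=1/k>0$.

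The main obstacle is precisely this last step: upgrading the multiplicative Cesaro vanishing of \cref{P: binary correlations proposition}(iii) into genuine pointwise convergence of the logarithmic averages. For generic aperiodic functions this upgrade fails, as the counterexamples in \cite{MRT_averaged} show, and similar oscillation is in principle possible for pretentious functions once the imaginary part of $\sum_{p}(\chi^{j}(p)\overline{\psi_j(p)}-1)/p$ diverges. It is precisely the finite-valuedness of $\chi^{j}$ coming from the hypothesis $f^{k}\equiv n^{it}$ that is expected to force the ``error'' sets $\{p:\chi^{j}(p)\neq\psi^{j}(p)\}$ to be so structured that the concentration estimate yields a true limit rather than an oscillatory Cesaro average. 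Making this step precise, uniformly in the finitely many Dirichlet characters $\psi^{j}$ and in the progressions $\{Q^{2}n+r_{Q}\}$, is the most delicate component of the argument and the reason that the finite-valuedness hypothesis is genuinely used to obtain \emph{lower} logarithmic density rather than only \emph{upper} logarithmic density as in \cref{P: Main proposition for f-f}.
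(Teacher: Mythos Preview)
Your reduction and orthogonality decomposition are essentially identical to the paper's proof. The paper likewise twists away $n^{it}$, passes to a finite-valued $g\in\M$ with $g^{\ell_1}\equiv 1$ ($\ell_1$ minimal), reduces to showing that $\A'=\{n:g(an+b)=g(an+d)\}$ has positive lower logarithmic density, and treats each $g^m$ for $1\leq m<\ell_1$ exactly as you do: \cref{C: finite valued non-pretentious is strongly aperiodic} plus \cref{T: Tao theorem} in the aperiodic case, and \cref{P: binary correlations proposition}(iii) in the pretentious case, the exceptional alternative $g^m(n)=n^{it'}$ being ruled out by minimality of $\ell_1$.

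The only divergence is the endgame, and the ``main obstacle'' you identify is a phantom. Your display
\[
\cesE_{Q\in\Phi_K}\ \logE_{n\leq X}\ \1_{\A'}(Q^{2}n+r_{Q})\ =\ \tfrac{1}{k}+\oh_{K\to\infty,\,X\to\infty}(1)
\]
already implies $\underline{d}_{\log}(\A')>0$ by an elementary argument; you do not need pointwise convergence of the individual correlations. The paper does this by contradiction: if $\underline{d}_{\log}(\A')=0$, choose $X_k$ with $\logE_{m\leq X_k}\1_{\A'}(m)\to 0$; then for each fixed $Q$ the trivial comparison $\logE_{n\leq X_k}\1_{\A'}(Q^2n+r_Q)\ll_{Q}\logE_{m\leq X_k}\1_{\A'}(m)+\oh_{k\to\infty}(1)$ forces the left-hand side of your display to vanish along $X_k$, a contradiction. (The paper phrases this slightly differently: it averages the identity $-1=\sum_{m=1}^{\ell_1-1}g^m(an+b)\overline{g^m(an+d)}$, valid on $\N\setminus\A'$, along the progression $Q^2n+r_Q$ for $n\leq X_k$ and over $Q\in\Phi_K$, arriving at $-1=\oh_{K\to\infty}(1)$.) Your proposed detour through \cref{L: concentration estimate} to establish genuine convergence of the pretentious correlations is therefore unnecessary --- though, for the record, that convergence is in fact available via \cite{correl_klu} or \cite{fra-lem-deL}, and the paper uses it elsewhere (\cref{subsec lower density}).
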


\subsection{An approximation of the indicator function $\1_{[0,\e]\cup(1-\e, 1)}$ on $\T$}\label{fourier approx}
We will need an approximation of
the indicator function of the interval $[0,\e)\cup(1-\e,1)$ by 
trigonometric polynomials. Consider a continuous function $h_{\e}\colon \T \to
[0,1]$ so that $h_{\e}(x) \leq \1_{[0,\e]\cup(1-\e, 1)}(x)$ and 
$\int_{0} ^1 h_{\e}(x) dx =\e$ (for example we can take $h_{\e}$ to be 
an appropriate trapezoid function).

Since $h_{\e}$ is continuous, we know that the Ces\`aro averages of its 
Fourier partial sums converge uniformly to $h_{\e}$. Namely, there exists a
sequence $(c_{\ell,R})_{\ell\in \Z,R\in \N}\subset \C $, such that 
\begin{equation}\label{eq four app 1}
    \norm{h_{\e}(x)-\sum_{|\ell|<R}c_{\ell,R}e(\ell x)}_{\infty}=
    \sup_{x\in \T} \left| h_{\e}(x)-\sum_{|\ell|<R}c_{\ell,R}e(\ell x) \right|
    \to 0
\end{equation}
as $R\to \infty$, where $c_{0,R} = \int_{0}^1 h_{\e}(x) dx=\e$.
Then, we may pick $R$ sufficiently large so that the quantity in 
\eqref{eq four app 1} is smaller than $\e^2$, and if $\e$ is sufficiently small,
for this $R$ we have 
\begin{equation}\label{E: lower bound for h_T}
  h_{\e}(x)\geq  \e+\Re\left( \sum_{1\leq |\ell|<R}c_{\ell, R}e(\ell x)  
  \right)-\e^2 \geq 
  \e^2+\Re\left( \sum_{1\leq |\ell|<R}c_{\ell,R}e(\ell x)  \right)
\end{equation} 
for all $x\in [0,1]$. From now on, we denote $c_{\ell,R}$ simply by $c_{\ell}$ for convenience.

\subsection{The proof of Proposition \ref{P: Main proposition for f-f}}
We are now ready to prove Proposition \ref{P: Main proposition for f-f}.
Let $a\in \N$, $b,d \in \Z$ such that $(a,b,d)=1$, $b\neq d$ and $a\mid bd$. 
Fix a function $f\in \M$ and $\e>0$.
Let $n$ be a natural number not in $\A(f,\e)$, so that 
\begin{equation}
    \left|f(an+b)-f(an+d)\right|\geq \e.
\end{equation} This implies that $$\frac{1}{2\pi}\arg\left(f(an+b)\overline{f(an+d)}\right)\notin \left(-\frac{\e}{2\pi}, \frac{\e}{2\pi}\right), $$ 
where we use the principal branch of the argument function. Redefining $\e$ to absorb the constant $1/(2\pi)$, we have that
$$n\in \A\iff \left\{ \frac{1}{2\pi}\arg\left(f(an+b)\overline{f(an+d)}\right)
\right\}\in [0,\e)\cup(1-\e,1),$$ 
where $\big\{\frac{1}{2\pi}\arg\big(f(an+b)\overline{f(an+d)}\big)
\big\}$ denotes the fractional part of $\frac{1}{2\pi}\arg
\big(f(an+b)\overline{f(an+d)}\big)$.
Our task is to show that \begin{equation}\label{E: set A has positive log density}
    \limsup_{X\to\infty}\: \mathlarger{\logE}_{n\leq X} 
    \1_{\A(f,\e)}(n) >0.
\end{equation}

\subsubsection{A reduction to binary correlations of multiplicative functions}\label{subsec red to mult}
We will now use the approximation carried out in Section
\ref{fourier approx} to reduce the problem to studying 
binary correlations of multiplicative functions. Since 
\begin{multline}\label{a useful inequality}
\1_{\A(f,\e)}(n)=\1_{ [0,\e)\cup(1-\e,1)}
\left(\left\{\frac{1}{2\pi} 
\arg\left(f(an+b)\overline{f(an+d)}\right)\right\}\right)\\
\geq h_{\e}\left(\left\{\frac{1}{2\pi} 
\arg\left(f(an+b)\overline{f(an+d)}\right)\right\}\right),
\end{multline}
it suffices to show that 
\begin{equation}\label{E: function h_T has positive log density}
    \limsup_{X\to\infty}\: \mathlarger{\logE}_{n\leq X} 
   h_{\e}\left(\left\{\frac{1}{2\pi} 
    \arg\left(f(an+b)\overline{f(an+d)}\right)\right\}\right)>0.
\end{equation}

In order to show that \eqref{E: function h_T has positive log density} holds, it suffices to show that there exists $L\in \N$ and $0\leq r_L\leq L-1$ such that 
\begin{equation}\label{E: function h_T has positive log density on progression}
     \limsup_{X\to\infty} \mathlarger{\logE}_{n\leq X} 
     h_{\e} \left(\left\{\frac{1}{2\pi} \arg\left(f(a(Ln+r_L)+b)\overline{f(a(Ln+r_L)+d)}\right)\right\}\right)>0.
\end{equation}This follows from the inequality 
\begin{equation}\label{E: relation between averages along progressions and normal averages}
    \mathlarger{\logE}_{n\leq X} a(Ln+r_L)
    \leq L\left(1+\Oh\left(\frac{\log L}{\log X}\right)\right)
    \mathlarger{\logE}_{n\leq LX+r_L} a(n)
\end{equation}
for any non-negative sequence $a(n)$, which is proved by dropping all the terms on the right-hand side that are not equal to $r_L$ modulo $L$.

Applying \eqref{E: lower bound for h_T}, we see that \eqref{E: function h_T has positive log density on progression} would follow if we could prove that \begin{multline*}
    \limsup_{X\to\infty}\Re\left( \sum_{1\leq |\ell|<R} c_{\ell}\:
    \mathlarger{\logE}_{n\leq X}
    e\left(\ell \left(\frac{1}{2\pi}\arg(f(a(Ln+r_L)+b)\overline{f(a(Ln+r_L)+d)})\right)\right)\right)> -\e^2. 
\end{multline*}

Thus, we will establish our claim if we show that there exist
$L\in \N$ and $0\leq r_L\leq L-1$ such that 
\begin{equation}\label{f-f conclusion 1}
    \limsup_{X\to\infty}\Re\left( \sum_{1\leq |\ell|<R} 
    c_{\ell}\: \mathlarger{\logE}_{n\leq X}     
    f^{\ell}\left(a(Ln+r_L)+b\right)\overline{f^{\ell}
    \left(a(Ln+r_L)+d\right)}\right)> -\e^2.
\end{equation}

\subsubsection{Correlations of multiplicative functions along progressions}
We will now use Proposition \ref{P: binary correlations proposition} to find 
$Q\in \N$ so that \eqref{f-f conclusion 1} holds with 
$L=Q^2$ and $r_L=r_Q$. 

Let $v=(a,b)$ and write $a=va_0$ and $b=vb_0$. Since $a\mid bd$ and 
$a,b,d$ are coprime, we conclude that $(v,d)=1$. Furthermore, we have 
$a_0\mid b_0d\implies a_0\mid d $. Writing $d=d_0a_0$, we conclude that
$(d_0,v)=1$. By factoring out the greatest common divisors and 
redefining $c_{\ell}$ to absorb the terms 
$f^{\ell}(v)\overline{f^{\ell}(a_0)}$, we can rewrite \eqref{f-f conclusion 1}
as
\begin{equation}\label{f-f conclusion 2}
     \limsup_{X\to\infty}\Re\left( \sum_{1\leq|\ell|<R} 
     c_{\ell}\: \mathlarger{\logE}_{n\leq X} 
   f^{\ell}\left(a_0Ln+a_0r_L+b_0\right)
    \overline{f^{\ell}\left(vLn+vr_L+d_0\right)}\right)> -\e^2.
\end{equation}

We let $K$ be a fixed large parameter and define
the set $\Phi_K$ as in \eqref{E: definition of multiplicative Folner} and for each $Q\in \Phi_K$, we let $0\leq r_Q\leq Q^2-1$ be the numbers provided by Proposition \ref{P: binary correlations proposition}. These numbers exist because the coprimality conditions $(a_0,b_0)=(v,d_0)=(a_0,v)=1$ are satisfied.

We will prove that if $K$ is sufficiently large, then for some 
$Q\in \Phi_K$ we have that 
\begin{equation*}
    \limsup_{X\to\infty}\Re\left( \sum_{1\leq|\ell|<R} 
    c_{\ell}\: \mathlarger{\logE}_{n\leq X} 
    f^{\ell}\left(a_0Q^2n+a_0r_Q+b_0\right)\overline{f^{\ell}
    \left(vQ^2n+vr_Q+d_0\right)}\right)> -\e^2
\end{equation*}
In order to prove this, we will show that the double average \begin{equation}\label{E: shit equation that I will only cite one time}
    \mathlarger{\cesE}_{Q\in \Phi_K} 
    \sum_{1\leq|\ell|<R} c_{\ell}\: \mathlarger{\logE}_{n\leq X_m} 
    f^{\ell}\left(a_0Q^2n+a_0r_Q+b_0\right)
    \overline{f^{\ell}\left(vQ^2n+vr_Q+d_0\right)}
\end{equation}
is very close to zero along some subsequence $X_m\to\infty$, for $K$ large
and $m\to\infty$. 
This would imply that there exists at least one $Q\in \Phi_K$ for which \begin{equation*}
      \limsup_{m\to\infty}\Re\left( \sum_{1\leq|\ell|<R} 
      c_{\ell}\: \mathlarger{\logE}_{n\leq X_m} 
      f^{\ell}\left(a_0Q^2n+a_0r_Q+b_0\right)
      \overline{f^{\ell}\left(vQ^2n+vr_Q+d_0\right)}\right)> -\e^2
\end{equation*}
holds.

\subsubsection{Separating pretentious and non-pretentious functions}\label{subsec X_k}

We split $\{-R+1,\ldots,R-1\}\setminus\{0\}$ into two sets $\B_1$ and $\B_2$, where $\B_2$ consists of those integers $\ell\in [-R+1,R-1]\setminus\{0\} $ such that $f^{\ell}$ is pretentious. The sets $\B_1,\B_2$ are symmetric around 0. We let $\B_{1,+}=\{a_1,\ldots, a_{k_1}\}$ and $\B_{2,+}=\{b_1,\ldots,b_{k_2}\}$ denote the sets of positive elements of $\B_1$ and $\B_2$ respectively,
where all elements are written in increasing order.

We consider two cases:

i) Assume that $f^{a_1\ldots a_{k_1}}$ is pretentious. Then, Corollary \ref{C: non-pretentious almost supported on roots of unity is strongly aperiodic} implies that $f^{a_i}$ is strongly aperiodic for any $i\in
\{1,\ldots,k_1\}$. Thus, for any $B>0$ we have 
\begin{equation*}
    \lim\limits_{X\to\infty} \inf_{|t|\leq BX, q\leq B} \D(f^{a_i}(n),\chi(n)n^{it}; 1,X)=+\infty
\end{equation*} for all $a_i\in \A_{+}$.

Furthermore, if $f$ is pretentious, then $f^{a_1\ldots a_{k_1}}$ is pretentious and, thus, we always land in this case.

ii) Assume that $f^{a_1\ldots a_{k_1}}$ is non-pretentious and let $B>0$ be arbitrary. Then, 
Corollary \ref{C: locally pretentious on all scales implies pretentious}
implies that there exists an increasing sequence of integers 
$(X_m)_{m\in\N}$ such that 
$$\inf_{|t|\leq (a_1\ldots 
a_k)BX_m, q\leq B} \D(f^{a_1\ldots a_{k_1}}(n),\chi(n)n^{it}; 
1,X_m)\to+\infty.$$
Using the triangle inequality, we deduce that for any 
$t\in [-BX_m,BX_m]$ and any character $\chi$ of conductor at most $B$, we have 
$$\D(f^{a_i}(n),\chi(n)n^{it}; 1,X_m)\geq \frac{a_1\ldots a_{k_1}}{a_i}
\D(f^{a_1\ldots a_k}(n),\chi^{\frac{a_1\ldots a_{k_1}}{a_i}}(n)
n^{it\frac{a_1\ldots a_{k_1}}{a_i}}; 1,X_m)$$
which implies that for any $a_i$ we have $$\lim\limits_{m\to\infty} 
\inf_{|t|\leq BX_m,q\leq B} \D(f^{a_i}(n),\chi(n)n^{it}; 1,X_m)=+\infty.$$

Furthermore, if $f$ is finitely generated, then $f^{a_1\ldots a_{k_1}}$ is finitely
generated and, thus, strongly aperiodic. In this case, we can take $X_m=m$ for all
$m\in \N$.

To summarize, we deduce (in both cases) that for any $B>0$, there exists a 
sequence of integers $X_m\to\infty$ such that 
\begin{equation}\label{E: large distance along subsequence}
    \lim\limits_{m\to\infty} \inf_{|t|\leq BX_m,q\leq B} \D(f^{a_i}(n),\chi(n)n^{it}; 1,X_m)=+\infty
\end{equation}
for all $a_i \in \B_{1,+}$. Furthermore, when $f$ is pretentious or finitely generated we can take $X_m=m$. From now on, we will work with averages 
along $(X_m)_{m\in \N}$ and the constant $B$ will be specified later.

We write 
\begin{align*}
   E_1(K,X_m)=\sum_{\ell\in \B_1}c_{\ell}\: \mathlarger{\cesE}_{Q\in \Phi_K}\:
     \mathlarger{\logE}_{n\leq X_m} 
     f^{\ell}\left(a_0Q^2n+a_0r_Q+b_0\right)\overline{f^{\ell}\left(vQ^2n+
     vr_Q+d_0\right)}\\
     E_2(K,X_m)=\sum_{\ell\in \B_2}c_{\ell}\: \mathlarger{\cesE}_{Q\in \Phi_K}
     \:\mathlarger{\logE}_{n\leq X_m} 
     f^{\ell}\left(a_0Q^2n+a_0r_Q+b_0\right)
     \overline{f^{\ell}\left(vQ^2n+vr_Q+d_0\right)}
\end{align*}
and we will show that both expressions have real parts very close to zero.

\subsubsection{Concluding the proof}\label{subsec concluding the proof}
For $\ell \in \B_2$, $f^{\ell}$ is pretentious, so by Proposition 
\ref{P: binary correlations proposition} we get that  
\begin{equation*}
  \limsup_{X\to\infty}\left|  \mathlarger{\cesE}_{Q\in \Phi_K}\:
     \mathlarger{\logE}_{n\leq X} f^{\ell}\left(a_0Q^2n+a_0r_Q+b_0\right)
     \overline{f^{\ell}\left(vQ^2n+vr_Q+d_0\right)}\right|=o_{K\to \infty}(1),
\end{equation*}
unless there exists $t\in \R$ such that $f^{\ell}(n)=n^{it}$. 
However, our assumptions on $f$ in the hypothesis of Proposition 
\ref{P: Main proposition for f-f}
exclude this possibility and thus the last asymptotic holds
for 
all $\ell\in \B_2$. Therefore, by picking $K$ sufficiently large, we deduce that 
\begin{equation}\label{E: bound for E_2 _1}
    \left|E_2(K,X_m)\right|\leq \frac{\e^2}{4}
\end{equation}for all $m$ sufficiently large.

For this $K$, we have that for all $m$ sufficiently large,
\begin{multline}\label{E: bound for E_2 _2}
    \left|\mathlarger{\cesE}_{Q\in \Phi_K} \Re\left( \sum_{\ell\in \B_2}
    c_{\ell} \: \mathlarger{\logE}_{n\leq X_m} 
    f^{\ell}\left(a_0Q^2n+a_0r_Q+b_0\right)
    \overline{f^{\ell}\left(vQ^2n+vr_Q+d_0\right)} \right)\right|\\
    =\left|\Re \left(E_2(K,X_m)\right)\right|\leq \frac{\e^2}{4},
\end{multline}
from which we obtain that there is some $Q\in \Phi_K$ so that 
\begin{equation}\label{E: bound for E_2 _3}
\limsup_{m\to \infty} \Re\left( \sum_{\ell\in \B_2}c_{\ell} \:
     \mathlarger{\logE}_{n\leq X_m} 
     f^{\ell}\left(a_0Q^2n+a_0r_Q+b_0\right)
     \overline{f^{\ell}\left(vQ^2n+vr_Q+d_0\right)} \right) \geq  -
     \frac{\e^2}{4}.
\end{equation}

Now, from Theorem \ref{T: Tao theorem} we know that there exists $B'>0$ such 
that if the 1-bounded multiplicative function $g$ satisfies 
\begin{equation*}
     \left|\mathlarger{\logE}_{n\leq X} 
g\left(a_0Q^2n+a_0r_Q+b_0\right)\overline{g\left(vQ^2n+vr_Q+d_0\right)}
     \right|\geq \frac{\e^2}{4\sum_{1\leq |\ell|<R}{|c_{\ell}|}},
\end{equation*}
and $X$ is sufficiently large, then 
\begin{equation*}
    \inf_{|t|\leq B'X,q\leq B'}\D(g(n),\chi(n)n^{it};1,X)\leq B'.
\end{equation*}
Applying this for all instances of $Q\in \Phi_K$, we find a $B'$ that works for all $Q$.

We pick our $B$ above to be equal to $B'$. Our construction of the sequence $(X_m)_{m\in\N}$ implies that for all 
$\ell\in \B_1$, the function $f^{\ell}$ satisfies 
\eqref{E: large distance along subsequence}. Therefore, we have 
\begin{equation*}
     \left|\mathlarger{\logE}_{n\leq X_m}  
     f^{\ell}\left(a_0Q^2n+a_0r_Q+b_0\right)\overline{f^{\ell}
     \left(vQ^2n+vr_Q+d_0\right)}\right|< 
     \frac{\e^2}{4\sum_{1\leq |\ell|<R}{|c_{\ell}|}}
\end{equation*}
for all $\ell\in \B_1$ and all $m$ sufficiently large, 
and thus 
\begin{equation}\label{E: bound for E_1 _1}
    \limsup_{m\to \infty} \left| 
    \Re\left( \sum_{\ell\in \B_1}c_{\ell} \:
     \mathlarger{\logE}_{n\leq X_m}  
     f^{\ell}\left(a_0Q^2n+a_0r_Q+b_0\right)
     \overline{f^{\ell}\left(vQ^2n+vr_Q+d_0\right)} \right)
    \right|< \frac{\e^2}{4}.
\end{equation}

Combining \eqref{E: bound for E_2 _3} and \eqref{E: bound for E_1 _1} we 
obtain that 
\begin{equation*}
\limsup_{m\to \infty} \Re\left( \sum_{1\leq  |\ell| < R}  
     c_{\ell}\: 
     \mathlarger{\logE}_{n\leq X_m} 
     f^{\ell}\left(a_0Q^2n+a_0r_Q+b_0\right)
     \overline{f^{\ell}\left(vQ^2n+vr_Q+d_0\right)} \right) > - \e ^2,
\end{equation*}
i.e., \eqref{f-f conclusion 2} holds with $L=Q^2$ and $r_L =r_Q$. This 
concludes the proof that $\A(f,\e)$ has positive upper logarithmic density. 

\subsubsection{Positive lower logarithmic density}\label{subsec lower density}
If $f$ is pretentious or finitely generated, we have 
seen that we can pick $X_m=m$. Working as in the previous case, we can find 
some $K\in \N$ and $Q\in \Phi_K$ so that 
\begin{equation}\label{E: bound for E_2 _3 _lim}
\limsup_{X\to \infty} \Re\left( \sum_{\ell\in \B_2}c_{\ell} \:
     \mathlarger{\logE}_{n\leq X} 
     f^{\ell}\left(a_0Q^2n+a_0r_Q+b_0\right)
     \overline{f^{\ell}\left(vQ^2n+vr_Q+d_0\right)} \right) \geq -
     \frac{\e^2}{4}.
\end{equation}
For each $\ell\in \B_2$ the limit $$\lim_{X\to \infty} 
 \mathlarger{\logE}_{n\leq X} 
     f^{\ell}\left(a_0Q^2n+a_0r_Q+b_0\right)
     \overline{f^{\ell}\left(vQ^2n+vr_Q+d_0\right)}$$
exists (see \cite[Theorem 1.5]{correl_klu} or \cite[Theorem 2.14]{fra-lem-deL}), 
which implies that in \eqref{E: bound for E_2 _3 _lim} we can
replace the $\limsup_{X \to \infty}$ by $\lim_{X\to \infty}$, so 
\begin{equation}\label{E: bound for E_2 _3 _lim _1}
    \lim_{X\to \infty} \Re\left( \sum_{\ell\in \B_2}c_{\ell}\: 
    \mathlarger{\logE}_{n\leq X}
     \frac{f^{\ell}\left(a_0Q^2n+a_0r_Q+b_0\right)
     \overline{f^{\ell}\left(vQ^2n+vr_Q+d_0\right)}}{n} \right) \geq  -
     \frac{\e^2}{4}.
\end{equation}
Then as in \ref{subsec concluding the proof}, and since $X_m=m$, 
we have that  
\begin{equation}\label{E: bound for E_1 _1 _lim}
    \limsup_{X\to \infty} \left| 
    \Re\left( \sum_{\ell\in \B_1}c_{\ell} \:
     \mathlarger{\logE}_{n\leq X}
     \frac{f^{\ell}\left(a_0Q^2n+a_0r_Q+b_0\right)
     \overline{f^{\ell}\left(vQ^2n+vr_Q+d_0\right)}}{n} \right)
    \right|< \frac{\e^2}{4}
\end{equation}
so from \eqref{E: bound for E_2 _3 _lim _1} and 
\eqref{E: bound for E_1 _1 _lim} we obtain that 
\begin{equation*}
    \liminf_{X\to \infty} 
    \Re\left( \sum_{1\leq |\ell| < R} c_{\ell}\: 
     \mathlarger{\logE}_{n\leq X} 
     f^{\ell}\left(a_0Q^2n+a_0r_Q+b_0\right)
     \overline{f^{\ell}\left(vQ^2n+vr_Q+d_0\right)} \right) > -\e ^2.
\end{equation*}
Combining the previous with \eqref{a useful inequality} we infer that 
\begin{equation}\label{E: lower log density}
    \liminf_{X \to \infty} \mathlarger{\logE_{n\leq Q^2X+r_Q}}{\1_{\A(f,\e)}(n)} >0.
\end{equation}
Then writing $Y\in \N$ as $Y=Q^2 \lfloor Y/Q^2\rfloor  + r_Y$, where 
$r_Y\in \{0,1, \dots, Q^2 -1\}$, we have that
\begin{align*}
    \mathlarger{\logE}_{n\leq Y} {\1_{\A(f,\e)}(n)} & \geq  
    \frac{\log\left({ Q^2 \left( \lfloor Y/Q^2\rfloor -1\right)  + 
    r_Q} \right)}{\log\left({ Q^2 \left( \lfloor Y/Q^2 \rfloor 
    -1\right)  + r_Q} \right) + \log\left(\dfrac{ Q^2 \lfloor Y/Q^2 \rfloor  
    + r_Y}{ Q^2 \left( \lfloor Y/Q^2\rfloor -1\right)
    + r_Q} \right)} \\
    & \hspace*{4.5cm}
    \mathlarger{\logE}_{n\leq Q^2 \left(\lfloor Y/ Q^2 
    \rfloor -1 \right) + r_Q } {\1_{\A} (n)}
\end{align*}
so then taking $\liminf_{Y\to \infty}$, and using that 
$$\lim_{Y\to\infty}\frac{ Q^2 \left\lfloor Y/Q^2\right\rfloor + r_Y}{ Q^2 \left(\left\lfloor Y/Q^2\right\rfloor -1\right) + r_Q}
= 1,$$
as well as \eqref{E: lower log density},
we get that $\A(f,\e)$ has positive lower logarithmic density. 

\subsection{The proof of Proposition \ref{P: main proposition for f-f in the essentially finite valued case}}
Let us now give the proof of Proposition \ref{P: main proposition for f-f in the essentially finite valued case}. Let $a\in \N$, $b,d \in \Z$ such that 
$(a,b,d)=1$, $b\neq d$ and $a\mid bd$. Fix a function $f\in \M$ such that
$f^k(n)=n^{it}$ for some $k\in \N$ and some $t\in \R$. Also fix some 
$\e>0$. We will prove that the set $\A(f, \e)$ appearing in
Proposition \ref{P: main proposition for f-f in the essentially finite valued case} has positive lower logarithmic density. 

Similarly to the proof of Proposition \ref{P: Main proposition for f-f}, 
let $v=(a,b)$. Then for each $n\in \N$, $an+b=v(a_0n +b_0), an+d= a_0(vn+d_0)$,
where $(a_0,b_0)=(v,d_0)=(v,a_0)=1$. 

Let $\ell_0$ be the minimal natural number so that $f^{\ell_0}$ is pretentious,
and let $\chi,t$ be such that $\D(f^{\ell_0}(n), \chi(n) n^{it})<+\infty$. Let 
$\ell_1 \in \N$ be the minimal
natural number such that there is $t'\in\R$ for which $f^{\ell_1}(n) = n^{it'}$ for 
all $n \in \N$. Since $f^{\ell_1}$ is pretentious, Corollary \ref{C: minimal power that makes f pretentious} implies that $\ell_0\mid \ell_1$. Furthermore, since $\D\left(f^{\ell_1}(n),(\chi(n)n^{it})^{\ell_1 /\ell_0}\right)<+\infty$ by the triangle inequality, we conclude that $t'=\ell_1t/\ell_0$ and $\chi^{\ell_1/\ell_0}$ is principal.

Writing $g(n)=f(n)n^{-it/\ell_0}$ and using the approximation 
$$(an+b)^{it/\ell_0} =(an+d)^{it/\ell_0 }+O_{a,b,d}
\left(\frac{|t|}{\ell_0 n}\right)$$ 
one sees that it suffices to prove that
\begin{equation}
\A(g,\e) =\left\{n\in \N\colon  \left|g(an+b) -g(an+d)\right|<\e\right\}
\end{equation}
has positive lower logarithmic density. In addition, we have that $\ell_0$ is the smallest integer such that $g^{\ell_0}$ is pretentious and $\ell_1$ is the smallest integer such that $g^{\ell_1}(n)=n^{it}$ for some $t\in \R$. In fact, we have $g^{\ell_1}$ is identically 1.

We may assume that $\ell_1>1$, since otherwise we would have that $g$ is
identically $1$, from which the result follows trivially.
To prove our result, we will show that the set 
$$\A'= \left\{n\in \N\colon  g(an+b) =g(an+d)\right\}$$
has positive lower logarithmic density.

Assume that this is not the case.  
Since $g(an+b),g(an+d)$ are $\ell_1$-roots of unity, we have that if $n\notin \A'$, then $g(an+b)\overline{g(an+d)}\in \left\{
e(j/\ell_1)\colon j\in \{1, \dots, \ell_1 -1\}\right\}$ and, thus, 
$$\sum_{m=0}^{\ell_1-1} g^m(an+b)\overline{g^m(an+d)}=0.$$
We conclude that for a set of upper logarithmic density 1, we have 
\begin{equation}\label{E: the bad n's sum up to -1}
    -1=\sum_{m=1}^{\ell_1-1} g^m(an+b)\overline{g^m(an+d)}=
\sum_{m=1}^{\ell_1-1} g^m(v)\overline{g^m(a_0)} g^m(a_0n+b_0)\overline{g^m(vn+d_0)}.
\end{equation}

The set $\A'$ for which the last equality does not hold has lower logarithmic density 0. We let $(X_k)_{k\in\N}$ be a sequence of integers for which the limit zero is realized.
For any arithmetic progression $Ln + r_L$, a calculation similar to the one in \eqref{E: relation between averages along progressions and normal averages} implies that we have \begin{equation*}
    \mathlarger{\logE}_{n\leq X_k/L}{\1_{\A'}(Ln+r_L)}=
    o_{k\to \infty}(1).
\end{equation*}
We may shift the averaging from $X_k/L$ to $X_k$ at the cost of an $\Oh(\log L/\log X_k)$ error term.

As in the previous proposition, we let $K$ be sufficiently large and
we take our progression to be $Q^2n +r_Q$, where $Q\in \Phi_K$ 
($\Phi_K$ is defined in \eqref{E: definition of multiplicative Folner}) 
and the $r_Q$'s are chosen as in 
Proposition \ref{P: binary correlations proposition}.
We deduce that \begin{equation*}
     \mathlarger{\logE}_{n\leq X_k}{\1_{\A'}(Q^2n+r_Q)}=o_{k\to \infty}(1)+\Oh\left(\frac{\log Q}{\log X_k}\right).
\end{equation*}
Averaging over all $Q\in \Phi_K$, we conclude that \begin{equation*}
    \mathlarger{\cesE}_{Q\in \Phi_K}  \mathlarger{\logE}_{n\leq X_k}
    \1_{\A'}(Q^2n+r_Q)=o_{K;k\to \infty}(1).
\end{equation*}
Thus, if we average \eqref{E: the bad n's sum up to -1} along the progression $Q^2n+r_Q$ and then average over all $Q\in \Phi_K$, we deduce that the contribution of the $n$ for which $\1_{A'}(Q^2 n+r_Q)=1$ is negligible. Hence,
\begin{multline}\label{E: gamw ton peiraia}
    \sum_{m=1}^{\ell_1-1}g^m(v)\overline{g^m(a_0)} \:
    \mathlarger{\cesE}_{Q\in \Phi_K}  \mathlarger{\logE}_{n\leq X_k}
    g^m(a_0Q^2 n +a_0 r_Q +b_0)
    \overline{g^m(vQ^2 n + vr_Q  +d_0)}= \\
    -1+o_{K;k\to \infty}(1).
\end{multline}
However, we have that, for every $m\in\{1,\ldots,\ell_1-1\}$,\begin{equation}\label{E: final crap that we want to prove}
    \limsup_{k\to+\infty}\left|\mathlarger{\cesE}_{Q\in \Phi_K} \mathlarger{\logE}_{n\leq X_k}{g^m(a_0Q^2 n + a_0 r_Q +b_0)
    \overline{g^m(vQ^2 n+vr_Q +d_0)}}\right|=
    o_{K\to \infty}(1).
\end{equation}

Indeed, we show this by considering two cases. If $g^{m}$ is non-pretentious, 
then, by Corollary 
\ref{C: finite valued non-pretentious is strongly aperiodic}, it is strongly aperiodic since it is finite valued. Then, 
the result follows from a straightforward application of Theorem 
\ref{T: Tao theorem} for each $Q\in \Phi_K$ separately.

On the other hand, in the case that $g^m$ is pretentious, the third part of 
Proposition \ref{P: binary correlations proposition} implies that 
\eqref{E: final crap that we want to prove} holds unless there exists 
$t\in \R$ such that $g^m(n)=n^{it}$ for all $n\in \N$. However, we have 
that $\ell_1$ is the minimal exponent for which 
$g^{\ell_1}(n)=n^{it}$ for some $t\in \R$ and for all $n\in \N$,
and thus this exceptional case cannot occur.

We have proved that \eqref{E: final crap that we want to prove} holds. However,
this contradicts \eqref{E: gamw ton peiraia}. In conclusion, the set $\A'$ has
positive lower logarithmic density and this finishes the proof.

\section{Pairs of multiplicative functions}\label{proofs pairs}
In this section, we prove 
Theorem \ref{a result on pairs}.

\begin{proof}[Proof of Theorem \ref{a result on pairs}]
Let $a\in \N$, $f,g \in \M$ and $\e >0$. We will prove that the set 
$$\A(f,g,\e):=\left\{ n\in \N \colon | f(an+1) - g(an)| < \e \right\}.$$
has positive upper logarithmic density. 

Performing similar manipulations as in \ref{subsec red to mult}
we see that the result would follow if we prove that 
there exist $L\in \N$ and $0\leq r_L\leq L-1$ such that 
\begin{equation*}
    \limsup_{X\to \infty}\Re\left( \sum_{1\leq |\ell|<R} 
    c_{\ell}\: \mathlarger{\logE}_{n\leq X}     
    f^{\ell}\left(a(Ln+r_L)+1\right)\overline{g^{\ell}
    \left(a(Ln+r_L)\right)}\right)> -\e^2.
\end{equation*}
where $R\in \N$ and $c_{\ell}$ are fixed. We may redefine $c_{\ell}$ to 
absorb the term $\overline{g^{\ell}(a)}$, so then it suffices to prove that 
\begin{equation}\label{suffices 1}
    \limsup_{X\to \infty}\Re\left( \sum_{1\leq |\ell|<R} 
    c_{\ell}\: \mathlarger{\logE}_{n\leq X}     
    f^{\ell}\left(a(Ln+r_L)+1\right)\overline{g^{\ell}
    \left(Ln+r_L\right)}\right)> -\e^2.
\end{equation}

\underline{Case 1}: Assume that there is no $\ell \in \N$, $t\in \R$ and 
Dirichlet characters $\chi_f,\chi_g$ with 
conductors $q_f, q_g$ respectively so that $q_f\mid a^{\infty}$,
$f^{\ell}(p)=\chi_f(p)p^{it}$ for all primes $p$ with $p\nmid a$, and 
$\D(g(n),\chi_g(n)n^{it})<+\infty$.  
We split $\{-R+1,\ldots,R-1\}\setminus\{0\}$ into two sets $\B_1$ and 
$\B_2$, where $\B_2$ consists of those integers 
$\ell\in [-R+1,R-1]\setminus\{0\} $ such that $f^{\ell}$ is pretentious,
and as in Section \ref{subsec X_k}, we fix a $B>0$ to be chosen later
and a sequence $X_k\to \infty$ such that 
\begin{equation}\label{E: large distance along subsequence f-g}
    \lim\limits_{k\to \infty} \inf_{|t|\leq BX_k,q\leq B} \D(f^{\ell}(n),\chi(n)n^{it}; 1,X_k)=+\infty
\end{equation}
for all $\ell \in \B_{1}$.

For each $K\in \N$, let $\Phi_K$ be as in \eqref{E: definition of multiplicative Folner}, and for each $Q\in \Phi_K$, let 
$r_Q\in \{0,1, \ldots, Q^2-1\}$ be the numbers provided by Proposition 
\ref{P: binary correlations proposition}. These numbers exist because the 
coprimality conditions $(a,1)=(1,0)=1$ are satisfied.

For $\ell \in \B_2$, we may apply $(ii)$ or $(iii)$ of Proposition 
\ref{P: binary correlations proposition} (depending on whether 
$g^{\ell}$ is aperiodic or pretentious), to get that 
\begin{equation*}
  \limsup_{X\to \infty}\left|  \mathlarger{\cesE}_{Q\in \Phi_K}
     \mathlarger{\logE}_{n\leq X} 
     f^{\ell}\left(aQ^2n+ar_Q+1\right)\overline{g^{\ell}\left(Q^2n+r_Q\right)}
     \right|=o_{K\to \infty}(1).
\end{equation*}
Then, for $K$ sufficiently large, we have that 
\begin{equation*}\label{E: bound for E_2 f-g _2}
 \limsup_{k\to \infty}\left|\mathlarger{\cesE}_{Q\in \Phi_K} 
 \Re\left( \sum_{\ell\in \B_2}c_{\ell} \:
     \mathlarger{\logE}_{n\leq X_k} 
     f^{\ell}\left(aQ^2n+ar_Q+1\right)
     \overline{g^{\ell}\left(Q^2n+r_Q\right)} \right)\right|
\leq \frac{\e^2}{4},
\end{equation*}
from which we obtain that there is some $Q\in \Phi_K$ so that 
\begin{equation}\label{E: bound for E_2 _3 f-g}
\limsup_{k\to \infty} \Re\left( \sum_{\ell\in \B_2}c_{\ell} \:
     \mathlarger{\logE}_{n\leq X_k} 
    {f^{\ell}\left(aQ^2n+ar_Q+1\right)
     \overline{g^{\ell}\left(Q^2n+r_Q\right)}} \right) \geq -
     \frac{\e^2}{4}.
\end{equation}

Now, from Theorem \ref{T: Tao theorem} we know that there exists $B>0$ such 
that if the 1-bounded multiplicative functions $h_1,h_2$ satisfy 
\begin{equation*}
     \left|\mathlarger{\logE}_{n\leq X} 
     h_1\left(aQ^2n+ar_Q+1\right)h_2\left(Q^2n+r_Q\right)
     \right|\geq \frac{\e^2}{4\sum_{1\leq |\ell|<R}{|c_{\ell}|}},
\end{equation*}
and $X$ is sufficiently large, then \begin{equation*}
    \inf_{|t|\leq BX,q\leq B}\D(h_1(n),\chi(n)n^{it};1,X)\leq B \:\:\text{ and }\:\:
    \inf_{|t|\leq BX,q\leq B}\D(h_2(n),\chi(n)n^{it};1,X)\leq B,
\end{equation*}
and, similarly as in the proofs in the \cref{section_return-times}, we can pick $B$ to work for all choices of $Q\in \Phi_K$.

Our construction of the sequence $(X_k)_{k\in\N}$ implies that for all 
$\ell\in \B_1$, the function $f^{\ell}$ satisfies 
\eqref{E: large distance along subsequence f-g}. Therefore, we have 
\begin{equation*}
     \left|\mathlarger{\logE}_{n\leq X_k} 
     f^{\ell}\left(aQ^2n+ar_Q+1\right)\overline{g^{\ell}
     \left(Q^2n+r_Q\right)}\right|< 
     \frac{\e^2}{4\sum_{1\leq |\ell|<R}{|c_{\ell}|}}
\end{equation*}for all $\ell\in \B_1$ and all $m$ sufficiently large, 
and thus 
\begin{equation}\label{E: bound for E_1 _1 f-g}
    \limsup_{k\to \infty} \left| 
    \Re\left( \sum_{\ell\in \B_1}c_{\ell} \:
     \mathlarger{\logE}_{n\leq X_k} 
     f^{\ell}\left(aQ^2n+ar_Q +1\right)
     \overline{g^{\ell}\left(Q^2n+r_Q\right)} \right)
    \right|< \frac{\e^2}{4}.
\end{equation}

Combining \eqref{E: bound for E_2 _3 f-g} and 
\eqref{E: bound for E_1 _1 f-g} we 
obtain that 
\begin{equation*}
\limsup_{k\to \infty} \Re\left( \sum_{1\leq  |\ell| < R}  
     c_{\ell}\: \mathlarger{\logE}_{n\leq X_k} 
     f^{\ell}\left(aQ^2n+r_Q+1\right)
     \overline{g^{\ell}\left(Q^2n+r_Q\right)} \right) > - \e ^2.
\end{equation*}
This proves \eqref{suffices 1} with $L=Q^2$ and $r_L=r_Q$,
and finishes the proof in this case.

\underline{Case 2}: Assume that there is no $\ell \in \N$, $t\in \R$ and Dirichlet character $\chi_f$ so that $g^{\ell}(n)=n^{it}$ and $\D(f^{\ell}(n), \chi_f(n) n^{it})< +\infty$. Then we split $\{-R+1,\ldots,R-1\}\setminus\{0\}$ into
two sets according to whether $g^{\ell}$ is pretentious or 
aperiodic, and then choosing $r_Q '$ according to Remark \ref{R: symmetric}, we get (by the same proof as in Case 1) that 
\begin{equation*}
\limsup_{k\to \infty} \Re\left( \sum_{1\leq  |\ell| < R}  
     c_{\ell}\: \mathlarger{\logE}_{n\leq X_k} 
     f^{\ell}\left(aQ^2n+r_Q'+1\right)
     \overline{g^{\ell}(Q^2n+r_Q')} \right) > - \e ^2.
\end{equation*}
Again, this proves \eqref{suffices 1} with $L=Q^2$ and $r_L=r_Q'$ and the claim follows in this case. 

\underline{Case 3}: It remains to handle the case when there are $\ell_1, 
\ell_2 \in \N$, 
$t_1, t_2 \in \R$, and Dirichlet characters $\chi_1, \chi_2, \chi_3$ so that
$f^{\ell_1}(p)=\chi_1(p)p^{it_1}$ for all
$p\nmid a$, $\D(g^{\ell_1}(n),\chi_2(n) n^{it})<+ \infty$,
$\D(f^{\ell_2}(n),\chi_3(n) n^{it_2})<+\infty$, $g^{\ell_2}(n) = n^{it_2}$ 
and the conductor
$q_1$ of $\chi_1$ satisfies $q_1 \mid a^{\infty}$. 

Let us choose
$\ell_1, \ell_2$ to be the minimal numbers with the previous property. 
Let also $\ell_{f}, \ell_{g}$  be the minimal natural numbers so that 
$f^{\ell_{f}}, g^{\ell_{g}} $ are pretentious, and assume that we have
$f^{\ell_{f}}(n)\sim \chi_f(n) n^{it_f}, g^{\ell_{g}}(n)\sim \chi_g(n) n^{it_g}$.
Since $f^{\ell_1}$ is pretentious, Corollary \ref{C: minimal power that 
makes f pretentious} implies that $\ell_f\mid \ell_1$. Furthermore, since
$f^{\ell_1}(n)\sim\chi_1(n)n^{it_1}$, by the triangle 
inequality we conclude that $t_1=t_f \ell_1/\ell_f$. Analogously, 
we have that $\ell_g\mid \ell_1$ and $t_1=t_g \ell_1/\ell_g$.
Thus, $t_f/\ell_f= t_g/\ell_g$. Consider the 
functions $\wt{f}(n)=f(n) n ^{-i t_f/\ell_{f}}$ and 
$\wt{g}(n)=g(n) n ^{-i t_g/\ell_{g}}$. Since 
$t_f/\ell_{f}= t_g/\ell_{g}$, using the approximation 
$$(an+1)^{it_f/\ell_{f}}=(an)^{it_f/\ell_{f}}+\Oh_{a}\left(\frac{|t_f|}{\ell_{f}n}\right)$$ 
we see that it suffices to prove that
\begin{equation}
\A'(f,g,\e)=\left\{ n\in \N \colon | \wt{f}(an+1) - \wt{g}(an)| 
< \e \right\}.
\end{equation}

From the above, we have that 
$\ell_{f}, \ell_{g}$  are the minimal natural numbers so that 
$\wt{f}^{\ell_{f}}, \wt{g}^{\ell_{g}} $ are pretentious, 
$\wt{f}^{\ell_{f}}\sim \chi_f , 
\wt{g}^{\ell_{g}}\sim \chi_g $,
$\wt{f}^{\ell_1}(p)= \chi_1(p)$ for all $p\nmid a$, 
$\wt{g}^{\ell_1 }\sim \chi_2 $, 
$\wt{f}^{\ell_2}\sim \chi_3 $ and 
$\wt{g}^{\ell_2}\equiv 1$. In addition, $\ell_1, \ell_2$ are the minimal 
natural numbers with this property. Observe also that 
for all $m\in \Z$, all Dirichlet characters $\chi$ and all $t\neq 0$, we have $\wt{f}^m(n)\not \sim \chi(n) n^{it}$ and $\wt{g}^m(n)\not \sim  \chi(n) n^{it}$.

We say that a set $T\subset \P$ is \emph{thin} is $\sum_{p\in T} \frac{1}{p} <+\infty$. From the fact that $\wt{f}^{\ell_1}(p) = \chi_1(p)$
for all $p\nmid a$, we have that $\wt{f}$ is finitely generated. Since
$\wt{f}^{\ell_2}\sim \chi_3$, there is a thin set $T$ of primes
so that for all $p\notin T$, $\wt{f}^{\ell_2}(p) = \chi_3(p)$.\footnote{Since $\wt{f}$ and $\chi_3$ take values in finite sets, the distance can only be finite if the set $\{\wt{f}(p)\neq \chi_3(p)\} $ is thin.} Let 
$E=\big(\bigcup_{p\in T} p\N \big)^{c}$. Then, we easily check $\1_E$ is completely 
multiplicative (but not in $\mathcal{M}$), it is $1$-pretentious (since $\1_{E}(p)=1$ for all primes outside the thin set $T$), and $\wt{f}^{\ell_2}(n)=\chi_3(n)$ for all 
$n\in E$. 

If $\ell_2=1$, then $\wt{g}\equiv 1$, and we see that it suffices to prove that
\begin{equation}
    \A''=\left\{ n\in \N \colon | \wt{f}(an+1)\1_{E}(an+1) - 1| 
< \e \right\}
\end{equation}
has positive upper density. Since $\wt{f}\1_{E} \sim \chi_3$, then for $K$ 
sufficiently 
large (so that $q_3 \mid Q$ for $Q \in \Phi_K$), Lemma \ref{L: concentration estimate} implies that 
\begin{equation}\label{A''' has positive density}
    \mathlarger{\logE}_{n\leq X} |\wt{f}(aQn+1)\1_{E}(aQn+1) - \exp(F(K,X))|
    \ll \mathbb{D}(\wt{f} \1_E,\chi_3; p_K, X)+\frac{1}{\sqrt{p_K}}
    +\oh_{a,Q; X\to \infty}
\end{equation}
where, from the definition of $Q$, $$F(K,X)=\sum_{p_K < p \leq X} 
\frac{\wt{f}(p)\1_E(p)\overline{\chi_3(p)} -1}{p}$$ only depends on $K$. 
Since $\wt{f} \1_E \sim \chi_3$, the term involving the exponential in 
\eqref{A''' has positive density} is arbitrarily close to $0$ for 
$X$ and $K$ sufficiently large. Then, taking $\limsup_{X\to \infty}$ in 
\eqref{A''' has positive density} and $K$ sufficiently large, we have 
\begin{equation*}
    \limsup_{X\to \infty} 
    \mathlarger{\logE}_{n\leq X} |\wt{f}(aQn+1)\1_{E}(aQn+1) - 1| < \e
\end{equation*}for all $Q\in \Phi_K$.
For any such $Q$, we have that the set $\A''/Q$ has positive lower 
logarithmic density, i.e., 
$$\liminf_{X\to \infty} \mathlarger{\logE}_{n\leq X} \1_{\A''}(Qn) > 0,$$
and similarly as in the last step in the proof of Proposition 
\ref{P: Main proposition for f-f}, we infer that 
$$\liminf_{X\to \infty} \mathlarger{\logE}_{n\leq X}
{\1_{\A''} (n)} >0.$$ Therefore, $\A''$ has positive lower 
logarithmic density, so it also has positive upper logarithmic density.

Now, if $\ell_2 >1$, then since $|g(n)|=1$ for all $n$, 
it suffices to prove that $$\A'' :=\{n\in \N\colon |\wt{f}(an+1) \1_E (an+1)-
\wt{g}(an)| <\varepsilon\} $$ 
has positive upper logarithmic density, which
would follow if we prove that 
\begin{equation}\label{suffices 3}
    \A''' :=\{n\in \N\colon \wt{f}(an+1) \1_E (an+1)=\wt{g}(an) \}
\end{equation}
has positive upper logarithmic density. Assume that this is not the case, so 
$\A'''$ has zero logarithmic density. 
A calculation similar to the one in \eqref{E: relation between averages along progressions and normal averages} yields that for any $Q\in \N$ 
we have 
\begin{equation*}
    \mathlarger{\logE}_{n\leq X/Q} \1_{\A'''}(Qn) =
    o_{X\to\infty}(1).
\end{equation*}
We may shift the averaging from $X/Q$ to $X$ at the cost of an 
$\Oh(\log Q/\log X)$ error term. Hence, we have then that for all $Q\in \N$, 
$\A'''/Q$ also has zero logarithmic density.
Let $K$ be sufficiently large so that the modulus $q_3$ of $\chi_3$ divides
$Q$ for all $Q\in \Phi_K$. Then for $n\notin A'''/Q$,
\begin{itemize}
    \item if $aQn+1\in E$, then $\wt{g}^{\ell_2}(aQn)=1$, and 
    $\wt{f}^{\ell_2}(aQn+1)= 
    \chi_3(aQn+1)=1$, therefore $\wt{f}(aQn+1)\1_E(aQn+1) $, $\wt{g}(aQn)$ are
    $\ell_2$-roots of unity, and since $Qn\notin \A'''$, they are 
    distinct, so 
    $$\sum_{m=1} ^{\ell_2 } \wt{f}^m(aQn+1)  \1_E ^m(aQn+1)
    \overline{\wt{g}^m(aQn)} = 0$$ 
    \item if $aQn+1\notin E$, then $\1_E(aQn+1)=0$, so again 
    $$\sum_{m=1} ^{\ell_2 } \wt{f}^m(aQn+1)  \1_E ^m(aQn+1)
    \overline{\wt{g}^m(aQn)} = 0.$$
\end{itemize}

Since $\N \setminus \left(\A'''/Q\right)$ has logarithmic density $1$, we
conclude that for $K$ sufficiently large and $Q\in \Phi_K$,
\begin{equation*}
\lim_{X\to \infty} \sum_{m=1}^{\ell_2} \mathlarger{\logE}_{n\leq X} 
\wt{f}^m(aQn+1)\1_E(aQn+1)\overline{\wt{g}^m(aQn)}=0.
\end{equation*}
    Hence, for $K$ sufficiently large, 
\begin{equation}\label{E: f-g l eq}
\lim_{X\to \infty} \mathlarger{\cesE}_{Q\in \Phi_K}\sum_{m=1}^{\ell_2} 
\mathlarger{\logE}_{n\leq X}
\wt{f}^m(aQn+1)\1_E(aQn+1)\overline{\wt{g}^m(aQn)}=0.
\end{equation}

Fix a $K$ such that \eqref{E: f-g l eq} holds and such that 
for $1\leq \ell \leq \ell_2 -1$, 
\begin{equation}\label{E: multiplicative averages of g^l tilde are small}
    \left|\mathlarger{\cesE}_{Q\in \Phi_K} \wt{g}^{\ell}(Q)\right|\leq \e.
\end{equation}
We can do this because $g^{\ell}\not \equiv 1$. Assume that $K$ is so large
that for all $\ell \in \{1, \dots, \ell_2 -1\}$, 
if $\wt{f}^{\ell} \sim \wt{\chi}_{\ell}$, 
(in which case we 
also have that $\wt{f}^{\ell} \1_E ^{\ell} \sim \wt{\chi}_{\ell}$), then the conductor of $\wt{\chi}_{\ell}$ divides 
$aQ$ for all $Q\in \Phi_K$ and also 
$$\sum_{p>p_K} 
\frac{\wt{f}^{\ell} \1_E^{\ell}(p)\overline{\wt{\chi}_{\ell}
(p)} -1}{p} \ll \e \:\:\text{ and }\:\: \D(\wt{f}^{\ell} \1_E ^{\ell}, \wt{\chi}_{\ell} ; p_K, +\infty) + \frac{1}{\sqrt{p_K}} \ll \e.$$
Also, we assume that 
$$\sum_{p>p_K} 
\frac{\wt{f}^{\ell_2} \1_E^{\ell_2} (p) \overline{\chi_3 (p)} -1}{p}\ll\e \:\:\text{ and }\:\: 
\D(\wt{f}^{\ell_2} \1_E ^{\ell_2}, \chi_3 ; p_K, +\infty) + \frac{1}{\sqrt{p_K}} \ll \e.$$

For the finite collection of $Q\in \Phi_K$, we use Theorem 
\ref{T: Tao theorem} to choose a $B>0$ so that if 
$h_1,h_2\colon\N\to\C$ are $1$-bounded multiplicative functions, $X\geq B$ and
$$\inf_{|t|\leq BX, q\leq B}\mathbb{D}(h_1(n),\chi(n)n^{it}; 1,X)\geq B \:\:\text{ or }\:\: \inf_{|t|\leq BX, q\leq B}\mathbb{D}(h_2(n),\chi(n)n^{it}; 1,X)\geq B,$$
then
$$\left|\mathlarger{\logE}_{n\leq X}
h_1(a_1n+b_1)h_2(a_2n+b_2)\right|\leq\varepsilon.$$

Similarly as in \ref{subsec X_k}, we split $\{1,\ldots,\ell_2 \}$ into two 
sets $\B_1$ and $\B_2$, where $\B_2$ consists of those integers 
$\ell\in \{1,\ldots,\ell_2 \} $ such that $\wt{g}^{\ell}$ is pretentious,
and as in subsection \ref{subsec X_k}, we fix a $B>0$ to be chosen later
and a sequence $X_k\to \infty$ such that 
\begin{equation}\label{E: large distance along subsequence f-g _2}
    \lim\limits_{k\to \infty} \inf_{|t|\leq BX_k,q\leq B} \D(\wt{g}^{\ell}(n),\chi(n)n^{it}; 1,X_k)=+\infty
\end{equation}
for all $\ell \in \B_{1}$.

Now for $1\leq \ell \leq \ell_2-1$: 
\begin{itemize}
    \item If $\wt{g}^{\ell}$ is aperiodic, then $\wt{g}^{{\ell}}$ satisfies 
    \eqref{E: large distance along subsequence f-g _2}, so
    \begin{equation}\label{E: f-g l eq _1}
    \limsup_{k\to \infty}
    \left| \mathlarger{\cesE}_{Q\in \Phi_K} \mathlarger{\logE}_{n\leq X_k}
    \wt{f}^{\ell} (aQn+1) \1_E ^{\ell}(aQn+1)  
    \overline{\wt{g}^{\ell}(aQn)} \right| \leq \e.
    \end{equation}

    \item If $\wt{g}^{\ell} $ is pretentious and $\wt{f}^{\ell}$
    is aperiodic (in which case also $\wt{f}^{\ell} \1_E ^{\ell} $ is 
    aperiodic), then from (ii) of 
    Proposition \ref{P: binary correlations proposition} we have
    \begin{equation*}
    \sup_{Q\in \Phi_K} \limsup_{X\to \infty}
    \left| \mathlarger{\logE}_{n\leq X}
    \wt{f}^{\ell} (aQn+1) \1_E ^{\ell}(aQn+1)  \overline{\wt{g}^{\ell}
    (aQn) } \right|= \oh_{K\to \infty}(1),
    \end{equation*}
    so if $K$ is sufficiently large, then 
    \begin{equation}\label{E: f-g l eq _2}
    \limsup_{X\to \infty}
    \left| \mathlarger{\cesE}_{Q\in \Phi_K} \mathlarger{\logE}_{n\leq X}
    \wt{f}^{\ell} (aQn+1) \1_E ^{\ell}(aQn+1)  \overline{\wt{g}^{\ell}
    (aQn) } \right| \ll \e.
    \end{equation}
    
    \item Finally, if both $\wt{f}^{\ell}$ and $\wt{g}^{\ell}$ are pretentious, 
    $\wt{f}^{\ell} \sim \wt{\chi}_{\ell}\:(\:= \chi_f ^{\ell/\ell_{f}})$, 
    then we 
    also have that $\wt{f}^{\ell} \1_E ^{\ell} \sim \wt{\chi}_{\ell}$, 
    and since we assumed that the conductor of $\wt{\chi}_\ell$ divides $aQ$, we 
    apply Lemma \ref{L: concentration estimate} to get that 
    \begin{align*}
    \mathlarger{\logE}_{n\leq X} \wt{f}^{\ell} (aQn+1) \1_E ^{\ell}(aQn+1)  \overline{\wt{g}^{\ell}(aQn) }
    = \overline{\wt{g}^{\ell} (Q)} \exp\left(\sum_{p_K<p\leq X} 
    \frac{\wt{f}^{\ell} \1_E^{\ell} (p) \overline{\wt{\chi}_{\ell}(p)} - 1}{p}\right) 
    \mathlarger{\logE}_{n\leq X}
    \overline{\wt{g}^{\ell}(an)} & \\
    + \Oh\left( \D(\wt{f}^{\ell} \1_E ^{\ell}, \wt{\chi}_{\ell} ; p_K, X) + \frac{1}{\sqrt{p_K}}\right) + \oh_{a,Q; X\to \infty}(1). &
    \end{align*}
    Thus, taking $\limsup_{X\to \infty}$, averaging over $Q\in \Phi_K$ and 
    using our assumptions on $K$,\footnote{We use \eqref{E: multiplicative averages of g^l tilde are small} and the fact that all error terms are smaller than $\e$.} we obtain that 
    \begin{equation}\label{E: f-g l eq _3}
    \limsup_{X\to \infty}
    \left| \mathlarger{\cesE}_{Q\in \Phi_K} \mathlarger{\logE}_{n\leq X}
    \wt{f}^{\ell} (aQn+1) \1_E ^{\ell}(aQn+1)  
    \overline{\wt{g}^{\ell}(aQn)}
    \right| \ll \e.
    \end{equation}
\end{itemize}
Combining \eqref{E: f-g l eq}, \eqref{E: f-g l eq _1}, 
\eqref{E: f-g l eq _2} and \eqref{E: f-g l eq _3} we get that 
\begin{equation}\label{E: f-g l eq _4}
\limsup_{k\to \infty}
\left| \mathlarger{\cesE}_{Q\in \Phi_K} \mathlarger{\logE}_{n\leq X_k}  
\wt{f}^{\ell_2} (aQn+1) \1_E ^{\ell_2}(aQn+1)  
\overline{\wt{g}^{\ell_2}(aQn) }\right|
\ll \e.
\end{equation}

On the other hand, $\wt{g}^{\ell_2}\equiv 1$, so 
applying Lemma \ref{L: concentration estimate} again we get that 
\begin{multline*}
\mathlarger{\logE}_{n\leq X_k}   
\wt{f}^{\ell_2} (aQn+1) \1_E ^{\ell_2}(aQn+1)  
\overline{\wt{g}^{\ell_2}(aQn)}
=
\mathlarger{\logE}_{n\leq X_k}   
\wt{f}^{\ell_2} (aQn+1) \1_E ^{\ell_2}(aQn+1) \\
= \exp\left(\sum_{p_K<p\leq X_k} 
\frac{\wt{f}^{\ell_2} \1_E^{\ell_2} (p) \overline{\chi_3 (p)} -1}
{p}\right)
+ \Oh\left( \D(\wt{f}^{\ell_2} \1_E ^{\ell_2}, \chi_3 ; p_K, X_k) + \frac{1}
{\sqrt{p_K}}\right)+ \oh_{a,Q; X\to \infty}(1).
\end{multline*}
Taking $\limsup_{k\to \infty}$ and using again the fact that $K$ is large, we deduce that
\begin{equation*}
\limsup_{k\to \infty} \left|
\mathlarger{\cesE}_{Q\in \Phi_K} \mathlarger{\logE}_{n\leq X_k} 
\wt{f}^{\ell_2} (aQn+1) \1_E ^{\ell_2}(aQn+1)  
\overline{\wt{g}^{\ell_2}(aQn) }\right|=1-\Oh(\e).
\end{equation*}
This contradicts \eqref{E: f-g l eq _4} for $\e$ sufficiently small, 
and concludes the proof of the theorem.
\end{proof}

\begin{remark}
We remark here that if $f,g$ are pretentious or finitely generated, then in 
a similar way as we do in \ref{subsec lower density}, we could prove 
that the set $\A(f,g,\e)$ in Theorem 
\ref{a result on pairs} has positive lower logarithmic density. 
\end{remark}

From Theorem \ref{a result on pairs} one gets that for any $f,g \in \M$, 
$$ \liminf_{n\to \infty} |f(n+1)-g(n)|=0.$$
It is natural to think that one can generalize this to get that for all 
$f,g \in \M$ and $k\in \Z$ 
$$ \liminf_{n\to \infty} |f(n+k)-g(n)|=0.$$
However, this is not true. We provide a counterexample here for the 
case $k=2$.

\begin{counterexample*}
Let $\chi$ be the Dirichlet character modulo $4$ defined by $\chi(1)=1, 
\chi(3)=-1$. Take $\theta_1, \theta_2$ rationals in $(0,1)$ so that 
for all $\ell \in \N_0$, $\ell \theta_1 \pmod{1} \neq \theta_2$, 
$\ell \theta_1 + \frac{1}{2} \pmod{1} \neq \theta_2$, 
$\ell \theta_2 \pmod{1} \neq \theta_1$ and 
$\ell \theta_2 + \frac{1}{2} \pmod{1}  \neq \theta_1$ (it is not too difficult to 
see that, for example, if $\theta_1={1}/{p_1}$, $\theta_2={1}/{p_2}$ 
for different primes $p_1, p_2$, then the previous conditions are satisfied).
Let 
$$\eta:=\min \{ |e(\ell \theta_1)-e(\theta_2)|, 
|e(\ell \theta_1 + 1/2)-e(\theta_2)|, 
|e(\ell \theta_2)-e(\theta_1)|, 
|e(\ell \theta_2 + 1/2)-e(\theta_1)| \colon 
\ell \in \N_0\}.$$

From the conditions on $\theta_1, \theta_2$ we get that $\eta>0$. 
Now, consider the modified Dirichlet characters $f,g$ modulo $4$ defined for 
$n=2^{\ell} k$, $2\nmid k, \ell \geq 0$, by $f(n)=e(\ell \theta_1) \chi(k)$,
$g(n)=e(\ell \theta_2) \chi(k)$. Let $n\in \N$. If $n\equiv 1 \pmod{4}$, 
then $n+2 \equiv 3 \pmod{4}$, so $f(n+2)=-1, g(n)=1$ and $|f(n+2)-g(n)|=2$. 
If $n\equiv 3 \pmod{4}$, then $n+2 \equiv 1 \pmod{4}$, so $f(n+2)=1, 
g(n)=-1$ and $|f(n+2)-g(n)|=2$.

Now, if $n\equiv 0 \pmod{4}$, $n=4j$, then $n+2 = 4j+2=2(2j+1)$, 
so $f(n+2)=e(\theta_1)$ or $f(n+2)=e(\theta_1+ \frac{1}{2})$, and 
$g(n)\in \{e(\ell \theta_2), e(\ell \theta_2 + \frac{1}{2})\colon \ell \in \N\}$,
so by assumption $|f(n+2)-g(n)|\geq \eta>0 $. 

Finally, if $n\equiv 2 \pmod{4}$, $n=4j+2$, then $n+2=4(j+1)$, 
so $f(n+2)\in \{e(\ell \theta_1), e(\ell \theta_1 + \frac{1}{2})\colon \ell \in 
\N\}$ and $g(n)=e(\theta_2)$ or $g(n)=e(\theta_2+ \frac{1}{2})$,
again $|f(n+2)-g(n)|\geq \eta$. Combining all the previous we get that 
$\liminf_{n\to \infty} | f(n+2)-g(n)| \geq \eta >0. $
\end{counterexample*}

\section{Recurrence for finitely generated multiplicative systems}\label{section_mult-rec-for-fg-systems}

In this section, we deal with the multiplicative recurrence properties of the set $R=\{(an+b)/(an+d)\colon n\in \N\}$. In 
particular, we prove Theorem \ref{T: multiplicative recurrence for finitely generated systems}, which asserts that if $b=d$ or $a\mid bd$, then $R$ is a set
of recurrence for finitely generated systems. 
Let $\Mfg$ denote the subset of $\M$ consisting of all finitely generated 
functions in $\M$.

\begin{lemma}\label{mfg is Borel}
    The set $\Mfg$ is Borel measurable.
\end{lemma}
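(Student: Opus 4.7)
The plan is to exploit the natural identification of $\M$ with the compact Polish space $(\mathbb{S}^1)^{\P}$ via $f\leftrightarrow (f(p))_{p\in\P}$ (this is a bijection because a completely multiplicative function into $\mathbb{S}^1$ satisfies $f(1)=1$ and is determined on all of $\N$ by its values at primes). I will endow $\M$ with the corresponding product topology. Then I will decompose
\[
\Mfg=\bigcup_{N\in\N} F_N,\qquad F_N:=\{f\in\M\colon |\{f(p)\colon p\in\P\}|\leq N\},
\]
and show that each $F_N$ is closed in $\M$, so that $\Mfg$ is $F_\sigma$ and in particular Borel.

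To show $F_N$ is closed, I will realize it as the continuous projection of a compact set. Consider the subset of $\M\times(\mathbb{S}^1)^N$ defined by
\[
C_N:=\bigl\{(f,w_1,\dots,w_N)\colon f(p)\in\{w_1,\dots,w_N\}\text{ for every }p\in\P\bigr\}.
\]
For each fixed prime $p$, the set $\{(f,\bar w)\colon f(p)\in\{w_1,\dots,w_N\}\}=\bigcup_{j=1}^N\{(f,\bar w)\colon f(p)=w_j\}$ is a finite union of closed sets (each $\{f(p)=w_j\}$ is the zero set of the continuous map $(f,\bar w)\mapsto f(p)-w_j$), hence closed. Intersecting over the countably many primes gives that $C_N$ is closed in the compact space $\M\times(\mathbb{S}^1)^N$, therefore compact. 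The projection $\pi\colon\M\times(\mathbb{S}^1)^N\to\M$ is continuous, so $F_N=\pi(C_N)$ is a compact, hence closed, subset of $\M$.

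Taking the countable union over $N\in\N$ yields that $\Mfg$ is $F_\sigma$, and in particular Borel, as required. The only step that requires any care is the verification that $C_N$ is closed; everything else is soft. There is no real obstacle here because we are projecting from a compact space, which avoids the usual descriptive-set-theoretic subtlety that projections of Borel sets need only be analytic. If the paper prefers the weaker statement, one could equivalently argue that $\Mfg$ is analytic as a projection of a closed (hence Borel) set, but the compactness of the fibre $(\mathbb{S}^1)^N$ lets us conclude the stronger Borel (indeed $F_\sigma$) conclusion directly.
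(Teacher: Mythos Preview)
Your proof is correct and takes a somewhat different route than the paper. Both arguments begin with the same decomposition $\Mfg=\bigcup_N F_N$ (in the paper's notation, $F_N=\mathcal{C}_N$). The paper then defines, for each $n$, the function
\[
H_n(f)=\inf_{\xi_1,\ldots,\xi_n\in[0,1)\cap\Q}\ \sum_{p\in\P}\frac{|(f(p)-e(\xi_1))\cdots(f(p)-e(\xi_n))|}{p^2},
\]
observes that $H_n$ is Borel as a countable infimum of pointwise limits of continuous functions, and checks that $\mathcal{C}_n=H_n^{-1}(\{0\})$ (the nontrivial direction of this equality implicitly uses a compactness/limit argument). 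You instead realize $F_N$ directly as the projection to $\M$ of the closed set $C_N\subset\M\times(\mathbb{S}^1)^N$; since the ambient space is compact, $F_N$ is compact and hence closed. Your argument is cleaner in that it avoids the auxiliary function $H_n$ and immediately gives the stronger conclusion that each $F_N$ is closed, so $\Mfg$ is $F_\sigma$. (The paper's $H_n$ is in fact upper semicontinuous, so its approach also yields closedness of $\mathcal{C}_n$, but this is not made explicit there.) The two proofs are morally the same compactness argument packaged differently; yours is the more transparent formulation.
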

\begin{proof}
    For each $n\in \N$, let $\mathcal{C}_n$ denote the set of functions 
    $f\in \M$ that take at most $n$ different values on $\P$. Then 
    $\Mfg=\bigcup_{n\in \N} \mathcal{C}_n$, so to prove that $\Mfg$ is 
    Borel, it suffices to prove that $\mathcal{C}_n$ is Borel for all 
    $n\in \N$. 

    Given $n\in \N$, consider the function $H_n\colon \M \to [0,+\infty)$ 
    defined by
    $$H_n (f)=\inf_{\xi_1, \ldots, \xi_n \in [0,1)\cap\Q} \sum_{p\in \P} 
    \frac{|(f(p)-e(\xi_1))\cdots(f(p)-e(\xi_n))|}{p^{2}}.$$

    Then, $H_n$ is Borel measurable as a countable infimum of measurable functions.\footnote{Each series in this expression is a Borel measurable function, as it is a pointwise limit of continuous functions.} For 
    $f\in \M$, we have that 
    $f\in \mathcal{C}_n$ if and only if $H_n(f)=0$. Therefore, 
    $\mathcal{C}_n=\{f\in \M\colon H_n(f)=0\}$, so it is Borel 
    measurable. This concludes the proof.     
\end{proof}

Let $\Mfgp$, $\Mfgs$ denote the subsets of $\Mfg$ that consist respectively
of the pretentious and aperiodic functions in $\Mfg$.
From Lemma \ref{mfg is Borel} and {\cite[Lemma 3.6]{Fra-Klu-Mor}} we get that
the sets $\Mfgp, \Mfgs \subset \M$ are Borel measurable.
Recall that 
by Proposition \ref{P: finitely generated non-pretentious is strongly aperiodic},
every $f\in \Mfgs$ is strongly aperiodic.

Let $\left(X,\X,\m,(T_n)_{n\in \N}\right)$ be a finitely 
generated multiplicative system and let 
$A\in \X$ with $\delta:=\mu(A)>0$.
Using the Bochner-Herglotz theorem, we get that there is a finite Borel 
measure $\sigma$ on $\M$ with $\sigma(\{1\})\geq \delta^2$
such that for any $r,s\in\N$,
\begin{equation*}
    \mu(T_r^{-1}A\cap T_s^{-1}A) = \int_{\M} f(r)\overline{f(s)} ~d\sigma(f).
\end{equation*}
The measure $\sigma$ is assigned to the set $A$ and it is the {\em spectral measure} of the function $\1_A\in L^2(X,\mu)$. 
By \cite[Lemma 2.6]{Charamaras-multiplicative}, since $(X,\X,\mu,(T_n)_{n\in \N})$ is
finitely generated, the measure $\sigma$ is supported on $\Mfg$, thus the 
above equation becomes
\begin{equation}\label{eqn_spectral_meas}
    \mu(T_r^{-1}A\cap T_s^{-1}A) = \int_{\Mfg} f(r)\overline{f(s)} ~d\sigma(f).
\end{equation}
Then Theorem 
\ref{T: multiplicative recurrence for finitely generated systems} 
follows from the next theorem.
\begin{theorem}\label{thm_rec_fg_reformulation}
    Let $a\in\N$, $b,c \in \Z$ with $(a,b,d)=1$ satisfy $b=d$ or 
    $a\mid bd$. Let $\delta>0$ and $\sigma$ be a finite Borel measure on $\Mfg$ such that $\sigma(\{1\})\geq \delta^2$ and
    \begin{equation}\label{eqn_rec_fg_assumption}
        \int_{\Mfg} f(r)\overline{f(s)}~d\sigma(f) \geq 0
        \qquad \text{for every}~ r,s\in\N.
    \end{equation}
    Then there is $n\in\N$ such that 
    \begin{equation}\label{eqn_rec_fg_conclusion}
        \int_{\Mfg} f(an+b)\overline{f(an+d)}~d\sigma(f) \geq 
        \frac{\delta^2}{2}.
    \end{equation}
    Moreover, the set of $n\in\N$ satisfying 
    \eqref{eqn_rec_fg_conclusion} has positive lower density.
\end{theorem}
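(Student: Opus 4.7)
The plan is to reduce the theorem to a logarithmic-average lower bound on
\[
I(n):=\int_{\Mfg}f(an+b)\overline{f(an+d)}\,d\sigma(f)
\]
along a suitable arithmetic progression, and then invoke a Markov-type argument together with the density-comparison step at the end of the proof of Proposition~\ref{P: Main proposition for f-f}. The positivity assumption gives $I(n)\geq 0$ and $I(n)\leq M:=\sigma(\Mfg)$; once we produce $Q\in\N$ and $r\in\{0,\ldots,Q^2-1\}$ with $\liminf_{X\to\infty}\logE_{n\leq X}I(Q^2 n+r)\geq \delta^2-\epsilon$ for some $\epsilon<\delta^2/4$, the bound $\logE_{n\leq X}I(Q^2n+r)\leq (M-\delta^2/2)\logE_{n\leq X}\1_{E_0}(n)+\delta^2/2$ with $E_0:=\{n:I(Q^2n+r)\geq \delta^2/2\}$ forces $\underline{d}_{\log}(E_0)>0$, and the density-transfer argument just cited yields $\underline{d}_{\log}(\{m:I(m)\geq \delta^2/2\})>0$.

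For the lower bound decompose $\sigma=\sigma(\{1\})\delta_1+\sigma_s+\sigma_p$, where $\sigma_s:=\sigma|_{\Mfgs}$ and $\sigma_p:=\sigma|_{\Mfgp\setminus\{1\}}$ (every $f\in \Mfg$ is pretentious or aperiodic, the latter being strongly aperiodic by Proposition~\ref{P: finitely generated non-pretentious is strongly aperiodic}). The $\delta_1$-piece contributes exactly $\sigma(\{1\})\geq \delta^2$ to every $\logE_{n\leq X}I(Q^2 n+r)$; for the $\sigma_s$-piece Theorem~\ref{T: Tao theorem} applied to the strongly aperiodic pair $(f,\overline f)$ yields $\logE_{n\leq X}f(aQ^2n+ar+b)\overline{f(aQ^2n+ar+d)}\to 0$ as $X\to\infty$ pointwise in $f\in\Mfgs$, and dominated convergence ($1$-bounded integrands) kills this contribution. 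The heart of the argument is the $\sigma_p$-piece, where we must invoke the $Q$-trick of Proposition~\ref{P: binary correlations proposition}. The obstacle is that the CRT choice of $r_Q$ there depends on the conductor of the character $\chi_f$ that $f$ pretends to, and this conductor is not uniformly bounded on $\mathrm{supp}(\sigma_p)$. We resolve this by truncation: since a finitely generated pretentious $f$ must pretend to an honest Dirichlet character---the $n^{it}$ factor is forced to vanish, for equidistribution of $\{t\log p\bmod 2\pi\}$ with $t\neq 0$, combined with the finiteness of $\{f(p)\}$, would make $\D(f,\chi n^{it})$ diverge---the Borel sets
\[
\Mfgp^{(N)}:=\{f\in\Mfgp:\ f\sim\chi\text{ for some Dirichlet character }\chi\text{ of conductor}\leq N\}
\]
exhaust $\Mfgp$. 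Given $\epsilon>0$ we fix $N$ with $\sigma_p(\Mfgp\setminus\Mfgp^{(N)})<\epsilon$ and take $r_Q=r_Q^{(N)}$ as in Proposition~\ref{P: binary correlations proposition} but with the parameters $\mu,\nu$ inflated to $\lceil\log_2 N\rceil$; the CRT congruences then hold uniformly for every $f\in\Mfgp^{(N)}$. For such an $f\neq 1$ part~(iii) of that proposition applies with $g=\overline f$, and its exceptional alternative $f(n)=n^{it}$ is ruled out by finite generation, so $\lim_{K\to\infty}\limsup_{X\to\infty}\bigl|\cesE_{Q\in\Phi_K}\logE_{n\leq X}f(aQ^2n+ar_Q^{(N)}+b)\overline{f(aQ^2n+ar_Q^{(N)}+d)}\bigr|=0$ for each such $f$. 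Reverse Fatou together with dominated convergence then handles the $\Mfgp^{(N)}\setminus\{1\}$-part, while the tail is $\leq\epsilon$ by the choice of $N$.

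Assembling the three contributions, $\cesE_{Q\in\Phi_K}\logE_{n\leq X}I(Q^2n+r_Q^{(N)})\geq \delta^2-O(\epsilon)$ for $N,K,X$ large in the right order. For each fixed $Q\in\Phi_K$ the limit $L_Q:=\lim_{X\to\infty}\logE_{n\leq X}I(Q^2n+r_Q^{(N)})$ exists: pointwise in $f$ this is \cite[Theorem~1.5]{correl_klu}, and dominated convergence pushes the limit through the $\sigma$-integral. Since $\Phi_K$ is finite and $\cesE_{Q\in\Phi_K}L_Q\geq \delta^2-O(\epsilon)$, some $Q^\ast\in\Phi_K$ realizes $L_{Q^\ast}\geq \delta^2-O(\epsilon)$; plugging this into the Markov estimate and density transfer of the first paragraph completes the proof. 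The main obstacle is exactly the non-uniformity of $r_Q$ across $\mathrm{supp}(\sigma)$, handled by the tightness/truncation described above.
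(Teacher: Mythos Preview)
Your proposal is correct and follows essentially the paper's route: split $\sigma$ over $\{1\}$, $\Mfgs$, and $\Mfgp\setminus\{1\}$; dispatch $\Mfgs$ via strong aperiodicity and Theorem~\ref{T: Tao theorem}, dispatch $\Mfgp\setminus\{1\}$ via Proposition~\ref{P: binary correlations proposition}(iii); average over $\Phi_K$, extract a good $Q$, and finish with the Markov estimate and the density transfer from the end of Section~\ref{section_return-times}.

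The one genuine addition is your truncation to $\Mfgp^{(N)}$ in order to choose $r_Q$ independently of $f$. You are right that, read literally, Proposition~\ref{P: binary correlations proposition} builds $r_Q$ from parameters $\mu,\nu$ that depend on the conductor $q_f$. The paper does not truncate; it simply works with a single $r_Q$ throughout. This is justified (though not spelled out) by the observation that the right-hand sides of the congruences \eqref{E: congruence 1}--\eqref{E: congruence 2} are $f$-free, so one may take $\mu=\nu$ to be any quantity growing with $K$ (anything below $2^K$ keeps the CRT modulus $<Q^2$), producing an $f$-independent $r_Q$ for which Claims~\ref{Claim: congruence relations for l_Q}--\ref{Claim: congruence relations for m_Q} and hence the proof of part~(iii) go through for every fixed $f\in\Mfgp\setminus\{1\}$ once $K$ is large relative to $q_f$; dominated convergence over $f$ then absorbs that $f$-dependent threshold in $K$. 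Your truncation is an equally valid and arguably more transparent way to close this point, trading an extra $\epsilon$-layer for a uniform conductor bound. Your side claim that a finitely generated pretentious $f$ must have $t_f=0$ is correct and is essentially Proposition~\ref{P: strong aperiodicity for almost non-dense functions}; note, however, that the argument only needs the weaker fact that such an $f\neq 1$ is never identically $n^{it}$, which is immediate since $\{p^{it}:p\in\P\}$ is infinite for $t\neq 0$.
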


\begin{proof}[Proof of Theorem \ref{thm_rec_fg_reformulation}]
If $b=d$, then for all $f\in \Mfg$ and $n\in \N$,
$f(an+b)\overline{f(an+d)}=1$, so
$$\int_{\Mfg} f(an+b)\overline{f(an+d)}~d\sigma(f) \geq \delta ^2,$$
and the result follows trivially. Hence, from now on, we assume that 
$b\neq d$ and $a\mid bd$.

Similarly to the proof of Proposition \ref{P: Main proposition for f-f}, we
take $v=(a,b)$ and write $an+b=v(a_0n +b_0)$, $an+d= a_0(vn+d_0)$, where 
$(a_0,b_0)=(v,d_0)=(v,a_0)=1$, so the integral in 
\eqref{eqn_rec_fg_conclusion} becomes 
\begin{equation}\label{eqn_rec_fg_conclusion _1}
\int_{\Mfg} f(v) \overline{f(a_0)}f(a_0n+b_0)\overline{f(vn+d_0)}~d\sigma(f).
\end{equation}

Taking real parts, and using that $\sigma(\{1\})\geq \delta^2$, we 
see that for each $n$, the expression in 
\eqref{eqn_rec_fg_conclusion _1} is greater or equal than  
\begin{align}\label{eqn_rec_fg_conclusion _2}
\delta ^2 & + \int_{\Mfgp\setminus \{1\}} \Re\left( f(v) 
\overline{f(a_0)}f(a_0n+b_0)\overline{f(vn+d_0)} \right)~d\sigma(f) \\
&+
 \int_{\Mfgs} \Re\left( f(v) 
\overline{f(a_0)}f(a_0n+b_0)\overline{f(vn+d_0)}\right) ~d\sigma(f).
\nonumber
\end{align}

Observe that for each $f\in \Mfgp\setminus \{1\}$, we have there is no $t\in\R$ such that $f$ is identically equal to $n^{it}$,
so from Proposition \ref{P: binary correlations proposition} we know that 
we can find $r_Q$'s so that 
\begin{equation}\label{eqn_rec_fg_conclusion _3}
    \lim\limits_{K\to \infty}\  \limsup\limits_{X\to \infty}\     \left|\mathlarger{\cesE}_{Q\in \Phi_K} \mathlarger{\logE}_{n\leq X} f(a_0Q^2n+a_0r_Q+b_0)\overline{f(vQ^2n+vr_Q+d_0)}   \right|=0.
\end{equation}
Again from {\cite[Theorem 2.14]{fra-lem-deL}} and {\cite[Theorem 1.5]{correl_klu}} we know that for each $f\in \Mfgp$
and $Q\in \N$, the limit 
$$\lim_{X\to \infty} \mathlarger{\logE}_{n\leq X}
f\left(a_0Q^2n+a_0r_Q+b_0\right) \overline{f\left(vQ^2n+vr_Q+d_0\right)}$$
exists, which implies that in \eqref{eqn_rec_fg_conclusion _3} we can
replace the $\limsup_{X \to \infty}$ by $\lim_{X\to \infty}$. Using that and the 
Dominated Convergence Theorem we get 
\begin{equation}\label{eq dom conv 1}
    \lim_{K\to \infty} \lim_{X\to \infty} 
    \int_{\Mfgp\setminus \{1\}} \left| \mathlarger{\cesE}_{Q\in \Phi_K} 
     \mathlarger{\logE}_{n\leq X}
    f(a_0Q^2n+a_0r_Q+b_0)\overline{f(vQ^2n+vr_Q+d_0)} \right| d\sigma(f)=0.
\end{equation}
Now for each $K$ and $X$, we have  
\begin{multline*}
    \left| \mathlarger{\cesE}_{Q\in \Phi_K} \int_{\Mfgp\setminus \{1\}}
    \Re \left( f(v) \overline{f(a_0)}\: \mathlarger{\logE}_{n\leq X}
    f(a_0Q^2n+a_0r_Q+b_0)\overline{f(vQ^2n+vr_Q+d_0)} \right)
    d\sigma(f) 
    \right| \\
    \leq \int_{\Mfgp\setminus \{1\}} \left| \mathlarger{\cesE}_{Q\in \Phi_K} 
     \mathlarger{\logE}_{n\leq X}
    f(a_0Q^2n+a_0r_Q+b_0)\overline{f(vQ^2n+vr_Q+d_0)} \right| d\sigma(f), 
\end{multline*}
so taking $X\to \infty$ first, and then $K\to \infty$, we obtain that 
\begin{multline}\label{eq dom conv 2}
    \lim_{K \to \infty} \limsup_{X\to \infty} 
    \Bigg| \mathlarger{\cesE}_{Q\in \Phi_K} \int_{\Mfgp\setminus \{1\}}
    \Re \Big( f(v) \overline{f(a_0)}\\
    \mathlarger{\logE}_{n\leq X}
    f(a_0Q^2n+a_0r_Q+b_0)\overline{f(vQ^2n+vr_Q+d_0)} \Big)
    d\sigma(f) 
    \Bigg|=0.
\end{multline}
Hence there is $K$ so that 
\begin{multline*}
    \limsup_{X\to\infty} 
    \Bigg| \mathlarger{\cesE}_{Q\in \Phi_K} \int_{\Mfgp \setminus \{1\}}
    \Re \Big( f(v) \overline{f(a_0)} \\
    \mathlarger{\logE}_{n\leq X}
    f(a_0Q^2n+a_0r_Q+b_0)\overline{f(vQ^2n+vr_Q+d_0)} \Big)
    d\sigma(f) \Bigg| 
    \leq \frac{\delta^2}{4},
\end{multline*}
from which we get that there is $Q\in \Phi_K$ so that 
\begin{equation}\label{eq ld}
\limsup_{X\to \infty}  \int_{\Mfgp \setminus \{1\}} 
\Re\left(f(v) \overline{f(a_0)} \: \mathlarger{\logE}_{n\leq X} 
f(a_0Q^2n+a_0r_Q+b_0)\overline{f(vQ^2n+vr_Q+d_0)} \right) d\sigma(f)\geq 
-\frac{\delta^2}{4}.
\end{equation}
Since for each $f\in \Mfgp$ the limit of the correlations appearing in 
\eqref{eq ld} exists, using the Dominated Convergence Theorem 
in \eqref{eq ld}, we can take $\lim_{X\to \infty}$.

Since $b\neq d$, we have that $a_0 Q^2 (vr_Q + d_0) \neq vQ^2(a_0 r_Q +b_0),$
so we can apply Theorem \ref{T: Tao theorem} to find $B>0$ depending on 
$a_0, b_0, v, d_0, Q, r_Q$ so   
that if the $1$-bounded multiplicative function $h$ satisfies 
\begin{equation*}
     \left|\mathlarger{\logE}_{n\leq X} 
     h\left(a_0Q^2n+a_0r_Q+b_0\right)\overline{h\left(vQ^2n+vr_Q+d_0\right)}
     \right|\geq \frac{\delta^2}{4\sigma(\M)},
\end{equation*}
and $X$ is sufficiently large, then 
$   \inf_{|t|\leq BX,q\leq B}\D(h,\chi(n)n^{it};1,X)\leq B$.
Recall that if $f\in \Mfgs$, then $f$ is strongly aperiodic, so 
 $\inf_{|t|\leq BX,q\leq B}\D(h,\chi(n)n^{it};1,X)\to + \infty$ as $X\to \infty$,
 and therefore, 
 $$\limsup_{X\to \infty} 
 \left|\mathlarger{\logE}_{n\leq X} 
     f\left(a_0Q^2n+a_0r_Q+b_0\right)\overline{f\left(vQ^2n+vr_Q+d_0\right)}
     \right| \leq  \frac{\delta^2}{4\sigma(\M)},$$
which in particular implies that 
\begin{equation}\label{eq dom conv 3}
    \liminf_{X\to \infty} \Re\left(f(v) \overline{f(a_0)}
 \mathlarger{\logE}_{n\leq X} 
     f\left(a_0Q^2n+a_0r_Q+b_0\right)\overline{f\left(vQ^2n+vr_Q+d_0\right)}\right)
     \geq  - \frac{\delta^2}{4\sigma(\M)}.
\end{equation}
Combining Fatou's lemma with \eqref{eq dom conv 3} we get that 
\begin{equation}\label{eq dom conv 4}
    \liminf_{X\to \infty} \int_{\Mfgs} 
    \Re\left(f(v) \overline{f(a_0)}
 \mathlarger{\logE}_{n\leq X} 
     f\left(a_0Q^2n+a_0r_Q+b_0\right)\overline{f\left(vQ^2n+vr_Q+d_0\right)}\right)
     d\sigma(f) \geq - \frac{\delta^2}{4}.
\end{equation}
Then, for the $Q$ we chose above, using \eqref{eqn_rec_fg_conclusion _2},
\eqref{eq ld} with $\lim_{X\to \infty}$ instead of $\limsup_{X\to \infty}$
(which we can do as commented above), and  
\eqref{eq dom conv 4} we get that 
\begin{equation}\label{eq ld 2}
\liminf_{X \to \infty}\: \mathlarger{\logE}_{n\leq X} 
\int_{\Mfg} 
f(v) \overline{f(a_0)}
f(a_0Q^2 n+ a_0 r_Q+b_0)\overline{f(vQ^2 n+vr_Q +d_0)}~d\sigma(f) 
\geq \frac{\delta^2}{2}.
\end{equation}
Consider the set 
$$\A=\left\{ n\in \N \colon 
\int_{\Mfg} f(an+b)\overline{f(an+d)}~d\sigma(f) \geq 
\frac{\delta^2}{2}\right\}. $$ 
Then from \eqref{eq ld 2} and \eqref{E: relation between averages along progressions and normal averages} we have that 
\begin{equation*}
\liminf_{X \to \infty} \mathlarger{\logE}_{n\leq Q^2 X+r_Q}\1_{\A}(n) > 0, 
\end{equation*}
so as in the last step of the proof of Proposition 
\ref{P: Main proposition for f-f} we get that 
$$\liminf_{Y\to \infty} \mathlarger{\logE}_{n\leq Y}
\1_{\A} (n)>0,$$
so $\A$ has indeed positive lower 
logarithmic density. This concludes the proof.
\end{proof}

\begin{remark}\label{new_remark}
Utilizing an almost identical proof, one can show that for each 
$\ell \in \N$, the set 
$\{\ell n/(n+1) \colon n\in \N \}$
is a set of recurrence for finitely generated multiplicative systems. 
For $f\in \Mfgp\setminus \{1\}$, there is no 
$t\in\R$ such that $f$ is identically equal to $n^{it}$,
so using Proposition \ref{P: binary correlations proposition} 
with $a_1=a_2=b_2=1$ and $b_1=0$, we 
can find $r_Q$'s so that 
\begin{equation}\label{remark_eq_1}
    \lim\limits_{K\to \infty}\  \limsup\limits_{X\to \infty}\     \left|\mathlarger{\cesE}_{Q\in \Phi_K} \mathlarger{\logE}_{n\leq X} f(Q^2n+ r_Q)\overline{f(Q^2n+r_Q+1)}   \right|=0.
\end{equation}
On the other hand, applying Theorem \ref{T: Tao theorem} we get that for 
each $f\in \Mfgs$,
\begin{equation}\label{remark_eq_2}
    \liminf_{X\to \infty} \Re\left(f(\ell)
 \mathlarger{\logE}_{n\leq X} 
f\left(Q^2n+r_Q\right)\overline{f\left(Q^2n+r_Q+1\right)}\right)
     \geq  - \frac{\delta^2}{4\sigma(\M)}.
\end{equation}
Following the exact same proof as the one used for Theorem 
\ref{thm_rec_fg_reformulation}, and replacing 
\eqref{eqn_rec_fg_conclusion _3} by \eqref{remark_eq_1},
and \eqref{eq dom conv 3} by \eqref{remark_eq_2}, we obtain that
if $\delta>0$ and $\sigma$ is a finite Borel measure on $\Mfg$ such that 
$\sigma(\{1\})\geq \delta^2$ and 
$\int_{\Mfg} f(r)\overline{f(s)}~d\sigma(f) \geq 0$ for every $r,s\in\N$,
then 
\begin{equation}\label{remark_eq_4}
        \int_{\Mfg} f(\ell n)\overline{f(n+1)}~d\sigma(f) \geq 
        \frac{\delta^2}{2}
    \end{equation}
for a set of $n\in\N$ with positive lower density. As explained in the 
beginning of Section \ref{section_mult-rec-for-fg-systems}, this 
implies that the set 
$\{\ell n/(n+1) \colon n\in \N \}$
is a set of recurrence for finitely generated multiplicative systems. 
\end{remark}


\begin{appendix}\label{appendix_a}
    
\section{Strong aperiodicity for a class of multiplicative functions}\label{appendix}

In this section, we establish that any finitely generated multiplicative function is either pretentious or strongly aperiodic. This will be a simple corollary of the following proposition.

\begin{proposition}\label{P: strong aperiodicity for almost non-dense functions}
    Let $B>0$, $f\in \M$ and $\chi$ be a fixed Dirichlet character of conductor $q$. 
    Assume that there exists an open subset $U$ of the unit circle 
    $\mathbb{S}^1$, such that
    \begin{equation}\label{E: f(p) is essentially not in U}
      \sum_{\substack{p\leq X \\ f(p)\in U}}\frac{1}{p}=o_{X\to \infty}(\log\log X).
    \end{equation}
    Then, we have \begin{equation*}
  \inf_{t\leq BX} \D(f,\chi(n)n^{it};1,X)\gg_{q,B,U} 1+\D(f,\chi;1,X).
    \end{equation*}
\end{proposition}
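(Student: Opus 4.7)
The plan is to decompose the target bound into the contribution of the pretentious triangle inequality and a direct positivity estimate coming from the hypothesis on $U$. First, by the triangle inequality for the pretentious distance,
$$\D(f,\chi(n)n^{it};1,X) \geq \D(f,\chi;1,X) - \D(\chi,\chi(n)n^{it};1,X) = \D(f,\chi;1,X) - \D(1,n^{it};1,X),$$
so whenever $\D(1,n^{it};1,X) \leq \tfrac{1}{2}\D(f,\chi;1,X)$ we already recover $\D(f,\chi(n)n^{it};1,X)\geq \tfrac{1}{2}\D(f,\chi;1,X)$, which yields the $\D(f,\chi;1,X)$-factor of the stated bound. In the complementary regime, and in order to obtain the unconditional ``$+1$'' term, the aim is to prove a direct pointwise lower bound $\D(f,\chi(n)n^{it};1,X)^2 \gg_{q,B,U} 1$.

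For that pointwise bound, write $V:=\mathbb{S}^1\setminus U$; by hypothesis, $f(p)\in V$ for all but an $o(\log\log X)$-mass of primes $p\leq X$. For each residue class $a\in(\Z/q\Z)^{\times}$ set $V_a:=V\overline{\chi(a)}$ and $U_a:=U\overline{\chi(a)}$, and choose once and for all open sets $J_a\Subset U_a$ satisfying $d(J_a,V_a)\geq\delta_0$ for some $\delta_0=\delta_0(U)>0$. For any prime $p\equiv a\pmod q$ with $f(p)\in V$ and $p^{it}\in J_a$, the point $f(p)\overline{\chi(p)}$ lies in $V_a$ while $p^{it}\in J_a$, so
$$1-\Re\bigl(f(p)\overline{\chi(p)}p^{-it}\bigr) = \tfrac{1}{2}\bigl|f(p)\overline{\chi(p)}-p^{it}\bigr|^2 \geq \tfrac{\delta_0^2}{2}.$$
Summing over such primes and absorbing the $o(\log\log X)$ loss from the set where $f(p)\in U$, matters reduce to the following \emph{equidistribution subclaim}: for every $t$ with $|t|\leq BX$ there is some $a\in(\Z/q\Z)^{\times}$ with
$$\sum_{\substack{p\leq X,\,p\equiv a\pmod q\\ p^{it}\in J_a}} \frac{1}{p} \gg_{q,B,U} 1.$$

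The subclaim is established by splitting on the size of $|t|$. In the bulk range, one invokes Vinogradov-type estimates on prime exponential sums $\sum_{p\leq X,\,p\equiv a\pmod q} p^{it}/p$ (uniform in $a$ via Siegel--Walfisz in progressions) to deduce equidistribution of $p^{it}$ on $\mathbb{S}^1$; approximating $\mathbf{1}_{J_a}$ by trigonometric polynomials then extracts mass $\gg_{q,B,U} 1$. For very small $|t|$ the quantity $p^{it}$ is uniformly close to $1$ for $p\leq X$, so one selects an $a$ with $1\in J_a$, which is possible as soon as the range of $\chi$ meets $U$; in the remaining degenerate case the first-paragraph triangle-inequality step already suffices. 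The principal obstacle is precisely this equidistribution subclaim: obtaining uniformity in $|t|$ across the full range $[0,BX]$ and cleanly patching the small-$|t|$ and bulk-$|t|$ analyses, with all implicit constants depending only on $q$, $B$, and $U$.
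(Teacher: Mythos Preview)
Your overall strategy---triangle inequality for one regime, direct lower bound via equidistribution for the other---matches the paper's. However, the plan as stated has a genuine quantitative gap.

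The issue is the target of your equidistribution subclaim. You ask for
\[
\sum_{\substack{p\le X,\ p\equiv a\ (q)\\ p^{it}\in J_a}}\frac{1}{p}\ \gg_{q,B,U}\ 1,
\]
but immediately before that you say you must ``absorb the $o(\log\log X)$ loss'' from the primes with $f(p)\in U$. Those two statements are incompatible: subtracting a quantity of size $o(\log\log X)$ from a quantity that is merely $\gg 1$ leaves something eventually negative. In the bulk range the correct target is $\gg \log\log X$, which is what equidistribution actually delivers; this yields $\D(f,\chi(n)n^{it};1,X)^2\gg\log\log X$, and then the trivial bound $\D(f,\chi;1,X)\ll\sqrt{\log\log X}$ gives $\D\gg\D(f,\chi)$. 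This last step is also needed to close your dichotomy: in your ``complementary regime'' you only claim $\D\gg 1$, but you still owe the factor $\D(f,\chi)$ there, and by construction the triangle inequality gives nothing in that regime.

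For comparison, the paper dichotomizes on $|t|\le(\log X)^{-1}$ versus $|t|>(\log X)^{-1}$. For small $|t|$ it shows directly that $\D(1,n^{it};1,X)=O(1)$, so the triangle inequality yields $\D\ge\D(f,\chi)-O(1)$. For large $|t|$ it Fourier-expands a smooth minorant $F(\chi(p)p^{it})$ and bounds $\sum_{Y\le p\le X}\chi^\ell(p)p^{-1-i\ell t}=O_q(1)$ for each $\ell\ne 0$ via the Vinogradov--Korobov zero-free region for $L(s,\chi^\ell)$, with $Y=\exp\bigl((\log X)^C\bigr)$ for some $2/3<C<1$; the $\ell=0$ term then contributes $\gg\log\log X$. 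Your residue-class decomposition is equivalent after expanding $\mathbf{1}_{p\equiv a}$ in characters mod $q$, but working with $\chi(p)p^{it}$ directly avoids having to select a particular residue class and eliminates your separate ``degenerate'' small-$|t|$ subcase.
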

This implies that if the multiplicative function $f$ satisfies \eqref{E: f(p) is essentially not in U}, then for the infimum over $t$ to be small, we must have that the distance of $f$ from $\chi$ is small.

We recall several standard facts about Dirichlet $L$-functions. We adopt the usual conventions and write $s=\sigma+it$ for a complex number.
Let $\chi$ be a Dirichlet character and throughout this section we assume all characters are primitive. The Dirichlet $L$-function $L(s,\chi):=\sum_{n\geq1}\chi(n)n^{-s}$, $\Re(s)>1$
has Euler product factorization\begin{equation*}
    L(s,\chi)=\prod_{p\in \P} \left(1-\frac{\chi(p)}{p^s}\right)^{-1}
\end{equation*}
valid on the half-plane $\Re(s)>1$. It possesses a meromorphic continuation to the entire complex plane and if $\chi$ is principal, then it has only one pole of order $1$ at $s=1$, otherwise, it is entire.

By the Vinogradov-Korobov zero free region for Dirichlet $L$-functions (see \cite[p. 176]{montgomery_ten_lectures} without proof, or \cite[Lemma 1]{languasco2007note}), letting $q$ be the modulus of $\chi$, we have that $L(\sigma+it,\chi)\neq 0$ in the region
\begin{equation}\label{zero-free_region_Dirichlet}
    \left\{\sigma+it\colon \sigma> 1 - \frac{C}{\log{q} + (\log|t|)^{2/3}(\log\log|t|)^\frac{1}{3}}\right\}
\end{equation}
for all $|t|\geq 3$ and for some absolute constant $C>0$.
Using standard methods (see for instance \cite[pp. 56-60]{titchmarsh_book} for Landau's method) this zero-free region yields the bound
\begin{equation}\label{E: Vinogradov-Korobov bound_Dirichlet}
    \frac{L'(\sigma+it,\chi)}{L(\sigma+it,\chi)}
    \ll \log{q} + (\log|t|)^{2/3}(\log\log|t|)^{1/3}.
\end{equation}
in the same region.
Since $L(s,\chi)$ does not vanish on this set, we can define a complex logarithm that is holomorphic on the zero-free region. When $\sigma>1$, we have that the identity \begin{equation*}
    \log L(s,\chi)=-\sum_{p\in \P}\log \left(1-\chi(p)p^{-s}\right),
\end{equation*}holds, since this is valid for all real $s$ with $s>1$.

Now, if $\chi$ is principal, then we can use Taylor expansion to get that for $|t|\leq 1/100$ (say) and $\sigma$ close to $1$, \begin{equation}\label{E: asymptotic for log derivative near 1}
    -\frac{L'}{L}(s,\chi)=\frac{C_{1,\chi}}{s-1}+C_{2,\chi}\gamma+\Oh(|s-1|)
\end{equation}
where $C_{1,\chi},C_{2,\chi}$ are some constants depending on $\chi$.

On the other hand, if $\chi$ is non-principal, then the logarithmic derivative $L'/L(s,\chi)$ does not have poles in this region, which follows from the fact that this is a zero-free region for $L(s,\chi)$ and $L(s,\chi)$ is pole-free. Then, by continuity, for $|t|\ll 1$ and $\sigma$ close to $1$, we have
\begin{equation}\label{bound for L'/L close to 1}
    \frac{L'}{L}(s,\chi) \ll 1.
\end{equation}
Finally, for any non-principal Dirichlet character $\chi$ modulo $q$, we have the classic bound (see \cite[Theorem 8.18]{tenenbaum_2015}) asserting that for $\sigma\geq1$,
\begin{equation}\label{bound_for_L}
    L(s,\chi) \ll \log(q|t|).
\end{equation}

The main inputs in our method are the following bounds for the sums $\sum_{Y\leq p\leq X}\chi(p)p^{-1-ia}$ where $|a|$ can be as large as $X$.

\begin{lemma}\label{L: asymptotics for sum over p to the 1+ia with characters}
    Let $X\geq 10$ and set $Y=\exp\left((\log{X})^C\right)$ for some constant $ 2/3<C<1$. Let $\chi$ be a Dirichlet character of conductor $q\geq1$.
    \begin{itemize}
        \item[a)] Uniformly for all $(\log X)^{-1}\leq |a|\leq X$, we have \begin{equation*}
            \sum_{Y\leq p\leq X} \frac{\chi(p)}{p^{1+ia}}\ll_{q}  1.
            \end{equation*}
        \item[b)] Uniformly for all $|a|\leq (\log X)^{-1}$, we have \begin{equation*}
        \begin{cases}
            \displaystyle\sum_{p\leq X} \frac{1-\Re(\chi(p)p^{-ia})}{p} \ll_{q} 1, & \text{if}~\chi~\text{is principal}, \\
            \displaystyle\sum_{p\leq X} \frac{\Re(\chi(p)p^{-ia})}{p} \ll_q 1, & \text{otherwise}.
        \end{cases}
        \end{equation*}
    \end{itemize}
\end{lemma}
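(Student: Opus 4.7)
My strategy is to relate the truncated prime sums to values of $\log L(s,\chi)$ at $s=1+ia$, and then exploit the Vinogradov--Korobov zero-free region \eqref{zero-free_region_Dirichlet} to control the error. The starting point is the Euler product identity
\[
\sum_{p}\frac{\chi(p)}{p^{s}} = \log L(s,\chi) + O(1) \qquad (\sigma>1),
\]
where the $O(1)$ absorbs the absolutely convergent prime-power tails. A standard Perron-plus-contour-shift argument, using \eqref{E: Vinogradov-Korobov bound_Dirichlet} to estimate $L'/L$ along the shifted contour inside the zero-free region, should yield the quantitative truncation
\[
\sum_{p\le Y'}\frac{\chi(p)}{p^{1+ia}} \;=\; \log L(1+ia,\chi) \;+\; O_q\!\bigl(\exp(-c(\log Y')^{3/5-\varepsilon})\bigr),
\]
uniformly for $|a|\le Y'$. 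For part (a) I then apply this at $Y'=X$ and at $Y'=Y$ and subtract: the main term $\log L(1+ia,\chi)$ (which may be as large as $\log\log X$ for $\chi$ principal near the pole, but is the same in both truncations) cancels exactly, leaving
\[
\sum_{Y\le p\le X}\frac{\chi(p)}{p^{1+ia}} \;=\; O_q\!\bigl(\exp(-c(\log Y)^{3/5-\varepsilon})\bigr) \;=\; O_q(1).
\]

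\textbf{Part (b).} Here I split into cases. If $\chi=\chi_0$ is principal modulo $q$, then $\chi_0(p)=0$ for $p\mid q$ and $\chi_0(p)=1$ otherwise, so
\[
\sum_{p\le X}\frac{1-\Re(\chi_0(p)p^{-ia})}{p} \;=\; \sum_{p\mid q}\frac{1}{p} \;+\; \sum_{\substack{p\le X\\ p\nmid q}}\frac{1-\cos(a\log p)}{p}.
\]
The first sum is $O_q(1)$ trivially. For the second, the hypothesis $|a|\le (\log X)^{-1}$ gives $|a\log p|\le 1$ for every $p\le X$, so $1-\cos(a\log p)\le (a\log p)^2/2$, and the elementary estimate $\sum_{p\le X}(\log p)^2/p\ll (\log X)^2$ produces a contribution $\ll a^2(\log X)^2\le 1$. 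For non-principal $\chi$, the bound \eqref{bound for L'/L close to 1} and non-vanishing of $L$ at $s=1$ imply that $\log L(s,\chi)$ is holomorphic and bounded by $O_q(1)$ in a neighborhood of $s=1$; combining this with the truncation estimate from the first paragraph yields $\sum_{p\le X}\chi(p)/p^{1+ia}=O_q(1)$, and taking real parts finishes the case.

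\textbf{Main obstacle.} The most delicate ingredient is the truncation estimate above: the Perron contour must be shifted into a strip whose depth $\eta\sim(\log(qX))^{-2/3}(\log\log(qX))^{-1/3}$ depends on $|a|$, and the corresponding Perron error $Y'^{1-\eta}$ must be beaten by the savings at scale $Y$. A short calculation shows $\eta\log Y\sim(\log X)^{C-2/3}$, so the restriction $C>2/3$ in the lemma is precisely what makes this quantity tend to infinity and produces the required bound. Handling the intermediate regime $(\log X)^{-1}\le|a|\le(\log Y)^{-1}$ in part (a), where $|a|\log p$ transitions from small (for $p$ near $Y$) to order one (for $p$ near $X$), is especially sensitive and requires that the $|a|$-dependent bound \eqref{E: Vinogradov-Korobov bound_Dirichlet} be applied with care throughout the contour integration.
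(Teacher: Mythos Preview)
Your approach to part (b) is essentially the same as the paper's and is fine. The problem is in part (a): the truncation formula you propose,
\[
\sum_{p\le Y'}\frac{\chi(p)}{p^{1+ia}} = \log L(1+ia,\chi) + O_q\bigl(\exp(-c(\log Y')^{3/5-\varepsilon})\bigr),
\]
is simply false in the regime you most need it. Take $\chi$ principal and $|a|=(\log X)^{-1}$, and apply it at $Y'=Y=\exp((\log X)^C)$. Then $|a|\log p\le (\log X)^{C-1}\to 0$ for every $p\le Y$, so the left side equals $\log\log Y+O(1)=C\log\log X+O(1)$. Meanwhile $\log L(1+ia,\chi_0)=\log(1/|a|)+O(1)=\log\log X+O(1)$. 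The discrepancy is $(1-C)\log\log X$, not $o(1)$. The same counterexample shows your subtraction argument cannot possibly give $O_q(1)$: the ``main term'' $\log L(1+ia,\chi)$ is \emph{not} the same in both truncations when $|a|\lesssim(\log Y)^{-1}$, because the prime sum up to $Y$ has not yet felt the oscillation of $p^{-ia}$. A genuine Perron-plus-contour-shift in this regime would have to handle the logarithmic singularity of $\log L(s,\chi_0)$ at $s=1$ sitting only $|a|$ away from the pole of the Perron kernel at $s=0$, and this produces an extra term you have not accounted for.

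The paper avoids all of this by never leaving the half-plane $\sigma>1$. It regularizes each truncated sum by moving to $\sigma=1+1/\log Y'$, writes the difference of the two values of $\log L$ as the integral
\[
\int_Y^X \frac{L'}{L}\Bigl(1+\tfrac{1}{\log u}+ia,\chi\Bigr)\frac{du}{u(\log u)^2},
\]
and then bounds this integral case by case. For $|a|\ge 3$ the Vinogradov--Korobov bound \eqref{E: Vinogradov-Korobov bound_Dirichlet} on $L'/L$ suffices directly (this is where $C>2/3$ is used, as you correctly identify). For $\chi$ principal and small $|a|$ the Laurent expansion $-L'/L(s,\chi)=C_\chi/(s-1)+O(1)$ reduces the problem to the elementary real integral $\int_Y^X \frac{du}{(1+ia\log u)u\log u}$, which is then computed by hand separately in the ranges $(\log Y)^{-1}\le|a|\le 1/100$ and $(\log X)^{-1}\le|a|\le(\log Y)^{-1}$. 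This direct computation is precisely what replaces the missing contour argument in the delicate regime you flagged but did not resolve.
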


\begin{proof}
All asymptotic constants are allowed to depend on $\chi$, or equivalently to $q$ (since there are finitely many Dirichlet characters of conductor $q$). First of all, we observe that \begin{equation*}
        \sum_{p\leq X}\frac{\chi(p)}{p^{1+ia}}=\sum_{p\leq X} \frac{\chi(p)}{p^{1+\frac{1}{\log X}+ia}} + \Oh(1)
    \end{equation*}Thus, for the given $X$ and $Y$, we have \begin{equation*}
        \sum_{Y\leq p\leq X}\frac{\chi(p)}{p^{1+ia}}=\sum_{Y\leq p\leq X}\frac{\chi(p)}{p^{1+\frac{1}{\log X}+ia}}-\sum_{p\leq Y}\left(\frac{\chi(p)}{p^{1+\frac{1}{\log X}+ia}}-\frac{\chi(p)}{p^{1+\frac{1}{\log Y}+ia}}\right)+\Oh(1).
    \end{equation*}
    Using the mean value theorem, we easily deduce that each term is the second sum is $\Oh\left(\log p(\log Y)^{-3}\right)$ and thus the whole sum is $\Oh\left((\log Y)^{-2}\right)$ by Mertens' theorem. Taking logarithms in the Euler product of $L(s,\chi)$ (using the branch defined above) and then using Taylor expansion, we deduce that 
    \begin{align*}
        \log\left(L\left(1+\frac{1}{\log X}+ia,\chi\right)\right)
        = -\sum_{p\in \P}\log \left(1-\frac{\chi(p)}{p^{1+\frac{1}{\log X} +ia}}\right) 
        & = \sum_{p\in \P} \left(\frac{\chi(p)}{p^{1+\frac{1}{\log X} +ia}}+\Oh\left(\frac{1}{p^2}\right)\right) \\
        & = \sum_{p\in \P} \frac{\chi(p)}{p^{1+\frac{1}{\log X} +ia}} +\Oh(1).
    \end{align*}
    Finally, we have that 
    \begin{equation*}
        \sum_{p\geq X} \frac{\chi(p)}{p^{1+\frac{1}{\log X} +ia}} \ll 1
    \end{equation*}
    which can be obtained by splitting into dyadic intervals and summing the inequalities $$\left|\sum_{2^k\leq p<2^{k+1}}\frac{\chi(p)}{p^{1+\frac{1}{\log X} +ia}}\right|\leq \frac{1}{2^k}\sum_{2^k\leq p< 2^{k+1}}\frac{1}{X^{\frac{1}{\log X}}}\ll \frac{1}{2^k}\log\left(1+\frac{1}{k}\right), $$
    by Mertens' theorem.  
    In conclusion, we arrive at the formulas 
    \begin{equation}\label{sum_p_as_log_of_L}
         \log\left(L\left(1+\frac{1}{\log X}+ia,\chi\right)\right)=\sum_{p\leq X}\frac{\chi(p)}{p^{1+ia}}+\Oh(1)
    \end{equation}
    and 
    \begin{equation*}
         \log\left(L\left(1+\frac{1}{\log X}+ia,\chi\right)\right)- \log\left(L\left(1+\frac{1}{\log Y}+ia,\chi\right)\right)=\sum_{Y\leq p\leq X}\frac{\chi(p)}{p^{1 +ia}}+\Oh(1).
    \end{equation*}
    Therefore, we can rewrite our sum as \begin{equation}\label{sum_p_as_integral}
        \left|\sum_{Y\leq p\leq X}\frac{\chi(p)}{p^{1 +ia}}\right|
        = \left|\int_{Y}^X \frac{L'}{L}\left(1+\frac{1}{\log u}+ia,\chi\right)\frac{du}{u(\log u)^2} \right|+\Oh(1).
    \end{equation}

a) We prove the first assertion of the lemma.    Firstly, assume that $3\leq |a|\leq X$. Then, we use \eqref{sum_p_as_integral} and \eqref{E: Vinogradov-Korobov bound_Dirichlet} to deduce that \begin{align*}
    \left|\sum_{Y\leq p\leq X}\frac{\chi(p)}{p^{1+ia}}\right| 
    & \ll \int_{Y}^X \left(\log{q}+(\log|a|)^{2/3}(\log\log|a|)^{1/3}\right)\frac{du}{u(\log u)^2} + 1 \\
    & \ll (\log X)^{2/3}(\log\log X)^{1/3}\left(\frac{1}{\log Y}-\frac{1}{\log X}\right).
    \end{align*}
    By our assumptions, we have $\log Y=(\log X)^C$, which implies the desired bound.

   Now, suppose that $(\log X)^{-1}\leq |a|\leq 3$.
    If $\chi$ is non-principal, we use \eqref{sum_p_as_integral} and \eqref{bound for L'/L close to 1} to infer that
    $$\left|\sum_{Y\leq p\leq X}\frac{\chi(p)}{p^{1+ia}}\right| \ll \int_Y^X \frac{du}{u(\log u)^2} + 1
    = \frac{1}{\log{Y}} - \frac{1}{\log{X}} + 1
    \ll 1.$$

    If $\chi$ is principal,     
    observe that if $1/100\leq |a|\leq 3$, then we have $\left|\frac{L'}{L}\left(1+\frac{1}{\log u}+ia,\chi\right)\right|\asymp 1$. Thus, the result follows as before.
We now work in the range $(\log Y)^{-1}\leq |a|\leq \frac{1}{100} $. 
    We use \eqref{E: asymptotic for log derivative near 1} to write $$-\frac{L'}{L}(s,\chi)= \frac{C_{\chi}}{s-1}+\Oh(1).$$
    for some constant $C_{\chi}$ depending only on $\chi$. Thus,
    \begin{equation*}
    \sum_{Y\leq p\leq X}\frac{\chi(p)}{p^{1 +ia}}= C_{\chi}\int_{Y}^X \frac{du}{(\frac{1}{\log u}+ia)u(\log u)^2 } +\Oh\left(\int_{Y}^X \frac{du}{u\log^2 u} \right)+\Oh(1)
    \end{equation*}
    The integral in the second term is $\Oh((\log Y)^{-1})$ and therefore we can rewrite the last equality as \begin{equation*}
    \sum_{Y\leq p\leq X}\frac{\chi(p)}{p^{1 +ia}}= \int_{Y}^X \frac{du}{({1}+ia\log u )u\log u } +\Oh(1)
    \end{equation*}
    We rewrite the previous integral as \begin{equation*}
    \int_{Y}^X \frac{1-ia\log u}{\left(1+a^2(\log u)^2\right)u\log u}du=\int_{Y}^X \frac{du}{\left(1+a^2(\log u)^2)\right)u\log u}-ia\int_{Y}^X \frac{du}{\left(1+a^2(\log u)^2 \right)u}.  
    \end{equation*}
    It suffices to show that each integral is $\Oh(1)$.
    Using the inequality $|a|\log Y\geq 1$, we get that 
    \begin{equation*}
    \left|\int_{Y}^X \frac{du}{\left(1+a^2(\log u)^2)\right)u\log u}\right|\leq \int \frac{du}{2u\log u}=\frac{1}{2\log Y}-\frac{1}{2\log X} \ll 1.
    \end{equation*}
    For the second integral, we use the inequality $1+a^2(\log u)^2\geq 2a\log u$ to deduce that 
    \begin{equation*}
        \int_{Y}^X \frac{a}{\left(1+a^2(\log u)^2 \right)u}\, du
        \leq \int_{Y}^{X}\frac{du}{2au(\log u)^2}=\frac{1}{2a}\left(\frac{1}{\log X}-\frac{1}{\log Y}\right) \ll 1,
    \end{equation*}
    since $|a|\log X\geq |a|\log Y\geq 1$.

    Finally, assume that $(\log X)^{-1}\leq |a|\leq (\log Y)^{-1} $. We repeat the same argument as above and arrive at the equality\begin{equation*}
    \sum_{Y\leq p\leq X}\frac{1}{p^{1 +ia}}=\int_{Y}^X \frac{du}{\left(1+a^2(\log u)^2)\right)u\log u}-ia\int_{Y}^X \frac{du}{\left(1+a^2(\log u)^2 \right)u}+\Oh(1).  
    \end{equation*}
    Here, we calculate each integral.
    For the first integral, we have 
    \begin{align*}
    & \int_{Y}^X \frac{du}{\left(1+a^2(\log u)^2)\right)u\log u}
    = \int_{Y}^X \frac{(|a|\log u)'}{(1+a^2(\log u)^2)|a|\log u}du
    = \int_{|a|\log Y}^{|a|\log X} \frac{dt}{t(1+t^2)} \\
    & = \int_{|a|\log Y}^{|a|\log X}\left(\log t-\frac{1}{2}\log(t^2+1)\right)'dt \\
    & = \log(|a|\log X)-\log(|a|\log Y)-
    \frac{1}{2}\left(\log(1+a^2\log^2X)-\log(1+a^2\log^2 Y)\right) \\
    & = \log(|a|\log X)-\log \sqrt{1+a^2(\log X)^2}+\Oh(1),
    \end{align*}
    where we used that $|a|\log Y$ is bounded in the last line. We conclude that 
    \begin{equation*}
    \int_{Y}^X \frac{du}{\left(1+a^2(\log u)^2)\right)u\log u}=-\frac{1}{2}\log\left(1+\frac{1}{|a|\log X}\right)+\Oh(1).
    \end{equation*}
    Since $|a|\log X\geq 1$, we deduce that the last quantity is $\Oh(1)$.
    For the second integral, we have \begin{equation*}
    a\int_{Y}^X \frac{du}{\left(1+a^2(\log u)^2 \right)u}=\int \frac{(a\log u)'}{(1+a^2(\log u)^2) }du=\int_{a\log Y}^{a\log X} \frac{dt}{1+t^2} \leq \int_{-\infty}^{+\infty} \frac{dt}{1+t^2}
    \ll 1.
    \end{equation*} 
    We conclude that the second integral is bounded.

    Combining everything together, we conclude that \begin{equation*}
        \sum_{Y\leq p\leq X}\frac{\chi(p)}{p^{1 +ia}} \ll 1
    \end{equation*}
    for principal $\chi$. This concludes the proof of a).
    
   b) Now we prove the second part, where we always assume that $|a|\leq (\log X)^{-1}$. 
    
    Suppose first that $\chi$ is principal. Note that
    $$\sum_{p\leq X}\frac{\chi(p)}{p^{1+ia}} = \sum_{p\leq X}\frac{1}{p^{1+ia}} + \Oh\left(\sum_{p\mid q}\frac{1}{p}\right)
    = \sum_{p\leq X}\frac{1}{p^{1+ia}} + \Oh(\log\log(\omega(q))).$$
    Hence it suffices to show that \begin{equation*}
    \sum_{p\leq X}\frac{1-\cos(a\log p)}{p}\ll 1.
    \end{equation*}
    By Taylor expansion, we have that \begin{equation*}
    \cos(a\log p)=1+\Oh(|a\log p|^2)
    \end{equation*}Thus, the sum in question is bounded by $\Oh(1)$ times \begin{equation*}
    |a^2|\sum_{p\leq X} \frac{(\log p)^2}{p} 
    \end{equation*}
    By Mertens' theorem and partial summation, we have that \begin{equation*}
    \sum_{p\leq X} \frac{(\log p)^2}{p} \ll (\log X)^2.
    \end{equation*}
    Thus, the original sum is $\Oh(|a|^2(\log X)^2)=\Oh(1)$, as desired.

    Finally, suppose that $\chi$ is non-principal. Then by \eqref{sum_p_as_log_of_L} and \eqref{bound for L'/L close to 1} we have
    $$\sum_{p\leq X}\frac{\Re(\chi(p)p^{-ia})}{p} = \log\left|L\left(1+\frac{1}{\log{X}}+ia,\chi\right)\right| + \Oh(1)
    \ll \log\log(q|t|) + 1 \ll 1.$$
    This concludes the proof of b), and hence the proof of the lemma.
\end{proof}

\begin{proof}[Proof of Proposition \ref{P: strong aperiodicity for almost non-dense functions}]

We prove the proposition in the case $B=1$. The general case follows by noting that the contribution of primes from $X$ to $BX$ in the distance function is $O(\log B)$, which is acceptable.

Let $f$ be a function satisfying the assumptions of our main theorem. Since intervals generate the topology on $\mathbb{S}^{1}$, we easily see that the statement follows if we prove it in the case where $U$ is a closed interval.

We want to bound 
\begin{equation*}
\inf_{|t|\leq X}   \sum_{p\leq X} \frac{1-\Re(f(p)\overline{\chi(p)p^{it}})}{p} 
\end{equation*}from below.
We will consider two cases depending on the size of $t$ and we will invoke Lemma \ref{L: asymptotics for sum over p to the 1+ia with characters}.

First of all, assume that $|t|\leq (\log X)^{-1}$.
Using the triangle inequality, we have that \begin{equation*}
    \D(f(n),\chi(n)n^{it};1,X)\geq \D(f,\chi;1,X)-\D(1,n^{it};1,X)
\end{equation*}Using part b) of Lemma \ref{L: asymptotics for sum over p to the 1+ia with characters} for the trivial character, we conclude that $$\sup_{|t|\leq (\log X)^{-1}}\D(n^{it},1;1,X)\ll 1.$$ We deduce that \begin{equation*}
    \inf_{|t|\leq (\log X)^{-1}}  \D(f(n),\chi(n)n^{it};1,X)\geq  \D(f,\chi;1,X)+\Oh(1),
\end{equation*}so our claim follows.

Now, we assume that $(\log X)^{-1}\leq |t|\leq X$. We let $Y=\exp((\log X)^C)$ for some $2/3<C<1$.
Our assumption implies that there exists a closed interval $U$ such that 
\begin{equation*}
     \sum_{\substack{Y\leq p\leq X \\ f(p)\in U}}\frac{1}{p} = \oh_{X\to\infty}(\log\log{X}).
\end{equation*}
We let $V_{\e}$ be a sub-interval of $U^c$ obtained by removing two intervals of length $\e$ from its endpoints (where $\e$ is sufficiently small depending on $\m(U^c)$).

We consider a smooth function $F$ on the unit circle, such that $F(x)=0$ for all $x\notin V_{\e}$, $F(x)\leq \1_{V_\e}(x)$ for all $x\in \mathbb{S}^1$ and such that $\int F(t)\, dt> \m(V_\e)/2$. Then, we observe that 
\begin{align*}
    & \sum_{p\leq X}\frac{1-\Re(f(p)\overline{\chi(p)}p^{-it})}{p}
    = \frac{1}{2}\sum_{p\leq X} \frac{|f(p) - \chi(p)p^{it}|^2}{p} + \Oh(1) \\
    & \geq \frac{1}{2}\sum_{Y\leq p\leq X}\frac{|f(p)-\chi(p)p^{it}|^2}{p} + \Oh(1) \geq \frac{1}{2}\sum_{\substack{Y\leq p\leq X\\
    \chi(p)p^{it}\in V_\e}}\frac{|f(p)-\chi(p)p^{it}|^2}{p} + \Oh(1) \\
    & = \frac{1}{2} \sum_{{Y\leq p\leq X}}\frac{|f(p)-\chi(p)p^{it}|^2 \1_{V_\e}(\chi(p)p^{it})}{p}+\Oh(1) 
    \geq \frac{\e^2}{2}\sum_{{Y\leq p\leq X}}\frac{F\left(\chi(p)p^{it}\right)}{p}+\oh_{X\to\infty}(\log\log X).
\end{align*}
Here, we used the fact that $f(p)\in U$ for all primes outside a set of at most $o_{X\to \infty}(\log\log X)$ primes, and thus $|f(p)-\chi(p)p^{it}|>\e$ for all those primes.

Now, it suffices to show that \begin{equation}\label{E: sum that we want to be large}
    \sum_{{Y\leq p\leq X}}\frac{F(\chi(p)p^{it})}{p}\gg \log\log X
\end{equation}holds for all $|t|\geq (\log X)^{-1}$.
Then, the result would follow by combining the bounds from the two cases and noting that $\D(f,\chi;1,X)\ll \sqrt{
\log\log X}$.

Since $F$ is smooth, we know that \begin{equation}\label{E: approximation of smooth F by trigs}
    \sup_{z\in \mathbb{S}^1}\Big|F(z)-\sum_{-R\leq \ell\leq R}\widehat{F}(\ell)z^{\ell}\Big|=o_{R\to \infty}(1).
\end{equation}We pick $R$ to be some large, but finite integer and we let $X,Y$ be very large in terms of $R$.
Using this approximation and Mertens' theorem, we rewrite the sum in \eqref{E: sum that we want to be large} as\begin{equation}\label{E: sum after approximation}
   (1-C) \log\log X\int F(t)\, dt+\sum_{0<|\ell|\leq R}\widehat{F}(\ell)\left(\sum_{Y\leq p\leq X} \frac{\chi^{\ell}(p)p^{i\ell t}}{p}\right)+o_{R\to \infty}(1)\log\log X+\Oh(1).
\end{equation}
Recall that $|t|\geq (\log X)^{-1}$ and $\chi^{\ell}$ has conductor bounded by $q$. Then, Lemma \ref{L: asymptotics for sum over p to the 1+ia with characters} implies that \begin{equation*}
   \left| \sum_{Y\leq p\leq X} \frac{\chi(p)p^{i\ell t}}{p}\right|\ll 1
\end{equation*}for all $0<|\ell|\leq R$. Since $F$ is bounded, we conclude that the sum in \eqref{E: sum after approximation} is \begin{equation*}
    (1-C+o_R(1))\log\log X\int F(t)\, dt+\Oh(1).
\end{equation*} In particular, we conclude that \begin{multline*}
     \sum_{Y\leq p\leq X}\frac{1-\Re(f(p)p^{it})}{p}\gg \frac{\e^2}{2}(1-C+o_{R\to \infty}(1))\log\log X\int F(t)dt+o_{X\to \infty}(\log\log X)\\
     +\Oh(1),
\end{multline*}which gives the desired conclusion.
\end{proof}

We can now prove Proposition \ref{P: finitely generated non-pretentious is strongly aperiodic}.
\begin{proof}
    Assume that $f$ is a non-pretentious finitely generated function. We want to show that \begin{equation*}
      \lim_{X\to \infty}  \inf_{|t|\leq BX, q\leq B} \D(f(n),\chi(n)n^{it};1,X)=+\infty
    \end{equation*}for every $B>0$. Fixing a Dirichlet character $\chi$ of modulus at most $B$, we want to show that \begin{equation}\label{E: finitely generated times Dirichlet is strongly aperiodic}
         \lim_{X\to \infty}  \inf_{{|t|\leq BX}} \D(f(n)\overline{\chi(n)},n^{it};1,X)=+\infty
    \end{equation}
    By modifying our functions on the finitely many primes for which $\chi(p)=0$, we may assume that our function is in $\M$, while only perturbing the associated distances by an $\Oh(1)$ term.
    However, the function $f\cdot \overline{\chi}$ is finitely generated, since $f$ and $\chi$ are. Therefore, it satisfies the assumptions of Proposition \ref{P: strong aperiodicity for almost non-dense functions} for an interval that does not contain any of the finitely many values of $f(p)\chi(p)$. Therefore, we have that \eqref{E: finitely generated times Dirichlet is strongly aperiodic} holds and the result follows.
\end{proof}
\end{appendix}

\bibliographystyle{abbrv}
\bibliography{refsmultiplicative}

\bigskip
\bigskip
\footnotesize
\noindent
Dimitrios Charamaras\\
\textsc{École Polytechnique Fédérale de Lausanne (EPFL)} \par\nopagebreak
\noindent
\href{mailto:dimitrios.charamaras@epfl.ch}
{\texttt{dimitrios.charamaras@epfl.ch}}

\bigskip
\footnotesize
\noindent
Andreas Mountakis\\
\textsc{University of Crete} \par\nopagebreak
\noindent
\href{mailto:a.mountakis@uoc.gr}
{\texttt{a.mountakis@uoc.gr}}

\bigskip
\footnotesize
\noindent
Konstantinos Tsinas\\
\textsc{École Polytechnique Fédérale de Lausanne (EPFL)} \par\nopagebreak
\noindent
\href{mailto:konstantinos.tsinas@epfl.ch}
{\texttt{konstantinos.tsinas@epfl.ch}}

\end{document}